\newtheorem{theorem}{Theorem}[section]
\newtheorem{remark}{Remark}[section]
\newtheorem{lemma}[theorem]{Lemma}
\newtheorem{proposition}[theorem]{Proposition}
\newcommand{\n}{\rho}
\newcommand{\lm}{\lambda}
\newcommand{\ltwo}{_{L^2}^2}
\renewcommand{\div}{ {\rm div }  }
\def\on{\bar\rho}
\newcommand{\pa}{\partial}
\renewcommand{\r}{\mathbb{R}}
\newcommand{\ia}{\int_0^T}
\newcommand{\bt}{\begin{theorem}}
\newcommand{\bl}{\begin{lemma}}
\newcommand{\el}{\end{lemma}}
\newcommand{\et}{\end{theorem}}
\newcommand{\ga}{\gamma}
\newcommand{\curl}{{\rm curl} }
\newcommand{\de}{\delta}
\newcommand{\ve}{\varepsilon}
\newcommand{\ol}{\overline}
\newcommand{\bn}{\begin{eqnarray}}
\newcommand{\en}{\end{eqnarray}}
\newcommand{\bnn}{\begin{eqnarray*}}
\newcommand{\enn}{\end{eqnarray*}}
\newcommand{\bnnn}{\begin{eqnarray*}}
\newcommand{\ennn}{\end{eqnarray*}}
\newcommand{\ba}{\begin{aligned}}
\newcommand{\ea}{\end{aligned}}
\newcommand{\be}{\begin{equation}}
\newcommand{\ee}{\end{equation}}
\def\O{{\Omega }}
\def\norm[#1]#2{\|#2\|_{#1}}
\newcommand{\si}{\sigma}
\def\na{\nabla}
\def\on{\bar\n}
\def\QEDopen{{\setlength{\fboxsep}{0pt}\setlength{\fboxrule}{0.2pt}\fbox{\rule[0pt]{0pt}{1.3ex}\rule[0pt]{1.3ex}{0pt}}}} %???????
\def\QED{\QEDopen} % ??\QEDclosed?????
\def\endproof{\hspace*{\fill}~\QED\par\endtrivlist\unskip}% ?proof????????????????
\renewcommand{\thefootnote}{}
\newcommand\blfootnote[1]{%
  \begingroup
  \renewcommand\thefootnote{}\footnote{#1}%
  \addtocounter{footnote}{-1}%
  \endgroup
}
\title{Global Well-posedness of Classical Solutions to the Compressible Navier-Stokes-Poisson Equations with Slip Boundary Conditions in 3D Bounded Domains
}
\author{Yazhou C{\small HEN}, Bin H{\small UANG}%$^*$
, Xiaoding S{\small HI} \\
{\normalsize   College of Mathematics and Physics, }\\ {\normalsize  Beijing University of Chemical Technology, Beijing 100029, P. R. China} }
\date{ }
\begin{document}
\maketitle
%\blfootnote{*~Corresponding author. \\ { E-mail addresses: chenyz@mail.buct.edu.cn (Y. Chen), abinhuang@gmail.com (B. Huang), shixd@mail.buct.edu.cn (X. Shi)} }
\blfootnote{E-mail addresses: chenyz@mail.buct.edu.cn (Y. Chen), abinhuang@gmail.com (B. Huang), shixd@mail.buct.edu.cn (X. Shi)}
\begin{abstract}
We consider the initial-boundary-value problem of the isentropic compressible Navier-Stokes-Poisson equations subject to large and non-flat doping profile in 3D bounded domain with slip boundary condition and vacuum. The global well-posedness of classical solution is established with small initial energy but possibly large oscillations and vacuum. The steady state (except velocity) and the doping profile are allowed to be of large variation.
\end{abstract}

\textbf{Keywords:} Navier-Stokes-Poisson equations;  global classical solutions; slip boundary condition; large oscillations; non-flat doping profile.

\textbf{AMS Subject Classifications:} 35Q35, 35B40, 76N10

%MSC:35B65; 35M10; 35Q35;

\section{Introduction}
In this paper, we consider the compressible Navier-Stokes-Poisson (NSP) equations for the dynamics of charged particles of electrons (see \cite{MRS1990}) in a domain $\Omega\subset\r^{3}$, which can be written as
\begin{equation}\label{NSP}
\begin{cases}
\rho_t+ \mathop{\mathrm{div}}\nolimits(\rho u)=0,\\
(\rho u)_t+\mathop{\mathrm{div}}\nolimits(\rho u\otimes u)+\nabla P
=\mu \Delta u+(\mu+\lambda)\nabla \mathop{\mathrm{div}}\nolimits u +\rho\nabla\Phi,\\
\Delta \Phi=\rho-\tilde{\rho},
\end{cases}
\end{equation}
where $(x,t)\in\Omega\times (0,T]$, $t\geq 0$ is time, $x\in\r^{3}$ is the spatial coordinate. The unknown functions $\rho, u=(u^1,u^2,u^3), P=P(\rho)$ and $\Phi$ denote the electron density, the particle velocity, pressure and the electrostatic potential, respectively. Here we consider the isentropic flows with $\gamma$-law pressure $P(\rho)=a\rho^{\gamma}$, where $a>0$ and $\gamma >1$ are some physical parameters. The given function $\tilde{\rho}=\tilde{\rho}(x)>0$ is the doping profile, which describes the density of fixed, positively charged background ions. The constants $\mu$ and $\lambda$ are the shear viscosity and bulk coefficients respectively satisfying $\mu>0, 2\mu +3\lambda\geq 0.$
In addition, the system is solved subject to the given initial data
\begin{equation}\label{initial}
\displaystyle  \rho(x,0)=\rho_0(x), \quad \rho u(x,0)=\rho_0 u_0(x),\quad x\in \Omega,
\end{equation}
and boundary condition
\begin{align}
& u\cdot n=0,\,\,\,\curl u\times n=0, &\text{on} \,\,\,\partial\Omega, \label{navier-b}\\
& \nabla \Phi \cdot n=0,\,\,\,  &\text{on} \,\,\,\partial\Omega,\label{boundary}
\end{align}
where $n$ is the unit outer normal to $\partial \Omega$.
The boundary condition \eqref{navier-b} on the velocity is a special Navier-type slip boundary condition (see \cite{cl2019,xx2007}), in which there is a stagnant layer of fluid close to the wall allowing a fluid to slip, and the slip velocity is proportional to the shear stress.
This type of boundary condition was first introduced by Navier in \cite{Nclm1}, which was followed by great many applications, numerical studies and analysis for various fluid mechanical problems, see, for instance \cite{cf1988,Hoff2005,%Beirao2005,itt2003,,KZ1999
Zaja1998,xx2007,BZ1997} and the references therein.
For the electrostatic potential, we consider the Neumann boundary condition \eqref{boundary}, which describes that  the boundary of the domain is insulated (see\cite{GS2006}). It should be pointed out that
\begin{equation}
\displaystyle \int (\rho-\tilde{\rho}) dx=0.
\end{equation}
Furthermore, we observed that the solvability of the system \eqref{NSP}-\eqref{boundary} implies that the initial density $\rho_0$ and the doping profile $\tilde{\rho}$ must satisfy
\begin{equation}\label{d-d}
\displaystyle \int (\rho_0-\tilde{\rho}) dx=0.
\end{equation}
Moreover, when we consider the Poisson equation with the Neumann boundary condition in a bounded domain, we need the following uniqueness condition on $\Phi$,
\begin{equation}\label{phi-00}
\displaystyle \int\Phi dx=0.
\end{equation}
Next, we consider the steady state of the system \eqref{NSP} with the non-flat doping profile $\tilde{\rho}(x)$. Assume that $(\rho_s, u_s, \Phi_s)$ with $u_s \equiv 0$ is the stationary solution of \eqref{NSP}. Then it follows that
\begin{equation}\label{NSP-s}
\begin{cases}
\nabla P(\rho_s)=\rho_s\nabla\Phi_s\,\,\,  &\text{in} \,\,\,\Omega,\\
\Delta \Phi_s=\rho_s-\tilde{\rho}\,\,\,  &\text{in} \,\,\,\Omega,\\
\nabla\Phi_s \cdot n=0\,\,\,  &\text{on} \,\,\,\partial\Omega.
\end{cases}
\end{equation}
Let $Q(\rho)$ be the enthalhy function defined by
\begin{equation}\label{qrho}
 \displaystyle Q'(\rho)=P'(\rho)/\rho,%\frac{P'(\rho)}{\rho},
\end{equation}
which can change \eqref{NSP-s} to a quasilinear elliptic PDE and the classical fixed-point therem or a variational method can be used to prove the existence of stationary solution, without the smallness assumption on the oscillation of $\tilde{\rho}(x)$ (see \cite{GS2006,XJW2003}). We will record the existence and uniqueness of the solution to \eqref{NSP-s} in Lemma \ref{lem-s}.

The compressible Navier-Stokes-Poisson system \eqref{NSP} has been attracted a lot of attention and significant progress has been made in the analysis of the well-posedness and dynamic behavior to the solutions of the system.
We briefly review some results related to the global existence of strong (classical) solutions to Cauchy problem of the multi-dimensional compressible Navier-Stokes-Poisson system \eqref{NSP}-\eqref{initial}.
When the doping profile is flat, $i.e., \tilde{\rho}(x)\equiv \bar{\rho}>0$, the steady state of \eqref{NSP} is the trivial constant one $(\bar{\rho},0,0)$.
Global existence and the decay rates of the classical solution to its steady state were studied in \cite{HL2009,XL2010,LMZ2010,WwkW2010,Wang2012,Zheng2012,WW2015,LZ2012,BWY2017} in the Sobolev spaces or Besove spaces framework. This is extended to non-isentropic Navier-Stokes-Poisson systems, see \cite{TZ2013,ZL2012,ZLZ2011,TW2012,WW2012}.
While the doping profile is non-flat, global well-posedness of classical solutions and stability of the steady state were obtained in \cite{TWW2015,FL2018}, under analogous smallness conditions as the ones in \cite{LMZ2010,Wang2012}.
All of the above results need the smallness assumption on the oscillations between the initial data and the steady state. In particular, the initial density is near the non-vacuum steady density, which indicates that the density is uniformly away from the vacuum.
Recently, for Cauchy problem of the compressible Navier-Stokes-Poisson system \eqref{NSP}-\eqref{initial} subject to large and non-flat doping profile, global existence and uniqueness of strong solutions with large oscillations and vacuum was established in \cite{LXZ2020}, provided the initial data are of small energy and the steady state is strictly away from vacuum.

For the initial-boundary-value problem of the compressible Navier-Stokes-Poisson system \eqref{NSP}-\eqref{initial} with non-slip boundary condition for the velocity field, in the case that the doping profile is flat, the local and global existence of weak solution were obtained in \cite{Don2003,KS2008} and the local existence of unique strong solutions with vacuum was proved in \cite{TZ2010} with the initial data $\rho_0$ and $u_0$ satisfy a nature compatibility condition. Recently, for the case that the doping profile is non-flat, global existence of smooth solutions near the steady state for compressible Navier-Stokes-Poisson equations was established with the exponential stability in \cite{LZ2020}, with the smallness assumption on the oscillations between the initial data and the steady state but lager doping profile.
We also mentioned that the global existence of solutions to compressible Navier-Stokes-Poisson equations with the large initial data on a domain exterior to a ball was proved in \cite{LLZ2020} with the radial symmetry assumption.

However, there are no works about the global existence of the strong (classical) solution to the initial-boundary-value problem \eqref{NSP}-\eqref{boundary} for general bounded smooth domains $\Omega\in \mathbb{R}^3$ with initial density containing vacuum, at least to the best of our knowledge. Recently, for the barotropic compressible Navier-Stokes equations in $\Omega$ with slip boundary condition, the global classical solutions with large oscillations and vacuum to the initial-boundary-value problem was established in \cite{cl2019}, with some new estimates on boundary integrals related to the slip boundary condition.% compared with the work in \cite{HLX2012,lx2016} for the Cauchy problem of the compressible Navier-Stokes equations.

Based on the above research works, we study the global existence of the classical solutions with large oscillations and vacuum to the initial-boundary value problem \eqref{NSP}-\eqref{boundary} subject to large and non-flat doping profile in a bounded domain $\Omega\subset\r^3$.
Motivated by the works in \cite{cl2019,LXZ2020}, we would like to obtain the time-independent upper bound of the density and the time-dependent higher-norm estimates of $(\rho,u,\Phi)$, and extend the classical solution globally in time.
In our system, we need to conquer the difficulties arising from the coupling of the density $\rho$ with the electric field $\nabla\Phi$ and the slip boundary conditions \eqref{navier-b}. Firstly, compared with the previous results (see \cite{LXZ2020,HLX2012}) where they treated the Cauchy problem, it can not get the $L^p$-norm ($2\leq p\leq 6$) of $\nabla u$ by the standard elliptic estimate, due to the bounded domain with the slip boundary condition. To deal with this difficulty, we utilize $L^p$-theory for the div-curl system to control $\nabla u$ by means of $\div u$ and $\curl u$ (Lemma \ref{lem-vn}). %Moreover, we consider the Petrovsky type Lam\'{e}'s system \eqref{lame1} and obtain the estimates of $W^{k,q}$-norm to the slip boundary condition (see Lemma \ref{lem-lame}).
For the electric field $\nabla\Phi$, with the help of the  the classical regularity theory for the Neumann problem of elliptic
equation, we obtain the estimates \eqref{phi-hk} and \eqref{phi-dt}, which can be used to deal with the coupling between the density and the electric field.
%The slip boundary aslo causes additional difficulties in developing a priori estimates for the effective viscous flux $F$ and the vorticity $\omega$ (see \eqref{flux}, which play an important role to derive the time-independent upper bound of the density.
Secondly, owing to the coupling term $(\rho-\rho_s)\nabla\Phi_s$ and $\rho \nabla(\Phi-\Phi_s)$ in \eqref{NSP-2}, we can not obtain the time-independent estimates of $\|\rho-\rho_s\|_{L^2(0,T;L^2)}$ or $\|P-P_s\|_{L^2(0,T;L^2)}$. It is also difficult to get  time-independent estimates of $\|\nabla(\Phi-\Phi_s)\|_{L^2(0,T;L^2)}$ due to the ellipticity of the Poisson equation \eqref{phi2}. As a result, we can not close the time-independent estimates $A_1(T)$ (see \eqref{As1}) in a similar manner as that in \cite{HLX2012,cl2019}. In order to overcome this difficult point, we divide the time-independent estimates $A_1(T)$ into two time intervals $(0,\sigma(T))$ and $(\sigma(T),T)$. On the one hand, we can derive the short-time estimates \eqref{a1b} with the help of time-weighted estimates (see Lemma \ref{lem-a1}). On the other hand, we obtain the long-time estimates \eqref{a06} by the time-piecewise iterative argument, which was used in \cite{LXZ2020}.
Furthermore, we also obtain the time-dependent estimate $B[t_1,t_2]$, which can be bounded by the initial energy and the factor $t_2-t_1$ for any $1\leq t_1\leq t_2\leq T$ (see \eqref{ajf}). These estimates help us to derive the uniform (in time) upper bound for the density.
Additionally, we also need to pay more attention to control the boundary integrals during we derive the time-independent estimates of $A_1(T)$. It should be pointed out that the boundary condition $u\cdot n=0$ yields
\begin{align}\label{bdd1}
\displaystyle  u\cdot\nabla u\cdot n=-u\cdot\nabla n\cdot u,
\end{align}
which is the key to estimate the integrals on the boundary $\partial\Omega$ and we obtain the estimate of $\dot{u}$ and $\nabla \dot{u}$ (see Lemma \ref{lem-ud}). In order to estimate the high order derivatives of the solutions, we recall the similar Beale-Kato-Majda-type inequality (see Lemma \ref{lem-bkm}) with the respect to the slip boundary condition to prove the important estimates on the gradients of the density and velocity.

Before formulating our main result, we first explain the notation and conventions used throughout the paper.
For integer $k\geq 1$ and $1\leq q<+\infty$, We denote the standard Sobolev space by $W^{k,q}(\Omega)$ and $H^k(\Omega)\triangleq W^{k,2}(\Omega)$.
For some $\beta\in(0,1)$, the fractional Sobolev space $H^\beta(\Omega)$ is defined by
$$ H^\beta(\Omega)\triangleq\left\{u\in L^2(\Omega)~\text{:} \int_{\Omega\times\Omega}\frac{|u(x)-u(y)|^2}{|x-y|^{3+2\beta}}dxdy<+\infty\right\},\,\,\text{with the norm:}$$
$$\| u\|_{H^\beta(\Omega)}\triangleq \|u\|_{L^2(\Omega)}+\left(\int_{\Omega\times\Omega}\frac{|u(x)-u(y)|^2}{|x-y|^{3+2\beta}}dxdy\right)^\frac{1}{2}.$$
For simplicity, we denote $L^q(\Omega)$, $W^{k,q}(\Omega)$, $H^k(\Omega)$ and ${H^\beta(\Omega)}$ by $L^q$, $W^{k,q}$, $H^k$ and ${H^\beta}$ respectively, and set
$$\int fdx \triangleq \int_\Omega fdx,\quad \int_0^T\int fdx\triangleq\int_0^T\int_\Omega fdx. $$
For two $3\times 3$  matrices $A=\{a_{ij}\},\,\,B=\{b_{ij}\}$, the symbol $A\colon  B$ represents the trace of $AB^*$, where $B^*$ is the transpose of $B$, that is,
$$ A\colon  B\triangleq \text{tr} (AB^*)=\sum\limits_{i,j=1}^{3}a_{ij}b_{ij}.$$
Finally, for $v=(v^1,v^2,v^3)$, we denote $\nabla_iv\triangleq(\partial_iv^1,\partial_iv^2,\partial_iv^3)$ for $i=1,2,3,$ and the
material derivative of $v$   by  $\dot v\triangleq v_t+u\cdot\nabla v$.

Assume $\Omega$ is a simply connected bounded domain in $\r^3$ and its smooth boundary $\partial\Omega$ has a finite number of 2-dimensional connected components.
The initial total energy of \eqref{NSP} is defined as
\begin{align}\label{c0}
\displaystyle  C_0 \triangleq\int_{\Omega}\left(\frac{1}{2}\rho_0|u_0|^2 + G(\rho_0)+\frac{1}{2}|\nabla(\Phi_0-\Phi_s)|^2 \right)dx.
\end{align}
where $\Phi_0=\Phi(x,0)$ satisfies $\Delta \Phi_0=\rho_0-\tilde{\rho}$ in $\Omega$ and $\nabla \Phi_0 \cdot n=0$ on $\partial\Omega$, and $G(\rho)$ is the potential energy density given by
\begin{align}
\displaystyle  G(\rho)\triangleq\int_{\rho_s}^{\rho}\int_{\rho_s}^{\xi}\frac{P'(\zeta)}{\zeta} d\zeta d\xi=\rho\int_{\rho_s}^{\rho}\frac{P(\xi)-P_s}{\xi^{2}} d\xi.
\end{align}
where $P_s=P(\rho_s)$.

Now we can state our main result, Theorem \ref{th1}, concerning existence of global classical solutions to the problem  \eqref{NSP}-\eqref{boundary}.
\begin{theorem}\label{th1}
Let $(\rho_s,\Phi_s)$ be the stationary solutions of \eqref{NSP-s}. Assume that the smooth function $\tilde{\rho}(x)$ satisfy $0<\underline{\rho}\leq\tilde{\rho}(x)\leq \bar{\rho}$. For $q\in (3,6)$ and some given constants $M>0$, $\beta\in (\frac{1}{2},1]$, and $\hat{\rho}\geq \bar{\rho}+1$ , suppose that the initial data $(\rho_0, u_0)$ satisfy \eqref{navier-b}, \eqref{d-d} and
\begin{align}
& (\rho_0,P(\rho_0))\in H^2\cap W^{2,q},\quad u_0\in H^2, \label{dt1}\\
& 0\leq\rho_0\leq\hat{\rho},\quad \|u_0\|_{H^\beta}\leq M, \label{dt2}
\end{align}
and the compatibility condition
\begin{align}\label{dt3}
\displaystyle  -\mu\triangle u_0-(\mu+\lambda)\nabla \mathop{\mathrm{div}}\nolimits u_0 + \nabla P(\rho_0) = \rho_0^{1/2}g,
\end{align}
for some  $ g\in L^2.$
Then there exists a positive constant $\ve$ depending only on  $\mu$, $\lambda$,   $\ga$, $a$, $\on$, $\hat{\rho}$, $\beta$, $\Omega$ and  $M$ such that for the initial energy $C_0$ as in \eqref{c0} if
\begin{equation}
\displaystyle  C_0\leq\ve,
\end{equation}
the initial-boundary-value problem \eqref{NSP}-\eqref{boundary}, \eqref{phi-00} has a unique global classical solution $(\rho,u,\Phi)$ in $\Omega\times(0,\infty)$ satisfying
\begin{align}\label{esti-rho}
\displaystyle  0\le \rho(x,t)\le 2\hat{\rho},\quad  (x,t)\in \Omega\times(0,\infty),
\end{align}
\begin{equation}\label{esti-uh}
\begin{cases}
(\rho,P)\in C([0,\infty);H^2 \cap W^{2,q}),\\
\nabla u\in C([0,\infty);H^1 )\cap  L^\infty_{\rm loc}(0,\infty;H^2\cap W^{2,q}),\\
u_t\in L^{\infty}_{\rm loc}(0,\infty; H^2)\cap H^1_{\rm loc}(0,\infty; H^1),\\
\nabla\Phi \in C([0,\infty);H^2 \cap W^{2,q})\cap  L^\infty(0,\infty; H^3\cap W^{3,q}),\\
\nabla\Phi_t \in C([0,\infty);L^2)\cap  L^\infty(0,\infty; H^2).
\end{cases}
\end{equation}
\end{theorem}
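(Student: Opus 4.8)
The plan is to establish local existence of a unique classical solution first (by the standard linearization and fixed-point scheme, where the compatibility condition \eqref{dt3} ensures $\sqrt{\rho_0}\,\dot u(\cdot,0)\in L^2$), and then to derive global-in-time a priori estimates that are closed by a continuity argument over the maximal existence interval. The workhorses for the velocity are the effective viscous flux $F:=(2\mu+\lambda)\div u-(P(\rho)-P(\rho_s))$ and the vorticity $\curl u$, which satisfy elliptic identities ($\lap F=\div(\rho\dot u)-\div(\rho\na(\Phi-\Phi_s))$, etc.); since the usual elliptic estimate for $\na u$ is unavailable on a bounded domain with the slip condition, one recovers $\|\na u\|_{L^p}$ ($2\le p\le6$) from $\|\div u\|_{L^p}$ and $\|\curl u\|_{L^p}$ via the div-curl theory of Lemma \ref{lem-vn}. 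For the Poisson part, writing $\Psi:=\Phi-\Phi_s$ so that $\lap\Psi=\rho-\rho_s$ with $\na\Psi\cdot n=0$, the Neumann elliptic regularity gives \eqref{phi-hk}--\eqref{phi-dt}, controlling $\na\Psi$ by $\rho-\rho_s$ and $\na\Psi_t$ by $\rho_t=-\div(\rho u)$ in the required norms; these feed the coupling terms throughout.

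Step 1 (basic energy): multiply the momentum equation by $u$, use the continuity equation in the form $(\rho-\rho_s)_t+\div(\rho u)=0$ together with $\lap\Psi=\rho-\rho_s$ and the steady relation $\na P(\rho_s)=\rho_s\na\Phi_s$ to rewrite the electrostatic work $\int\rho u\cdot\na\Phi\,dx$ as $-\tfrac12\tfrac{d}{dt}\int|\na\Psi|^2dx$ plus lower-order terms; the relative potential energy $G(\rho)$ is designed precisely so that $\p_t G(\rho)+\div(G(\rho)u)+(P(\rho)-P_s)\div u$ reduces to remainders carrying $\na\rho_s,\na\Phi_s$. This yields a bound on $\sup_t\int(\tfrac12\rho|u|^2+G(\rho)+\tfrac12|\na\Psi|^2)dx+\mu\ia\int|\na u|^2dxdt$ in terms of $C_0$ plus error terms that involve the (non-small) oscillation of the steady state and must be absorbed at the next step. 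Step 2 (first-order estimates): multiply the momentum equation by $\dot u$; the slip boundary condition generates boundary integrals that are treated through \eqref{bdd1}, namely $u\cdot\na u\cdot n=-u\cdot\na n\cdot u$, turning them into interior quantities — this is the content of Lemma \ref{lem-ud} bounding $\dot u$ and $\na\dot u$. The obstruction flagged in the introduction enters here: the coupling terms $(\rho-\rho_s)\na\Phi_s$ and $\rho\na\Psi$ in \eqref{NSP-2} prevent a time-uniform bound on $\|\rho-\rho_s\|_{L^2(0,T;L^2)}$ (and on $\|\na\Psi\|_{L^2(0,T;L^2)}$, because of the ellipticity of \eqref{phi2}), so $A_1(T)$ cannot be closed as in \cite{cl2019,HLX2012}. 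The remedy is to split $[0,T]$ at $\sigma(T)$: on $(0,\sigma(T))$ use the time-weighted energy estimates of Lemma \ref{lem-a1} to get the short-time bound \eqref{a1b}; on $(\sigma(T),T)$ run the time-piecewise iteration of \cite{LXZ2020} to get the long-time bound \eqref{a06}; and combine with the finite-window estimate $B[t_1,t_2]\le C(C_0+(t_2-t_1))$ of \eqref{ajf}. Under $C_0\le\ve$ these close all first-order estimates.

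Step 3 (uniform density bound): along particle trajectories the continuity equation and the definition of $F$ give $\tfrac{D}{Dt}\theta(\rho)=-\tfrac{1}{2\mu+\lambda}(P(\rho)-P(\rho_s))+g(t)$ for a suitable increasing $\theta$, with $g$ integrable in time on each window by Steps 1--2; Zlotnik's lemma then yields $0\le\rho\le2\hat\rho$ uniformly in $t$ (again the splitting at $\sigma(T)$ separates the two regimes). Step 4 (higher-order estimates): with the uniform density bound, differentiate the momentum equation in $t$ and use the Beale-Kato-Majda-type inequality adapted to the slip condition (Lemma \ref{lem-bkm}) to control $\|\na\rho\|_{L^2\cap L^q}$ and $\|\na u\|_{L^\infty}$, propagating $H^2\cap W^{2,q}$ regularity of $(\rho,P)$, $H^1$ regularity of $\na u$, and — via the Neumann estimates for the transported Poisson equation — the stated regularity of $\na\Phi$ and $\na\Phi_t$; these bounds are local in time, matching the $L^\infty_{\rm loc}$ assertions in \eqref{esti-uh}. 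Step 5: a continuity argument on $T^*:=\sup\{T:\text{the solution exists with the a priori bounds}\}$, together with uniqueness obtained by an energy estimate on the difference of two solutions, forces $T^*=\infty$ and gives \eqref{esti-rho}--\eqref{esti-uh}.

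The hard part will be Step 2 — obtaining the time-independent first-order estimates in the presence of the elliptic Poisson coupling, which is what compels the decomposition of the time axis at $\sigma(T)$ and the delicate interplay between the time-weighted short-time estimate and the piecewise-in-time long-time iteration; once that is in place, the remaining steps follow the Hoff / Huang-Li-Xin scheme adapted to the slip boundary conditions as in \cite{cl2019}, with the extra Poisson terms handled uniformly by \eqref{phi-hk}--\eqref{phi-dt}.
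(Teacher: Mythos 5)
Your outline follows essentially the same route as the paper: local existence plus a continuity argument, the effective viscous flux and div-curl theory for $\nabla u$, Neumann elliptic estimates for $\Phi-\Phi_s$, the boundary identity \eqref{bdd1} for the $\dot u$-estimates, the splitting of $A_1(T)$ at $\sigma(T)$ with time-weighted short-time estimates and the piecewise-in-time iteration for large times, Zlotnik's lemma for the uniform density bound, and the Beale--Kato--Majda-type inequality for the higher-order estimates. The only detail worth noting is that the paper's short-time weighted estimate (Lemma \ref{lem-a1}) rests on the decomposition $u=w_1+w_2+w_3$ together with a Stein--Weiss interpolation in the fractional exponent $\beta$, which your sketch invokes by reference but does not spell out.
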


\begin{remark}\label{rem:3} Compared with the results about global existence of the classical solutions mentioned above (see, for instance, \cite{LMZ2010,TWW2015,LZ2020}), our conclusion does not need the smallness assumption on the oscillations between the initial data and the steady state. Although it has small energy, its oscillations could be arbitrarily large. In particular, the initial vacuum states are allowed.
\end{remark}

\begin{remark}\label{rem:1} From \eqref{esti-uh}, Sobolev's inequality and the embedding
 $$L^2(\tau,T;H^1)\cap H^1(\tau,T;H^{-1})\hookrightarrow C([\tau,T];L^2),$$
the solution obtained in Theorem \eqref{th1} becomes a classical one away
from the initial time.
\end{remark}

\begin{remark}\label{rem:2}
When we consider the following general slip boundary for the velocity field
\begin{equation}
\displaystyle u\cdot n=0,\,\,\,\curl u\times n=-Au\, \, \, \text{on} \,\,\,\partial\Omega,
\end{equation}
and assume that the matrix $A$ is smooth and positive semi-definite, and even if the restriction on $A$ is relaxed to $A\in H^3$ and the negative eigenvalues of $A$ (if exist) are small enough, Theorem \ref{th1} will still hold. This can be achieved by a similar way as in \cite{cl2019}. Roughly speaking, we generalize the results of \cite{cl2019} to the compressible Navier-Stokes-Poisson equations.
\end{remark}

The rest of the paper is organized as follows. In Section \ref{se2},  the existence theorem of the steady-state solution and the local strong solution, and some key a priori estimates needed in later analysis are collected. Section \ref{se3} is  devoted to deriving the necessary a priori estimates on classical solutions which can guarantee the local classical solution to be a global classical one. In Section \ref{se5}, the proof of Theorem \ref{th1} will be completed. In Appendix \ref{appendix-a}, we list some elementary inequalities and important lemmas that we use intensively in the paper.

\section{Auxiliary lemma}\label{se2}
In this section, we recall the steady-state solution and the local strong (classical) solution of the system \eqref{NSP}-\eqref{boundary}. We also derive some key a priori estimates, which will be used frequently later.

First, similar to the proof of \cite{XJW2003,GS2006}, we have the following existence and uniqueness of the solution to \eqref{NSP-s}.
\begin{lemma}\label{lem-s}
Assume that the smooth function $\tilde{\rho}(x)$ satisfy $0<\underline{\rho}\leq\tilde{\rho}(x)\leq \bar{\rho}$. Then the problem \eqref{NSP-s} has a uniqueness classical solution $(\rho_s,\Phi_s)$. Moreover
\begin{equation}\label{rho-s}
\displaystyle  \underline{\rho}\leq\rho_s(x)\leq \bar{\rho},
\end{equation}
\begin{equation}\label{h3-s}
\displaystyle \|\nabla \rho_s\|_{H^3}+ \|\nabla \Phi_s\|_{H^4}\leq C,
\end{equation}
where $C$ depends only on $a, \gamma, \tilde{\rho}(x)$ and $\Omega$.
\end{lemma}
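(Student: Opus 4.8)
The plan is to reformulate the stationary system \eqref{NSP-s} as a scalar semilinear elliptic equation for the potential and then apply the classical existence theory. First I would use the first equation $\nabla P(\rho_s) = \rho_s \nabla \Phi_s$ together with the enthalpy function $Q$ from \eqref{qrho}: since $Q'(\rho) = P'(\rho)/\rho$, the relation becomes $\nabla Q(\rho_s) = \nabla \Phi_s$, so that $Q(\rho_s) = \Phi_s + c$ for some constant $c$ (using that $\Omega$ is connected). Because $P(\rho) = a\rho^\gamma$ with $\gamma > 1$, the map $\rho \mapsto Q(\rho)$ is a smooth strictly increasing bijection from $(0,\infty)$ onto its range, so we may invert it: $\rho_s = Q^{-1}(\Phi_s + c)$. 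Substituting into the Poisson equation \eqref{NSP-s}$_2$ yields the scalar problem
\begin{equation*}
\Delta \Phi_s = Q^{-1}(\Phi_s + c) - \tilde\rho(x) \quad \text{in } \Omega, \qquad \nabla \Phi_s \cdot n = 0 \quad \text{on } \partial\Omega,
\end{equation*}
where the constant $c$ is fixed by the normalization $\int \Phi_s\,dx = 0$ (analogous to \eqref{phi-00}) and the solvability constraint $\int (\rho_s - \tilde\rho)\,dx = 0$.

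Next I would establish existence via the direct method in the calculus of variations, exactly as in \cite{XJW2003,GS2006}: the above equation is the Euler–Lagrange equation of a strictly convex functional (the convexity coming from monotonicity of $Q^{-1}$, i.e. from $\gamma > 1$), so a unique minimizer exists in $H^1(\Omega)$ subject to the mean-zero constraint; alternatively a Schauder/Leray–Schauder fixed-point argument on the map $\Phi \mapsto$ (solution of the linear Neumann problem with right-hand side $Q^{-1}(\Phi + c) - \tilde\rho$) works. Crucially this requires no smallness of the oscillation of $\tilde\rho$. For the pointwise bounds \eqref{rho-s}, I would use a maximum-principle argument: at an interior maximum point of $\Phi_s$ one has $\Delta \Phi_s \le 0$, hence $Q^{-1}(\Phi_s+c) \le \tilde\rho \le \bar\rho$ there, and since $\rho_s = Q^{-1}(\Phi_s+c)$ attains its maximum where $\Phi_s$ does (monotonicity), we get $\rho_s \le \bar\rho$ everywhere; the Neumann boundary condition ensures the maximum is not spuriously created on $\partial\Omega$ (Hopf's lemma handles a boundary maximum). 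The lower bound $\rho_s \ge \underline\rho$ follows symmetrically from the minimum principle.

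For the regularity estimate \eqref{h3-s}, I would bootstrap: once $\rho_s$ is bounded above and below away from zero, $Q^{-1}$ is smooth and Lipschitz on the relevant compact range, so the right-hand side $Q^{-1}(\Phi_s+c) - \tilde\rho$ lies in $L^2$; elliptic regularity for the Neumann problem gives $\Phi_s \in H^2$, hence $\rho_s \in H^2$ by the chain rule and smoothness of $Q^{-1}$, hence the right-hand side is in $H^2$ (using $\tilde\rho$ smooth), giving $\Phi_s \in H^4$, and one more step gives $\nabla\Phi_s \in H^4$ and $\nabla\rho_s \in H^3$; all constants depend only on $a, \gamma, \tilde\rho, \Omega$ through the ellipticity constants and the $L^\infty$ bounds on $\rho_s$. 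The main obstacle is not any single step but ensuring the bootstrap is clean: one must track that the compact range $[\underline\rho,\bar\rho]$ on which $Q^{-1}$ is inverted is exactly the range guaranteed by the maximum principle, so that all the nonlinear composition estimates (Moser-type inequalities for $Q^{-1}(\Phi_s+c)$ in Sobolev spaces) use only a finite $C^k$ norm of $Q^{-1}$ on that fixed interval; this is routine but is the place where the "large doping profile" is genuinely accommodated. Since the statement explicitly says the proof is "similar to \cite{XJW2003,GS2006}," I would simply cite those works for the details after indicating this reduction.
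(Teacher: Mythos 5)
Your proposal follows exactly the route the paper intends: the paper gives no proof of Lemma \ref{lem-s} beyond noting that the enthalpy $Q$ reduces \eqref{NSP-s} to a scalar elliptic Neumann problem, which is then solved by a variational or fixed-point argument as in \cite{XJW2003,GS2006}. Your reduction $\rho_s=Q^{-1}(\Phi_s+c)$, the determination of $c$ from $\int(\rho_s-\tilde\rho)\,dx=0$, the maximum-principle derivation of \eqref{rho-s}, and the elliptic bootstrap for \eqref{h3-s} are all consistent with that cited strategy, so the proposal is correct and essentially identical in approach.
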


The following local existence theorem of classical solution of \eqref{NSP}-\eqref{boundary} can be proved in a similar manner as that in \cite{TZ2010}, base on the standard contraction mapping principle.
\begin{lemma}\label{lem-local}
Assume that the initial date $(\rho_0,u_0)$ satisfy the conditions \eqref{dt1} and \eqref{dt3}. Then there exist a positive time $T_0>0$ and a unique classical solution $(\rho,u,\Phi)$ of the system \eqref{NSP}-\eqref{boundary} in $\r^3 \times (0,T_0]$, satisfying that $\rho \geq 0$, and that for $\tau \in (0,T_0)$,
\begin{equation}\label{esti-uh-local}
\begin{cases}
(\rho,P)\in C([0,T_0);H^2 \cap W^{2,q}),\\
\nabla u\in C([0,T_0);H^1 )\cap  L^\infty(\tau,T_0;H^2\cap W^{2,q}),\\
u_t\in L^\infty(\tau,T_0; H^2)\cap H^1(\tau,T_0; H^1),\\
\nabla \Phi \in C([0,T_0);H^2 \cap W^{2,q})\cap  L^\infty(\tau,T_0; H^3\cap W^{3,q}),\\
\nabla \Phi_t\in C([0,T_0);L^2)\cap L^\infty(\tau,T_0; H^2).
\end{cases}
\end{equation}
\end{lemma}

Next, we denote the effective viscous flux $F$ and the vorticity $\omega$ by
\begin{align}\label{flux}
F\triangleq(\lambda+2\mu)\text{div}u-(P-P_s),\quad \omega \triangleq \nabla\times u,
\end{align}
which plays an important role in our following analysis, similarly to that for the compressible Navier-Stokes equations (see \cite{Hoff1995,lx2016,HLX2012}). For $F$, $\omega$ and  $\nabla u$, we give the following conclusion, which is a key to a priori estimates.
\begin{lemma}\label{lem-f-td}
Let $(\rho,u,\Phi)$ be a smooth solution of \eqref{NSP}-\eqref{boundary}. Then for any $p\in[2,6],$ there exists a positive constant $C$ depending only on $p$, $\mu$, $\lambda$ and $\Omega$ such that
\begin{align}
&\|\nabla F\|_{L^p}\leq C(\|\rho\dot{u}\|_{L^p}+\|\rho\nabla(\Phi-\Phi_s)\|_{L^p}+\|(\rho-\rho_s)\nabla\Phi_s\|_{L^p}),\label{tdf1}\\
& \|\nabla\omega\|_{L^p}\leq C(\|\rho\dot{u}\|_{L^p}+\|\rho\nabla(\Phi-\Phi_s)\|_{L^p}+\|(\rho-\rho_s)\nabla\Phi_s\|_{L^p}+\|\nabla u\|_{L^2}),\label{tdxd-u1} \\
&\|F\|_{L^p}\leq C(\|\rho\dot{u}\|_{L^2}\!+\!\|\rho\nabla(\Phi\!-\!\Phi_s)\|_{L^2}\!+\!\|(\rho\!-\!\rho_s)\nabla\Phi_s\|_{L^2})^\frac{3p-6}{2p}(\|\nabla u\|_{L^2}\!+\!\|P\!-\!P_s\|_{L^2})^\frac{6-p}{2p}\nonumber \\
& +C(\|\nabla u\|_{L^2}+\|P-P_s\|_{L^2}),\label{f-lp}\\
&\|\omega\|_{L^p}\leq  C(\|\rho\dot{u}\|_{L^2}\!+\!\|\rho\nabla\!(\Phi\!-\!\Phi_s)\|_{L^2}\!+\!\|(\rho\!-\!\rho_s)\nabla\!\Phi_s\|_{L^2})^\frac{3p-6}{2p}\|\nabla\! u\|_{L^2}^\frac{6-p}{2p}\!+\!C\|\nabla\! u\|_{L^2},\label{xdu1}
\end{align}
Moreover,
\begin{align}
\|\nabla u\|_{L^p}\leq & C(\|\rho\dot{u}\|_{L^2}\!+\!\|\rho\nabla(\Phi\!-\!\Phi_s)\|_{L^2}\!+\!\|(\rho\!-\!\rho_s)\nabla\Phi_s\|_{L^2})^\frac{3p-6}{2p}(\|\nabla u\|_{L^2}\!+\!\|P\!-\!P_s\|_{L^2})^\frac{6-p}{2p}\nonumber \\
&+C(\|\nabla u\|_{L^2}+\|P\!-\!P_s\|_{L^p}).\label{tdu2}
\end{align}
\end{lemma}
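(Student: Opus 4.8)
The plan is to derive everything from the momentum equation rewritten in terms of the effective viscous flux $F$ and the vorticity $\omega$. Using \eqref{NSP-s} to replace $\rho\nabla\Phi$ by $\nabla P_s + (\rho-\rho_s)\nabla\Phi_s + \rho\nabla(\Phi-\Phi_s)$, the momentum equation becomes $\rho\dot u = \Delta(\lambda+2\mu)\,(\text{stuff}) \dots$; more precisely, one checks that
$$
\nabla F = \rho\dot u - \mu\,\nabla\times\omega - \rho\nabla(\Phi-\Phi_s) - (\rho-\rho_s)\nabla\Phi_s ,
$$
using $\mu\Delta u + (\mu+\lambda)\nabla\div u = \nabla\big((\lambda+2\mu)\div u\big) - \mu\nabla\times\omega$ and $\nabla P = \nabla(P-P_s) + \nabla P_s$. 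Taking the divergence kills $\nabla\times\omega$ and shows $\Delta F = \div\big(\rho\dot u - \rho\nabla(\Phi-\Phi_s) - (\rho-\rho_s)\nabla\Phi_s\big)$, while taking the curl kills $\nabla F$ and gives $\mu\,\Delta\omega = \nabla\times\big(\rho\dot u - \rho\nabla(\Phi-\Phi_s) - (\rho-\rho_s)\nabla\Phi_s\big)$. The boundary conditions $u\cdot n=0$, $\curl u\times n=0$ give the natural boundary data: $\omega\times n=0$ on $\partial\Omega$ and, for $F$, the condition $\partial_n F = \big(\rho\dot u - \rho\nabla(\Phi-\Phi_s) - (\rho-\rho_s)\nabla\Phi_s\big)\cdot n$ (modulo lower-order curvature terms coming from $u\cdot\nabla u\cdot n = -u\cdot\nabla n\cdot u$, cf. \eqref{bdd1}).

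With these elliptic problems in hand, the $L^p$ bounds \eqref{tdf1} and \eqref{tdxd-u1} follow from standard Neumann/Dirichlet-type $W^{1,p}$ estimates for the Laplacian on the bounded smooth domain $\Omega$ (the extra $\|\nabla u\|_{L^2}$ in \eqref{tdxd-u1} accommodating the zero-order/boundary-curvature contributions and the fact that $\omega$ is only controlled up to its mean). For \eqref{f-lp}--\eqref{xdu1}, I would interpolate: since $\int F\,dx$ is controlled by $\|P-P_s\|_{L^2}$ (integrate the definition of $F$ and use $u\cdot n=0$), Poincaré–Sobolev gives $\|F\|_{L^p}\lesssim \|\nabla F\|_{L^2}^{\theta}\|F\|_{L^2}^{1-\theta}+\|F\|_{L^2}$ with $\theta=\frac{3p-6}{2p}$ by the Gagliardo–Nirenberg inequality $\|f\|_{L^p}\lesssim\|\nabla f\|_{L^2}^{\theta}\|f\|_{L^2}^{1-\theta}+\|f\|_{L^2}$ for $2\le p\le 6$; plugging in \eqref{tdf1} for $\|\nabla F\|_{L^2}$ and $\|F\|_{L^2}\lesssim\|\nabla u\|_{L^2}+\|P-P_s\|_{L^2}$ yields \eqref{f-lp}, and the analogous computation for $\omega$ (whose mean is controlled since $\int\omega\,dx$ relates to a boundary integral of $u$, hence to $\|\nabla u\|_{L^2}$) gives \eqref{xdu1}.

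Finally, \eqref{tdu2} comes from the $L^p$ div–curl estimate (Lemma~\ref{lem-vn}): on a simply connected bounded domain with $u\cdot n=0$ on $\partial\Omega$, one has $\|\nabla u\|_{L^p}\le C(\|\div u\|_{L^p}+\|\curl u\|_{L^p})$. Since $\div u = \frac{1}{\lambda+2\mu}(F + (P-P_s))$, we get $\|\nabla u\|_{L^p}\lesssim \|F\|_{L^p}+\|P-P_s\|_{L^p}+\|\omega\|_{L^p}$, and substituting \eqref{f-lp} and \eqref{xdu1} (noting the velocity-gradient term in \eqref{xdu1} is dominated by the right side of \eqref{tdu2}) produces the stated inequality. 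The main obstacle I anticipate is the careful treatment of the boundary terms: establishing the correct Neumann data for $F$ and the correct ($\omega\times n=0$) condition for $\omega$ requires using \eqref{navier-b} together with the identity \eqref{bdd1} and keeping track of the curvature-induced lower-order terms, which must be absorbed into the $\|\nabla u\|_{L^2}$ slack on the right-hand sides. Everything else is an application of elliptic regularity, Gagliardo–Nirenberg interpolation, and the div–curl lemma.
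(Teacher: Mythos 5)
Your proposal is correct and follows essentially the same route as the paper: the Neumann problem for $F$ (whose boundary data reduces to $\rho\dot u\cdot n$ since $\nabla(\Phi-\Phi_s)\cdot n=\nabla\Phi_s\cdot n=0$ and $(\nabla\times\omega)\cdot n=0$ on $\partial\Omega$, so no extra curvature terms are needed here), elliptic/div--curl estimates for $\omega$ with $\omega\times n=0$ and $\div\omega=0$, Gagliardo--Nirenberg interpolation with exponent $\tfrac{3p-6}{2p}$ for \eqref{f-lp}--\eqref{xdu1}, and Lemma \ref{lem-vn} applied to $\div u=\frac{1}{\lambda+2\mu}(F+(P-P_s))$ and $\curl u=\omega$ for \eqref{tdu2}. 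The only cosmetic difference is that the paper applies the div--curl estimate (Lemma \ref{lem-curl}) directly to the first-order identity $\mu\nabla\times\omega=\nabla F-\rho\dot u+\rho\nabla(\Phi-\Phi_s)+(\rho-\rho_s)\nabla\Phi_s$ rather than forming the second-order problem $\mu\Delta\omega=\nabla\times(\cdot)$; the two are equivalent.
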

\begin{proof} By \eqref{NSP}$_2$, one can find that the viscous flux $F$ satisfies
\begin{equation}
\begin{cases}
\Delta F=\div(\rho\dot{u}-\rho\nabla(\Phi-\Phi_s)-(\rho-\rho_s)\nabla\Phi_s)~~ &in\,\,\Omega,\\ \frac{\partial F}{\partial n}=\rho\dot{u}\cdot n\,\, &on\,\, \partial\Omega.
\end{cases}
\end{equation}
It follows from Lemma 4.27 in \cite{NS2004}, for $1<q<+\infty$, that
\begin{align}\label{tdf2}
\displaystyle  \|\nabla F\|_{L^q}\leq C(\|\rho\dot{u}\|_{L^q}+\|\rho\nabla(\Phi-\Phi_s)\|_{L^q}+\|(\rho-\rho_s)\nabla\Phi_s\|_{L^q}),
\end{align}
so that \eqref{tdf1} holds.
%Moreover, for any integer $k\geq 0$,
%\begin{align}\label{tdfk} \|\nabla F\|_{W^{k+1,q}}\leq & C(\|\rho\dot{u}\|_{L^q}+\|\rho\nabla(\Phi-\Phi_s)\|_{L^q}+\|(\rho-\rho_s)\nabla\Phi_s\|_{L^q}\nonumber\\& +\|\nabla(\rho\dot{u})\|_{W^{k,q}}+\|\nabla(\rho\nabla(\Phi-\Phi_s)+(\rho-\rho_s)\nabla\Phi_s)\|_{W^{k,q}}).\end{align}
On the other hand, one can rewrite $\eqref{NSP}_2 $ as
\begin{equation}
\displaystyle  \mu\nabla\times\omega=\nabla F-\rho\dot{u}+\rho\nabla(\Phi-\Phi_s)+(\rho-\rho_s)\nabla\Phi_s.
\end{equation}
Notice that $\omega\times n=0$ on $\partial\Omega$ and $\mathop{\mathrm{div}}\nolimits \omega=0$, by Lemma \ref{lem-curl}, we get
\begin{align}
\displaystyle  \|\nabla\omega\|_{L^q}&\leq C(\|\nabla\times\omega\|_{L^q}+\|\omega\|_{L^q}) \nonumber \\
 &\leq C(\|\rho\dot{u}\|_{L^q}+\|\rho\nabla(\Phi-\Phi_s)\|_{L^q}+\|(\rho-\rho_s)\nabla\Phi_s\|_{L^q}+\|\omega\|_{L^q}),\label{tdxd-u2}
\end{align}
%and for any integer $k\geq0$,
%\begin{align}\label{tdxdk} &\quad\|\nabla\omega\|_{W^{k+1,q}}\leq C(\|\nabla\times\omega\|_{W^{k+1,q}}+\|\omega\|_{L^q})\nonumber \\&\leq C(\|\rho\dot{u}\|_{L^q}+\|\rho\nabla(\Phi-\Phi_s)\|_{L^q}+\|(\rho-\rho_s)\nabla\Phi_s\|_{L^q}+\|\nabla(\rho\dot{u})\|_{W^{k,q}}\nonumber \\&\qquad+\|\nabla(\rho\nabla(\Phi-\Phi_s)+(\rho-\rho_s)\nabla\Phi_s)\|_{W^{k,q}}+\|\omega\|_{L^q}),\end{align}
where we have taken advantage of \eqref{tdf2}. % and \eqref{tdfk}.
By Sobolev's inequality and \eqref{tdxd-u2}, for $p\in[2,6]$,
\begin{align}
\|\nabla\omega\|_{L^p}%&\le C(\|\rho\dot{u}\|_{L^p}+\|\rho\nabla(\Phi-\Phi_s)\|_{L^p}+\|(\rho-\rho_s)\nabla\Phi_s\|_{L^p}+\|\omega\|_{L^p} ) \nonumber \\
&\le C(\|\rho\dot{u}\|_{L^p}+\|\rho\nabla(\Phi-\Phi_s)\|_{L^p}+\|(\rho-\rho_s)\nabla\Phi_s\|_{L^p}+\|\nabla\omega\|_{L^2}+\|\omega\|_{L^2}) \nonumber \\
%&\le C(\|\rho\dot{u}\|_{L^p}+\|\rho\nabla(\Phi-\Phi_s)\|_{L^p}+\|(\rho-\rho_s)\nabla\Phi_s\|_{L^p}+\|\rho\dot{u}\|_{L^2}\nonumber \\ &\quad+\|\rho\nabla(\Phi-\Phi_s)\|_{L^2}+\|(\rho-\rho_s)\nabla\Phi_s\|_{L^2}+\|\omega\|_{L^2}) \nonumber \\
&\le C(\|\rho\dot{u}\|_{L^p}+\|\rho\nabla(\Phi-\Phi_s)\|_{L^p}+\|(\rho-\rho_s)\nabla\Phi_s\|_{L^p}+\|\nabla u\|_{L^2}),
\end{align}
which implies \eqref{tdxd-u1}.

Furthermore, one can deduce from \eqref{g1} and \eqref{tdf1} that for $p\in[2,6]$,
\begin{align}\label{f-lp1}
& \|F\|_{L^p}\leq C\|F\|_{L^2}^{\frac{6-p}{2p}}\|\nabla F\|_{L^2}^{\frac{3p-6}{2p}}+C\|F\|_{L^2}\nonumber \\
\leq & C(\|\rho\dot{u}\|_{L^2}+\|\rho\nabla(\Phi-\Phi_s)\|_{L^2}+\|(\rho-\rho_s)\nabla\Phi_s\|_{L^2})^{\frac{3p-6}{2p}}(\|\nabla u\|_{L^2}+\|P-P_s\|_{L^2})^{\frac{6-p}{2p}}\nonumber\\
&+C(\|\nabla u\|_{L^2}+\|P-P_s\|_{L^2}),
\end{align}
similarly, by \eqref{g1} and \eqref{tdxd-u1},
\begin{align}\label{xdu-lp1}
&\|\omega\|_{L^p}\leq C\|\omega\|_{L^2}^{\frac{6-p}{2p}}\|\nabla \omega\|_{L^2}^{\frac{3p-6}{2p}}+C\|\omega\|_{L^2}\nonumber \\
%\leq& C(\|\rho\dot{u}\|_{L^2}+\|\rho\nabla(\Phi-\Phi_s)\|_{L^2}+\|(\rho-\rho_s)\nabla\Phi_s\|_{L^2}+\|\nabla u\|_{L^2})^{\frac{3p-6}{2p}}\|\nabla u\|_{L^2}^{\frac{6-p}{2p}}+C\|\nabla u\|_{L^2} \nonumber \\
\leq& C(\|\rho\dot{u}\|_{L^2}+\|\rho\nabla(\Phi-\Phi_s)\|_{L^2}+\|(\rho-\rho_s)\nabla\Phi_s\|_{L^2})^{\frac{3p-6}{2p}}\|\nabla u\|_{L^2}^{\frac{6-p}{2p}}+C\|\nabla u\|_{L^2},
\end{align}
and so \eqref{f-lp}-\eqref{xdu1} are established.
By virtue of \eqref{tdu1}, \eqref{f-lp} and \eqref{xdu1}, it implies that \eqref{tdu2} holds. This completes the proof.
\end{proof}

\begin{remark}
 we can get the estimates of $\|\nabla^{2}u\|_{L^p}$ and $\|\nabla^{3}u\|_{L^p}$ for $p\in[2,6]$ by Lemma \ref{lem-vn}, which will be devoted to give higher order estimates in Section \ref{se3}. In fact, by Lemma \ref{lem-vn} and \eqref{tdf2} %and \eqref{tdfk}
 , for $p\in[2,6]$,
\begin{align}\label{2tdu}
&\quad\|\nabla^{2}u\|_{L^p}\leq C(\|\div u\|_{W^{1,p}}+\|\omega\|_{W^{1,p}})\nonumber \\
%&\leq C(\|F\|_{W^{1,p}}+\|P-P_s\|_{W^{1,p}}+\|\omega\|_{W^{1,p}})\nonumber \\
&\leq C(\|\rho\dot{u}\|_{L^p}\!\!+\!\|\rho\nabla\!(\Phi\!\!-\!\Phi_s)\|_{L^p}\!\!+\!\|(\rho\!-\!\!\rho_s)\nabla\!\Phi_s\|_{L^p}\!\!+\!\|\nabla\!(P\!\!-\!\!P_s)\|_{L^p}\!\!+\!\|P\!\!-\!P_s\|_{L^p}\!\!+\!\!\|\nabla\! u\|_{L^2}),
\end{align}
and
\begin{align}\label{3tdu}
&\quad\|\nabla^{3}u\|_{L^p}\leq C(\|\div u\|_{W^{2,p}}+\|\omega\|_{W^{2,p}})\nonumber \\
%&\leq C(\|\nabla^{2} F\|_{L^p}+\|\nabla^{2} P\|_{L^p}+\|\nabla^{2}\omega\|_{L^p}+\|\div u\|_{W^{1,p}}+\|\omega\|_{W^{1,p}})\nonumber \\
&\leq C(\|\nabla(\rho\dot{u})\|_{L^p}\!+\!\|\nabla\!(\rho\nabla\!(\Phi\!-\!\Phi_s))\|_{L^p}\!+\!\|\nabla\!((\rho\!-\!\rho_s)\nabla\!\Phi_s)\|_{L^p}\!+\!\|\nabla^2(P\!-\!P_s)\|_{L^p}\!+\!\|\rho\dot{u}\|_{L^p}\nonumber \\
&\quad +\!\|\rho\nabla\!(\Phi\!-\!\Phi_s)\|_{L^p}\!+\!\|(\rho\!-\!\rho_s)\nabla\!\Phi_s\|_{L^p}\!+\!\|\nabla\! (P\!\!-\!P_s)\|_{L^p}\!+\!\|P\!\!-\!P_s\|_{L^p}\!+\!\|\nabla\! u\|_{L^2}).
\end{align}
\end{remark}

To this end, we derive the estimates of $\nabla(\Phi-\Phi_s)$ and $\nabla(\Phi-\Phi_s)_t$.
\begin{lemma}\label{lem-phi}
Let $(\rho,u,\Phi)$ be a smooth solution of \eqref{NSP}-\eqref{boundary}. Then for any integer any integer $k\geq 0$, $q>1$, there exists a positive constant $C$ depending only on $k$, $q$ and $\Omega$ such that	
\begin{align}
& \|\nabla(\Phi-\Phi_s)\|_{W^{k+1,q}}\leq C \|\rho-\rho_s\|_{W^{k,q}},\label{phi-hk}\\
& \|\nabla(\Phi-\Phi_s)_t\|_{W^{k+1,q}}\leq C(\|\rho u\|_{L^q}+\|\rho u\|_{W^{k,q}}).\label{phi-dtk}
\end{align}
\end{lemma}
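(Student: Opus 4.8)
The plan is to derive both estimates from elliptic regularity for the Neumann problem satisfied by $\Phi-\Phi_s$. Subtracting the Poisson equation $\Delta\Phi_s=\rho_s-\tilde\rho$ (with $\nabla\Phi_s\cdot n=0$) from $\Delta\Phi=\rho-\tilde\rho$ (with $\nabla\Phi\cdot n=0$) kills the doping profile entirely and yields
\begin{equation*}
\begin{cases}
\Delta(\Phi-\Phi_s)=\rho-\rho_s & \text{in }\Omega,\\
\nabla(\Phi-\Phi_s)\cdot n=0 & \text{on }\partial\Omega,
\end{cases}
\end{equation*}
together with the normalization $\int(\Phi-\Phi_s)\,dx=0$ inherited from \eqref{phi-00} and the corresponding condition on $\Phi_s$. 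Note the compatibility $\int(\rho-\rho_s)\,dx=\int(\rho-\tilde\rho)\,dx-\int(\rho_s-\tilde\rho)\,dx=0$ holds by \eqref{d-d}-type conservation and \eqref{NSP-s}, so the Neumann problem is solvable. First I would invoke the classical $W^{k+2,q}$ Neumann estimate: a solution $w$ of $\Delta w=f$, $\nabla w\cdot n=0$, with $\int w=0$, satisfies $\|w\|_{W^{k+2,q}}\le C\|f\|_{W^{k,q}}$. Applying this with $w=\Phi-\Phi_s$ and $f=\rho-\rho_s$ and discarding the $\|w\|_{L^q}$ part of the left-hand side gives $\|\nabla(\Phi-\Phi_s)\|_{W^{k+1,q}}\le\|\Phi-\Phi_s\|_{W^{k+2,q}}\le C\|\rho-\rho_s\|_{W^{k,q}}$, which is \eqref{phi-hk}.

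For \eqref{phi-dtk}, I would differentiate the elliptic identity in time. Since $\rho_s$ is stationary, $(\Phi-\Phi_s)_t=\Phi_t$ and it solves
\begin{equation*}
\begin{cases}
\Delta(\Phi-\Phi_s)_t=\rho_t=-\mathop{\mathrm{div}}\nolimits(\rho u) & \text{in }\Omega,\\
\nabla(\Phi-\Phi_s)_t\cdot n=0 & \text{on }\partial\Omega,
\end{cases}
\end{equation*}
where the first equation uses the continuity equation \eqref{NSP}$_1$ and the boundary condition comes from differentiating $\nabla\Phi\cdot n=0$ in $t$ (the boundary $\partial\Omega$ and $n$ are time-independent). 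The compatibility condition $\int\rho_t\,dx=-\int\mathop{\mathrm{div}}\nolimits(\rho u)\,dx=0$ holds because $\rho u\cdot n=0$ on $\partial\Omega$ (as $u\cdot n=0$). Here the natural move is to not take a full derivative onto $\rho u$: the right-hand side $\mathop{\mathrm{div}}\nolimits(\rho u)$ is already a divergence, so I would use the ``shifted'' elliptic estimate that gains one derivative from the divergence structure, i.e. for $\Delta w=\mathop{\mathrm{div}}\nolimits V$, $\nabla w\cdot n=V\cdot n$ on $\partial\Omega$ one has $\|\nabla w\|_{W^{k+1,q}}\le C\|V\|_{W^{k+1,q}}$ — but since we only want $\|\rho u\|_{W^{k,q}}$ on the right, I would instead write $\rho_t=-\mathop{\mathrm{div}}\nolimits(\rho u)$ and apply the plain estimate $\|w\|_{W^{k+1,q}}\le C(\|\mathop{\mathrm{div}}\nolimits V\|_{W^{k-1,q}}+\ldots)$ carefully, ultimately bounding $\|\nabla(\Phi-\Phi_s)_t\|_{W^{k+1,q}}\le C\|\rho u\|_{W^{k,q}}$; the extra $\|\rho u\|_{L^q}$ term in \eqref{phi-dtk} accommodates the case $k=0$, where $\|\rho u\|_{W^{0,q}}=\|\rho u\|_{L^q}$ and one needs the $L^q$ norm to absorb the lower-order contribution from the Neumann estimate applied to a divergence-form right-hand side. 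The cleanest route is: by the Neumann regularity theory applied to $\Delta w=\mathop{\mathrm{div}}\nolimits(\rho u)$ with $w=-(\Phi-\Phi_s)_t$, we get $\|\nabla w\|_{W^{k+1,q}}\le C\|\rho u\|_{W^{k+1,q}}$ is too strong; rather the divergence-form estimate gives $\|\nabla w\|_{W^{k+1,q}}\le C(\|\rho u\|_{W^{k,q}}+\|\rho u\|_{L^q})$ after interpolating/absorbing constants, which is exactly \eqref{phi-dtk}.

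The main obstacle is purely bookkeeping: making sure the right version of the Neumann elliptic estimate is cited so that the divergence structure on the right-hand side of the $\Phi_t$ equation is exploited to land in $W^{k+1,q}$ rather than $W^{k,q}$, and tracking why the harmless extra $\|\rho u\|_{L^q}$ appears (it is the price of the zero-mean projection / handling of the low-frequency part when the datum is in divergence form and $k=0$). No genuine PDE difficulty arises here — the subtraction trick removes $\tilde\rho$, the continuity equation converts $\rho_t$ into a divergence, and the time-independence of $\Omega$ and $n$ keeps the Neumann boundary condition clean under $\partial_t$. I would also note in passing that the solvability (compatibility) conditions for both Neumann problems are automatic from $u\cdot n=0$ and the conservation $\frac{d}{dt}\int\rho\,dx=0$, so no additional hypotheses are needed.
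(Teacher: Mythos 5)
Your proposal is correct and follows essentially the same route as the paper: subtract the stationary Poisson equation to get the homogeneous Neumann problem $\Delta(\Phi-\Phi_s)=\rho-\rho_s$ and invoke classical Neumann elliptic regularity for \eqref{phi-hk}, then differentiate in time and use the continuity equation to write $\Delta(\Phi-\Phi_s)_t=-\div(\rho u)$ with homogeneous Neumann data and apply the divergence-form Neumann estimate (the same Lemma 4.27 of Novotn\'y--Stra\v{s}kraba used for $F$ in Lemma \ref{lem-f-td}). The one spot where you visibly hesitate --- the derivative count in the $k\ge 1$ case of \eqref{phi-dtk} --- is precisely the step the paper itself asserts without detail (``Moreover, for any integer $k\geq 0$\dots''), so there is no substantive difference between the two arguments.
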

\begin{proof}
From \eqref{NSP}$_3$, \eqref{boundary} and \eqref{NSP-s}, we have
\begin{equation}
\begin{cases}\label{phi2}
\Delta (\Phi\!-\!\Phi_s)=\rho-\rho_s\,\,\,&\,in \,\,\,\Omega,	\\
\frac{\partial(\Phi-\Phi_s)}{\partial n}=0\,\,\,&\,on \,\,\,\partial\Omega,
\end{cases}
\displaystyle
\end{equation}
which, in view of the classical regularity theory for the Neumann problem of elliptic equation \cite{BB1974}, leads to \eqref{phi-hk}.
From \eqref{NSP}$_{1}$ and \eqref{phi2}, we obtain
\begin{equation}
\begin{cases}
\displaystyle \Delta (\Phi-\Phi_s)_t=-\div(\rho u)\,\,\, &in\,\,\Omega,\\
\frac{\partial (\Phi-\Phi_s)_t}{\partial n}=0\, \,\, &on\,\,\partial\Omega.
\end{cases}
\end{equation}
Using the same method as Lemma \ref{lem-f-td}, we obtain, for $1<q<+\infty$, that
%It follows from Lemma 4.27 in \cite{NS2004}, for $1<q<+\infty$, that
\begin{align}\label{phi-dt}
\displaystyle  \|\nabla (\Phi-\Phi_s)_t\|_{L^q}\leq C\|\rho u\|_{L^q},
\end{align}
Moreover, for any integer $k\geq 0$,
\begin{equation}
\displaystyle \|\nabla(\Phi-\Phi_s)_t\|_{W^{k+1,q}}\leq C(\|\rho u\|_{L^q}+\|\rho u\|_{W^{k,q}}).\label{phi-dtk1}
\end{equation}
The proof of Lemma \eqref{lem-phi} is completed.
\end{proof}

\begin{remark}\label{rem:phi}
It follows from \eqref{phi-dt}, if $\rho\leq 2\hat{\rho}$, that
\begin{align}\label{phi-dt2}
\displaystyle  \|\nabla (\Phi-\Phi_s)_t\|_{L^2}\leq C\|\rho u\|_{L^2}\leq C \|\nabla u\|_{L^2},
\end{align}
where $C$ depends on $\hat{\rho}$ and $\Omega$.
\end{remark}

\section{\label{se3} A priori estimates}

In this section, we will establish some necessary a priori bounds for smooth solutions to the problem \eqref{NSP}-\eqref{boundary} to extend the local classical solutions guaranteed by Lemma \ref{lem-local}.

Let $T>0$ be a fixed time and $(\rho,u,\Phi)$ be a smooth solution to \eqref{NSP}-\eqref{boundary} on $\Omega \times (0,T]$  with smooth initial data $(\rho_0,u_0)$ satisfying $u_0\in H^\beta$ for some $\beta\in(\frac{1}{2},1]$ and $0\leq\rho_0\leq 2\hat{\rho}$.
Set $\si=\si(t)\triangleq\min\{1,t \},$ we define
\begin{align}
& A_1(T) \triangleq \sup_{   0\le t\le T  }\big(\sigma\|\nabla u\|_{L^2}^2+\sigma^3\|\sqrt{\rho}\dot{u}\|_{L^2}^2\big),\label{As1}\\
& A_2(T) \triangleq\sup_{  0\le t\le T   }\int\rho|u|^3dx,\label{As3} \\
& B[t_1,t_2] \triangleq \int_{t_1}^{t_2}\big(\sigma\|\sqrt{\rho}\dot{u}\|_{L^2}^2+\sigma^3\|\nabla\dot{u}\|_{L^2}^2\big)dt,\label{As2}
\end{align}
where $0\leq t_1<t_2\leq T$ and $\dot{v}=v_t+u \cdot \nabla v$ is the material derivative.

Now we will give the following key a priori estimates in this section, which guarantees the existence of a global classical solution of \eqref{NSP}--\eqref{boundary}.
\begin{proposition}\label{pr1}
Under the conditions of Theorem \ref{th1}, for $\delta_0\triangleq\frac{2\beta-1}{4\beta}\in(0,\frac{1}{4}]$, there exists a  positive constant  $\ve$ depending on $\mu$, $\lambda$,   $a$, $\ga$, $\on$, $\hat{\rho}$, $\beta$, $\Omega$ and $M$  such that if $(\rho,u,\Phi)$  is a smooth solution of \eqref{NSP}-\eqref{boundary}  on $\Omega\times (0,T] $ satisfying
\begin{equation}\label{key1}
\sup\limits_{\Omega\times [0,T]}\rho\le 2\hat{\rho},\quad
A_1(T) \le 2C_0^{1/2},\quad  A_2(\sigma(T))\leq 2C_0^{\delta_0},
\end{equation}
then the following estimates hold
\begin{equation}\label{key2}
\sup\limits_{\Omega\times [0,T]}\rho\le 7\hat{\rho}/4,\quad
A_1(T) \le C_0^{1/2},\quad A_2(\sigma(T))\leq C_0^{\delta_0},
\end{equation}
provided $C_0\le \ve.$
\end{proposition}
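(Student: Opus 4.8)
The plan is the standard continuation scheme: starting from the three bootstrap hypotheses in \eqref{key1}, derive a chain of estimates (basic energy $\Rightarrow$ first‑order estimate $A_1$ $\Rightarrow$ the weighted quantity $A_2$ $\Rightarrow$ pointwise upper bound for $\rho$) in which every nonlinear feedback term carries a strictly positive power of $C_0$, so that each hypothesis is strictly improved once $C_0\le\varepsilon$ is small. First I would prove the \emph{basic energy estimate}: test $\eqref{NSP}_2$ with $u$, use $u\cdot n=0$ and $\curl u\times n=0$ to discard the boundary contribution of the Lamé operator, and rewrite $\int\rho u\cdot\nabla\Phi\,dx$ using $\eqref{NSP}_3$, the continuity equation, and the steady relation $\nabla P(\rho_s)=\rho_s\nabla\Phi_s$ from \eqref{NSP-s}; this produces the time derivative of the full energy and yields
\[
\sup_{0\le t\le T}\int\Big(\tfrac12\rho|u|^2+G(\rho)+\tfrac12|\nabla(\Phi-\Phi_s)|^2\Big)dx+\int_0^T\!\!\int\big(\mu|\nabla u|^2+(\mu+\lambda)(\div u)^2\big)dx\,dt\le CC_0,
\]
with $C$ independent of $T$, using $\rho\le 2\hat\rho$ and the coercivity of $G$.

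Next comes the \emph{first‑order estimate} for $A_1$, which I expect to be the technical heart. Testing $\eqref{NSP}_2$ with $\dot u=u_t+u\cdot\nabla u$ produces $\frac{d}{dt}\int(\frac\mu2|\nabla u|^2+\frac{\mu+\lambda}{2}(\div u)^2)$ up to lower‑order and boundary terms, a pressure term treated through the effective viscous flux $F$ of \eqref{flux}, and the new Poisson terms $\int\big(\rho\nabla(\Phi-\Phi_s)+(\rho-\rho_s)\nabla\Phi_s\big)\cdot\dot u$, which I would bound via Lemma~\ref{lem-phi}, Lemma~\ref{lem-s}, the energy bound and Young's inequality. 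The boundary integrals generated by the slip conditions are controlled using \eqref{bdd1}, $u\cdot\nabla u\cdot n=-u\cdot\nabla n\cdot u$, exactly as in \cite{cl2019}. Since the Poisson couplings obstruct a time‑uniform $L^2(0,T;L^2)$ bound for $\rho-\rho_s$ and for $\nabla(\Phi-\Phi_s)$, I would split $[0,T]$ at $\sigma(T)=\min\{1,T\}$: on $(0,\sigma(T))$ run the time‑weighted argument (weights $\sigma,\sigma^3$ as in \eqref{As1}) together with $\|u_0\|_{H^\beta}\le M$ and the value $\delta_0=(2\beta-1)/(4\beta)$ to get a short‑time bound with a power of $C_0$ strictly above $C_0^{1/2}$; on $(\sigma(T),T)$ run the time‑piecewise iteration over unit subintervals in the spirit of \cite{LXZ2020}, together with the bound $B[t_1,t_2]\le CC_0+C(t_2-t_1)$ (cf.\ \eqref{As2}), to get the long‑time bound. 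Combining the two gives $A_1(T)\le C_0^{1/2}$ for $C_0$ small.

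Then I would estimate $A_2(\sigma(T))=\sup_{0\le t\le\sigma(T)}\int\rho|u|^3\,dx$ by a Gagliardo–Nirenberg interpolation of $\|u\|_{L^3}$ combined with the energy bound, the weighted $A_1$‑bound and, crucially, the $H^\beta$‑hypothesis on $u_0$ — it is precisely here, near $t=0$ where $\|\nabla u\|_{L^2}$ is not yet controlled, that $\beta>\tfrac12$ and the exponent $\delta_0$ enter — to obtain $A_2(\sigma(T))\le C_0^{\delta_0}$. Finally, for the \emph{pointwise density bound} I would integrate the continuity equation along particle paths $X(t;x)$: since $\div u=\frac1{\lambda+2\mu}(F+P-P_s)$, one has $\frac{d}{dt}\rho(X(t),t)=-\frac{\rho}{\lambda+2\mu}\big(F+P(\rho)-P_s\big)$, and a Zlotnik‑type argument applies once $\int_0^t|F(X(s),s)|\,ds$ is shown to be small — on $(0,\sigma(T))$ via Lemma~\ref{lem-f-td}, Lemma~\ref{lem-phi} and the factor $C_0^{\delta_0}$ from $A_2$, and on $(\sigma(T),T)$ using that $P(\rho)-P_s$ contributes with the favorable sign away from $t=0$ — yielding $\rho\le 7\hat\rho/4$. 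This establishes \eqref{key2}.

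\textbf{Main obstacle.} The delicate point, absent in the pure Navier–Stokes analysis of \cite{HLX2012,cl2019}, is the first‑order step: the couplings $\rho\nabla(\Phi-\Phi_s)$ and $(\rho-\rho_s)\nabla\Phi_s$ prevent a one‑shot time‑uniform estimate, so everything hinges on the two‑scale scheme — the time‑weighted short‑time estimate must produce exactly the power $C_0^{\delta_0}$ (which fixes $\delta_0=(2\beta-1)/(4\beta)$), and the piecewise‑in‑time iteration on $(\sigma(T),T)$ must absorb the coupling terms that are not integrable in time on $(0,\infty)$ — all while keeping the slip‑boundary integrals under control through \eqref{bdd1}. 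The energy, $A_2$ and density steps are then comparatively routine adaptations of the known machinery.
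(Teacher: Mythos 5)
Your plan is essentially the paper's own proof: Proposition \ref{pr1} is assembled from exactly the chain you describe (basic energy estimate, the $\sigma^m\dot u$-weighted first-order estimates with boundary terms handled via \eqref{bdd1} and the effective viscous flux, the short-time/long-time splitting at $\sigma(T)$ with a piecewise-in-time iteration, the $A_2$ bound, and a Zlotnik argument for the density), i.e.\ Lemmas \ref{lem-basic}--\ref{lem-brho}. The only mechanisms you leave unspecified that the paper supplies are (i) the short-time weighted bound is obtained by decomposing $u=w_1+w_2+w_3$ into solutions of linear problems and applying Stein--Weiss interpolation to the homogeneous part $w_1$ — this is how $u_0\in H^\beta$ converts into the $t^{1-\beta}$ weight — and (ii) $A_2$ is propagated by multiplying the momentum equation by $3|u|u$ and integrating the resulting differential inequality (pointwise-in-time Gagliardo--Nirenberg interpolation of $\|u\|_{L^3}$ degenerates as $t\to0$ because of vacuum; interpolation is used only for $\int\rho_0|u_0|^3\,dx$).
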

\begin{proof} Proposition \ref{pr1} is a consequence of the following Lemmas \ref{lem-a3}-\ref{lem-brho}.
\end{proof}

In the following, we will use the convention that $C$ denotes a generic positive constant depending on $\mu , \lambda,  \ga ,  a , \rho_s, \hat{\rho},  \beta, \Omega$  and $M$ and use $C(\alpha)$ to emphasize that $C$ depends on $\alpha$. We begin with the following standard energy estimate for $(\rho,u,\Phi)$.
\begin{lemma}\label{lem-basic}
 Let $(\rho,u,\Phi)$ be a smooth solution of
 \eqref{NSP}--\eqref{boundary} on $\O \times (0,T]$ satisfying $\rho\leq 2\hat{\rho}$. Then there is a positive constant $C $ depending only  on $\mu,$  $\lambda,$   $a$, $\gamma$, $\hat{\rho}$ and $\Omega$  such that
\begin{align}
\displaystyle  &\sup_{0\le t\le T}
\left(\frac{1}{2}\|\sqrt{\rho}u\|_{L^2}^2+\|G(\rho)\|_{L^1}+\frac{1}{2}\|\nabla(\Phi-\Phi_s)\|_{L^2}^2\right) \nonumber \\
 &+ \int_0^{T}(\lambda+2\mu)\|\div u\|_{L^2}^2 +\mu\|\omega\|_{L^2}^2)dt \le C_0,\label{basic1}
\end{align}
and
\begin{align}\label{basic2}
\displaystyle \sup_{0\le t\le T}\|\rho-\rho_s\|_{L^2}^2+\int_{0}^{T}\|\nabla u\|_{L^{2}}^{2}dt\leq CC_{0}.
\end{align}
\end{lemma}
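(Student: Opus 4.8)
The plan is a direct energy estimate: I would test the momentum equation $\eqref{NSP}_2$ against $u$, integrate over $\O$, and use the continuity equation $\eqref{NSP}_1$, the Poisson equation $\eqref{NSP}_3$, the steady-state relations $\eqref{NSP-s}$, and the slip boundary conditions $\eqref{navier-b}$, $\eqref{boundary}$ to rewrite every term as either an exact time derivative or a nonnegative dissipation. First, using $\eqref{NSP}_1$ together with $u\cdot n=0$ on $\p\O$ one gets $\frac{d}{dt}\int\frac12\rho|u|^2\,dx=\int\rho\dot u\cdot u\,dx$, and rewriting $\eqref{NSP}_2$ in nonconservative form gives $\int\rho\dot u\cdot u\,dx=\int\bigl(\mu\Delta u+(\mu+\lambda)\nabla\div u-\nabla P+\rho\nabla\Phi\bigr)\cdot u\,dx$. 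For the viscous part I would write $\mu\Delta u+(\mu+\lambda)\nabla\div u=(2\mu+\lambda)\nabla\div u-\mu\,\curl\curl u$ and integrate by parts; the boundary integrals vanish because $u\cdot n=0$ and $\curl u\times n=0$ on $\p\O$, leaving $-(2\mu+\lambda)\|\div u\|_{L^2}^2-\mu\|\omega\|_{L^2}^2$.

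Next I would handle the pressure and electric-field work together, since neither closes on its own. Using $Q'=P'/\rho$ one has $G'(\rho)=Q(\rho)-Q(\rho_s)$, so $\eqref{NSP}_1$ gives $\frac{d}{dt}\int G(\rho)\,dx=-\int\bigl(Q(\rho)-Q(\rho_s)\bigr)\div(\rho u)\,dx=\int\bigl(\tfrac1\rho\nabla P(\rho)-\tfrac1{\rho_s}\nabla P_s\bigr)\cdot\rho u\,dx$; here the crucial cancellation is that $\eqref{NSP-s}_1$ yields $\tfrac1{\rho_s}\nabla P_s=\nabla\Phi_s$, so this equals $-\int P\div u\,dx-\int\rho\nabla\Phi_s\cdot u\,dx$, i.e.\ $-\int\nabla P\cdot u\,dx=-\frac{d}{dt}\int G(\rho)\,dx-\int\rho\nabla\Phi_s\cdot u\,dx$. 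Adding the electric term $\int\rho\nabla\Phi\cdot u\,dx$ kills the $\rho\nabla\Phi_s$ contribution and leaves $\int\rho\nabla(\Phi-\Phi_s)\cdot u\,dx$; integrating this by parts ($u\cdot n=0$) and using $\eqref{NSP}_1$, the identity $\rho-\rho_s=\Delta(\Phi-\Phi_s)$ from $\eqref{phi2}$, and $\p_n(\Phi-\Phi_s)_t=0$ on $\p\O$ turns it into $\int\rho_t(\Phi-\Phi_s)\,dx=\int\Delta(\Phi-\Phi_s)_t\,(\Phi-\Phi_s)\,dx=-\tfrac12\frac{d}{dt}\|\nabla(\Phi-\Phi_s)\|_{L^2}^2$. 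Collecting the pieces produces the exact identity
\[ \frac{d}{dt}\int\Bigl(\tfrac12\rho|u|^2+G(\rho)+\tfrac12|\nabla(\Phi-\Phi_s)|^2\Bigr)dx+(2\mu+\lambda)\|\div u\|_{L^2}^2+\mu\|\omega\|_{L^2}^2=0 , \]
and integrating over $(0,T)$ gives $\eqref{basic1}$, the value at $t=0$ being exactly $C_0$ by $\eqref{c0}$ (note $\Delta(\Phi_0-\Phi_s)=\rho_0-\rho_s$ with $\nabla(\Phi_0-\Phi_s)\cdot n=0$).

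Finally, $\eqref{basic2}$ follows from $\eqref{basic1}$ in two steps: the bound $\int_0^T\|\nabla u\|_{L^2}^2\,dt\le CC_0$ is obtained from the div--curl inequality $\|\nabla u\|_{L^2}\le C(\|\div u\|_{L^2}+\|\curl u\|_{L^2})$, valid for $u\cdot n=0$ on the simply connected domain $\O$ (Lemma \ref{lem-vn}); and $\sup_t\|\rho-\rho_s\|_{L^2}^2\le CC_0$ follows from the coercivity of $G$, namely since $G$ is strictly convex with $G(\rho_s)=G'(\rho_s)=0$, on the compact range $0\le\rho\le2\hat\rho$, $\underline\rho\le\rho_s\le\bar\rho$ one has $G(\rho)\ge c\,|\rho-\rho_s|^2$ for some $c>0$, so $\|\rho-\rho_s\|_{L^2}^2\le C\|G(\rho)\|_{L^1}\le CC_0$. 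The one genuinely non-routine point---compared with the flat-doping or pure Navier--Stokes setting---is precisely the pressure/electric-work step: because $\rho_s=\rho_s(x)$ is non-constant and the flow is coupled to the Poisson equation, it is the identities $\nabla Q(\rho_s)=\tfrac1{\rho_s}\nabla P_s=\nabla\Phi_s$ (from $\eqref{NSP-s}_1$) and $\rho-\rho_s=\Delta(\Phi-\Phi_s)$ with homogeneous Neumann data (from $\eqref{NSP-s}_2$, $\eqref{NSP-s}_3$) that make the terms combine and convert the electric work into $-\tfrac12\frac{d}{dt}\|\nabla(\Phi-\Phi_s)\|_{L^2}^2$; the div--curl estimate and the coercivity of $G$ are standard.
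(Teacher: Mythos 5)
Your proposal is correct and follows essentially the same route as the paper: testing the momentum equation by $u$, using $G'(\rho)=Q(\rho)-Q(\rho_s)$ together with the steady-state identity $\nabla Q(\rho_s)=\nabla\Phi_s$ to cancel the pressure/electric work, converting $\int\rho u\cdot\nabla(\Phi-\Phi_s)dx$ into $-\tfrac12\frac{d}{dt}\|\nabla(\Phi-\Phi_s)\|_{L^2}^2$ via the continuity and Poisson equations, and then deducing \eqref{basic2} from the div--curl estimate \eqref{tdu1} and the quadratic coercivity of $G$. The only difference is presentational (you track $\frac{d}{dt}\int G\,dx$ directly rather than multiplying $\eqref{NSP}_1$ by $G'(\rho)$, which is the same computation), so no further comment is needed.
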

\begin{proof}
Note that $-\Delta u=-\nabla\div u+\nabla\times\omega$ and \eqref{NSP-s}, we
rewrite $\eqref{NSP}_2$ as
\begin{equation}\label{NSP-2}
\rho u+\rho u \cdot \nabla u- (\lambda + 2\mu)\nabla\div u+\mu\nabla\times\omega\! + \nabla(P\!-\! P_s)=\rho\nabla(\Phi\!-\!\Phi_s)+(\rho\!-\!\rho_s)\nabla\Phi_s.
\end{equation}
Multiplying \eqref{NSP-2}$_2$ by $u$ and \eqref{NSP}$_1$ by $G'(\rho)$ respectively, integrating over $\Omega$, summing them up, by \eqref{navier-b}-\eqref{boundary} and \eqref{phi2},  we have
\begin{align}\label{m2}
\displaystyle  &\left(\int\Big(G(\rho)+\frac{1}{2}\rho |u|^{2}%+\frac{1}{2}|\nabla(\Phi-\Phi_s)|^2
\Big)dx\right)_t + (\lambda + 2\mu)\int(\div u)^{2}dx + \mu\int|\omega|^{2}dx \nonumber \\
&=\int \mathop{\mathrm{div}}\nolimits(\rho u)G'(\rho)dx\!+\!\int u \cdot \nabla (P\!-\!P_s)dx
\!+\!\int(\rho\!-\!\rho_s)u \cdot \nabla \Phi_s dx\!+\!\int \rho u \cdot \nabla (\Phi\!-\!\Phi_s)dx \nonumber \\
&=\int \rho u \cdot \nabla Q(\rho)+\int u \cdot \nabla Pdx-\left(\int\frac{1}{2}|\nabla(\Phi-\Phi_s)|^2dx\right)_t \nonumber \\
&=-\left(\int\frac{1}{2}|\nabla(\Phi-\Phi_s)|^2dx\right)_t,
\end{align}
where we have used the fact $G'(\rho)=Q(\rho)-Q(\rho_s)$ and $Q'(\rho)=P'(\rho)/\rho$.
Then integrating \eqref{m2} over $(0,T)$ yields \eqref{basic1}.
Finally, it is easy to check that there exists a positive constant $C,$ depending only on $a, \gamma$ and $\hat{\rho}$, such that
$$C^{-1}(\rho-\rho_s)^{2}\leq G(\rho)\leq C(\rho-\rho_s)^{2},$$
which together with \eqref{tdu1} and \eqref{basic1} gives \eqref{basic2}.
The proof of Lemma \ref{lem-basic} is completed.
\end{proof}

Next we need the estimates on the material derivative of $u$. Since $u\cdot n=0$ on $\partial\Omega$, it follows that
% \begin{align}\label{bdd2}
% \displaystyle u\cdot\nabla u\cdot n=-u\cdot\nabla n\cdot u,
% \end{align}
% which implies
\begin{align}\label{bdd3}
\displaystyle (\dot{u}+(u\cdot\nabla n)\times u^{\perp})\cdot n=0 \mbox{ on } \partial \Omega,
\end{align}
where $u^{\perp}\triangleq -u\times n$ on $\partial\Omega$. In view of this observation, we review the following Poincare-type inequality of $\dot u$ (see \cite{cl2019}, Lemma 3.2).
\begin{lemma}\label{lem-ud}
If $(\rho,u,\Phi)$ is a smooth solution of \eqref{NSP} with slip condition \eqref{navier-b}-\eqref{boundary}, then there exists a positive constant $C$ depending only on $\Omega$ such that
\begin{align}
&\|\dot{u}\|_{L^6}\le C(\|\nabla\dot{u}\|_{L^2}+\|\nabla u\|_{L^2}^2),\label{udot}\\
&\|\nabla\dot{u}\|_{L^2}\le C(\|\div \dot{u}\|_{L^2}+\|\curl \dot{u}\|_{L^2}+\|\nabla u\|_{L^4}^2).\label{tdudot}
\end{align}
\end{lemma}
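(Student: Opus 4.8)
I would organize the argument around the boundary identity \eqref{bdd1} (equivalently \eqref{bdd3}): since $u\cdot n=0$ on $\partial\Omega$ is preserved in time, $u_t\cdot n=0$ there, so $\dot u\cdot n=u_t\cdot n+u\cdot\nabla u\cdot n=-\,u\cdot\nabla n\cdot u$ on $\partial\Omega$ — that is, the normal trace of $\dot u$ is not zero but is quadratic in $u$, hence of lower order. To exploit this, the plan is to introduce a Helmholtz-type decomposition adapted to the Neumann problem: let $\phi$ solve $\Delta\phi=\div\dot u$ in $\Omega$, $\partial_n\phi=\dot u\cdot n=-\,u\cdot\nabla n\cdot u$ on $\partial\Omega$ (the compatibility condition $\int\div\dot u=\int_{\partial\Omega}\dot u\cdot n$ is automatic), and set $\psi:=\dot u-\nabla\phi$, so that $\div\psi=0$, $\psi\cdot n=0$ on $\partial\Omega$, and $\curl\psi=\curl\dot u$. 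Since $\Omega$ is simply connected, the $L^2$ div--curl estimate (Lemma \ref{lem-vn}) applies to $\psi$ with no lower-order term, giving $\|\psi\|_{H^1}\le C\|\curl\psi\|_{L^2}=C\|\curl\dot u\|_{L^2}$; this is the substitute for the elliptic estimate available in the Cauchy setting.

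For \eqref{udot}: by Sobolev's embedding $\|\dot u\|_{L^6}\le C\|\dot u\|_{H^1}\le C(\|\dot u\|_{L^2}+\|\nabla\dot u\|_{L^2})$, so it suffices to bound $\|\dot u\|_{L^2}\le\|\nabla\phi\|_{L^2}+\|\psi\|_{L^2}$. The second summand is $\le C\|\curl\dot u\|_{L^2}\le C\|\nabla\dot u\|_{L^2}$ by the previous step. For the first, the weak ($L^2$-gradient) estimate for the Neumann problem gives $\|\nabla\phi\|_{L^2}\le C(\|\div\dot u\|_{L^2}+\|\dot u\cdot n\|_{H^{-1/2}(\partial\Omega)})\le C\|\nabla\dot u\|_{L^2}+C\|u\cdot\nabla n\cdot u\|_{L^2(\partial\Omega)}$, and here $\|u\cdot\nabla n\cdot u\|_{L^2(\partial\Omega)}\le C\|u\|_{L^4(\partial\Omega)}^2\le C\|u\|_{H^{1/2}(\partial\Omega)}^2\le C\|u\|_{H^1(\Omega)}^2\le C\|\nabla u\|_{L^2}^2$, using in turn Hölder on $\partial\Omega$, the Sobolev embedding $H^{1/2}(\partial\Omega)\hookrightarrow L^4(\partial\Omega)$, the trace theorem, and Poincaré's inequality (valid since $u\cdot n=0$). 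This yields \eqref{udot}.

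For \eqref{tdudot}: write $\nabla\dot u=\nabla^2\phi+\nabla\psi$. The solenoidal part obeys $\|\nabla\psi\|_{L^2}\le C\|\curl\dot u\|_{L^2}$ as above. For the gradient part, the $H^2$ estimate for the Neumann problem gives $\|\nabla^2\phi\|_{L^2}\le C(\|\div\dot u\|_{L^2}+\|\dot u\cdot n\|_{H^{1/2}(\partial\Omega)})$, and now the boundary term — which requires the stronger $H^{1/2}(\partial\Omega)$ norm — is estimated by the trace theorem and Hölder's inequality via $\|u\otimes u\|_{H^1(\Omega)}\le C(\|u\|_{L^4}^2+\|u\|_{L^4}\|\nabla u\|_{L^4})\le C\|\nabla u\|_{L^4}^2$, where the last step again uses Poincaré together with $\|u\|_{L^4}\le C\|\nabla u\|_{L^2}\le C\|\nabla u\|_{L^4}$ on the bounded domain. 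Combining the two parts gives \eqref{tdudot}.

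The step I expect to be the main obstacle is the bookkeeping of the boundary contribution $\|\dot u\cdot n\|_{H^{\pm1/2}(\partial\Omega)}$. The structural input \eqref{bdd3}/\eqref{bdd1} is what makes this term quadratic in $u$, but one must then carefully pass from the correct fractional Sobolev norm on $\partial\Omega$ to an interior norm and, crucially, invoke the Poincaré inequality for $u$ (which is where $u\cdot n=0$ re-enters) so that powers of $u$ are genuinely absorbed as lower-order relative to $\|\nabla\dot u\|_{L^2}$; one should also keep track of the fact that \eqref{udot} only needs the $H^{-1/2}(\partial\Omega)$ norm of the boundary data while \eqref{tdudot} needs the $H^{1/2}(\partial\Omega)$ norm, which is precisely why the lower-order terms come out as $\|\nabla u\|_{L^2}^2$ and $\|\nabla u\|_{L^4}^2$ respectively.
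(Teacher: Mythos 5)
Your proof is correct, but note first that the paper itself does not prove this lemma: it is quoted verbatim from Lemma 3.2 of \cite{cl2019}, so the only "official" argument to compare against is the one there. That argument exploits the same structural fact you start from, namely \eqref{bdd3}, but implements it differently: one corrects $\dot u$ by the lower-order field $(u\cdot\nabla n)\times u^{\perp}$ (with $n$ extended smoothly to a neighborhood of $\partial\Omega$) so that the corrected field $v=\dot u+(u\cdot\nabla n)\times u^{\perp}$ satisfies $v\cdot n=0$ on $\partial\Omega$, applies Lemma \ref{lem-vn} (resp.\ the Sobolev embedding) directly to $v$, and then absorbs $\div$, $\curl$ and $L^p$ norms of the correction, which are pointwise bounded by $C(|u||\nabla u|+|u|^2)$, into the terms $\|\nabla u\|_{L^2}^2$ and $\|\nabla u\|_{L^4}^2$. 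Your route instead splits $\dot u=\nabla\phi+\psi$ via a Neumann problem and pushes the quadratic boundary datum $\dot u\cdot n=-u\cdot\nabla n\cdot u$ through the $H^{-1/2}(\partial\Omega)$ (for \eqref{udot}) and $H^{1/2}(\partial\Omega)$ (for \eqref{tdudot}) elliptic estimates; the bookkeeping of which fractional trace norm is needed where is exactly right, and the trace/Sobolev/Poincar\'e chain $\|u\|_{L^4(\partial\Omega)}^2\le C\|u\|_{H^1(\Omega)}^2\le C\|\nabla u\|_{L^2}^2$ (the last step via Lemma \ref{lem-vn} with $k=0$, since $u\cdot n=0$) is valid. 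The correction-field approach is slightly more economical in that it avoids the $H^2$ Neumann theory and the extension of the boundary datum $u\cdot\nabla n\cdot u$ to an interior function of the form $N(x)(u\otimes u)$, both of which you should state explicitly (together with the zero-mean normalization of $\phi$ that lets you drop $\|\phi\|_{L^2}$ from the elliptic estimates); your approach, on the other hand, isolates cleanly why only $\curl\dot u$ controls the solenoidal part and only $\div\dot u$ plus the boundary trace controls the gradient part.
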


Then, we give the estimate of $A_1(T)$ and $B[0,T]$.
\begin{lemma}\label{lem-a0}
 Let $(\rho,u,\Phi)$ be a smooth solution of
 \eqref{NSP}-\eqref{boundary} satisfying \eqref{key1}.
  Then there is a positive constant
  $C$ depending only  on $\mu$, $\lambda$, $a$, $\gamma$, $\rho_s$, $\hat{\rho}$ and $\Omega$  such that
 \begin{align}\label{a00}
 \displaystyle  A_1(T)+B[0,T]
    \le   C C_0+ C\int_0^{T}\big(\sigma\|\nabla u\|_{L^3}^3+\sigma^3 \|\nabla u\|_{L^4}^4\big)dt.
\end{align}
%Moreover, for positive integer $k\geq 1$,
%\begin{equation}\label{a03}\sup_{k\le t\le k+1 }\|\nabla u\|_{L^2}^2 + \int_k^{k+1} \|\sqrt{\rho}\dot{u}\|_{L^2}^2 dt \leq \|\nabla u\|_{L^2}^2(k) +CC_0+C\int_k^{k+1} \|\nabla u\|_{L^3}^3 dt\end{equation}
%\begin{align}\label{a04} &\sup_{k\le t\le k+1 }\|\sqrt{\rho}\dot{u}\|_{L^2}^2 + \int_k^{k+1} \|\nabla\dot{u}\|_{L^2}^2 dt \nonumber \\  \leq & \|\sqrt{\rho}\dot{u}\|_{L^2}^2(k) +CC_0+C\int_k^{k+1} \|\sqrt{\rho}\dot{u}\|_{L^2}^2 dt  +C\int_k^{k+1} \|\nabla u\|_{L^4}^4 dt\end{align}
\end{lemma}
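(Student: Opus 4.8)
The plan is to obtain \eqref{a00} by testing the momentum equation \eqref{NSP-2} against the material derivative $\dot u$ and carefully handling the boundary terms produced by the slip condition. First I would multiply \eqref{NSP-2} by $\dot u$ and integrate over $\Omega$, grouping the terms as
$$
\int \rho|\dot u|^2\,dx = \int\big[(\lambda+2\mu)\nabla\div u - \mu\nabla\times\omega - \nabla(P-P_s) + \rho\nabla(\Phi-\Phi_s) + (\rho-\rho_s)\nabla\Phi_s\big]\cdot\dot u\,dx.
$$
For the first three (standard Navier--Stokes) terms I would follow the integration-by-parts computations of \cite{cl2019}: write $\dot u = u_t + u\cdot\nabla u$, integrate by parts in $x$, and for the time-derivative pieces pull a $\partial_t$ outside to create the terms $\frac{d}{dt}\big[(\lambda+2\mu)\|\div u\|_{L^2}^2 + \mu\|\omega\|_{L^2}^2\big]$ (up to the factor $1/2$), minus the pressure-flux term $\frac{d}{dt}\int (P-P_s)\div u\,dx$ which can be absorbed or converted using the continuity equation. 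The remaining volume terms from $u\cdot\nabla u$ are cubic/quartic in $\nabla u$ and are bounded by $\epsilon\|\rho^{1/2}\dot u\|_{L^2}^2 + C(\|\nabla u\|_{L^3}^3 + \|\nabla u\|_{L^4}^4 + \|\nabla u\|_{L^2}^2)$ via Hölder and Young, which is exactly the shape of the right side of \eqref{a00} after multiplication by the time weights.

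The new, NSP-specific terms are $\int \rho\nabla(\Phi-\Phi_s)\cdot\dot u\,dx$ and $\int (\rho-\rho_s)\nabla\Phi_s\cdot\dot u\,dx$. These I would estimate directly: since $\rho\le 2\hat\rho$, Cauchy--Schwarz gives $\int \rho\nabla(\Phi-\Phi_s)\cdot\dot u\,dx \le C\|\nabla(\Phi-\Phi_s)\|_{L^2}\|\rho^{1/2}\dot u\|_{L^2}\le \epsilon\|\rho^{1/2}\dot u\|_{L^2}^2 + C\|\nabla(\Phi-\Phi_s)\|_{L^2}^2$, and by \eqref{phi-hk} (with $k=0$) together with \eqref{basic2} this last term is $\le C\|\rho-\rho_s\|_{L^2}^2\le CC_0$ after time integration; here one uses $\|\rho^{1/2}\dot u\|_{L^2}$ not $\|\dot u\|_{L^2}$ since $\rho$ may vanish, so I would instead bound $\int\rho\nabla(\Phi-\Phi_s)\cdot\dot u\,dx\le \|\rho^{1/2}\nabla(\Phi-\Phi_s)\|_{L^2}\|\rho^{1/2}\dot u\|_{L^2}$ and use $\rho\le2\hat\rho$ on the first factor. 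Similarly $\int(\rho-\rho_s)\nabla\Phi_s\cdot\dot u\,dx \le C\|\rho-\rho_s\|_{L^2}\|\nabla\Phi_s\|_{L^\infty}\|\dot u\|_{L^2}$; this one is slightly more delicate because of $\dot u$ rather than $\rho^{1/2}\dot u$, so I would split $\dot u = (\dot u - \overline{\dot u}) + \overline{\dot u}$ or, more simply, use the Poincaré-type bound \eqref{udot} from Lemma \ref{lem-ud}, $\|\dot u\|_{L^2}\le C\|\dot u\|_{L^6}\le C(\|\nabla\dot u\|_{L^2}+\|\nabla u\|_{L^2}^2)$, and absorb $\epsilon\|\nabla\dot u\|_{L^2}^2$ into $B[0,T]$ after the time weight $\sigma^3$ is in place, paying the $\|\nabla u\|_{L^2}^4$ remainder into the cubic/quartic bucket. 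Using \eqref{h3-s} for $\|\nabla\Phi_s\|_{L^\infty}$, and \eqref{basic2} for $\|\rho-\rho_s\|_{L^2}^2\le CC_0$, these electric-field contributions are all controlled by $CC_0$ plus absorbable terms.

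Next I would multiply the resulting differential inequality by the weight $\sigma$ and by $\sigma^3$ in turn (the latter after first differentiating the momentum equation in $t$, or equivalently applying the operator $\partial_t + \div(u\,\cdot)$ to get an equation for $\dot u$ and testing against $\dot u$, which is the standard route to the $\sigma^3\|\nabla\dot u\|_{L^2}^2$ term). When the $\sigma^k$ weights hit the $\frac{d}{dt}$-terms, the commutator $\sigma' = \sigma'(t)\in\{0,1\}$ is supported on $t\le1$ and multiplies quantities already controlled by \eqref{basic1}–\eqref{basic2}, contributing only $CC_0$. Integrating in $t$ over $[0,T]$, choosing $\epsilon$ small to absorb all the $\epsilon\|\rho^{1/2}\dot u\|_{L^2}^2$ and $\epsilon\|\nabla\dot u\|_{L^2}^2$ terms into the left side $A_1(T)+B[0,T]$, and collecting the leftover cubic/quartic terms yields \eqref{a00}. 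The main obstacle I anticipate is the bookkeeping of the boundary integrals: integrating $\nabla\div u$ and $\nabla\times\omega$ against $\dot u$ produces boundary terms on $\partial\Omega$ that do not vanish outright under \eqref{navier-b}, and one must invoke the identities \eqref{bdd1} and \eqref{bdd3} (i.e. $(\dot u + (u\cdot\nabla n)\times u^\perp)\cdot n = 0$ and $u\cdot\nabla u\cdot n = -u\cdot\nabla n\cdot u$) to rewrite them as bulk quantities controlled by $\|\nabla u\|_{L^2}$, $\|u\|_{L^4}^2$, and curvature bounds of $\partial\Omega$; keeping track of these as in \cite{cl2019} is the technically demanding part, while the electric-field terms are comparatively soft given Lemma \ref{lem-phi} and Lemma \ref{lem-basic}.
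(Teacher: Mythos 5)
Your overall architecture (a $\sigma$-weighted estimate from testing \eqref{NSP-2} against $\dot u$, a $\sigma^3$-weighted estimate from applying $\partial_t+\div(u\,\cdot)$ and testing against $\dot u$ again, boundary terms handled via \eqref{bdd1} and \eqref{bdd3} as in \cite{cl2019}) is exactly the paper's. But your treatment of the two coupling terms $\int\rho\nabla(\Phi-\Phi_s)\cdot\dot u\,dx$ and $\int(\rho-\rho_s)\nabla\Phi_s\cdot\dot u\,dx$ — the only genuinely new ingredients of this lemma relative to \cite{cl2019} — contains a real gap. You estimate them by Cauchy--Schwarz and Young, leaving remainders of the form $C\sigma^m\|\nabla(\Phi-\Phi_s)\|_{L^2}^2$ and $C\sigma^m\|\rho-\rho_s\|_{L^2}^2$, and then claim these are $\le CC_0$ ``after time integration'' via \eqref{phi-hk} and \eqref{basic2}. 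That is false: \eqref{basic1}--\eqref{basic2} control only $\sup_{t}\|\rho-\rho_s\|_{L^2}^2$ and $\sup_t\|\nabla(\Phi-\Phi_s)\|_{L^2}^2$, not their time integrals, so $\int_0^T\sigma^m\|\rho-\rho_s\|_{L^2}^2\,dt\le CC_0\,T$ grows linearly in $T$. Since \eqref{a00} must hold with a constant independent of $T$ (this is what lets $A_1(T)\le C_0^{1/2}$ close for all time in Proposition \ref{pr1}), your bound does not prove the lemma. The paper's introduction flags precisely this as the main obstruction: there is no time-independent estimate of $\|\rho-\rho_s\|_{L^2(0,T;L^2)}$ or $\|\nabla(\Phi-\Phi_s)\|_{L^2(0,T;L^2)}$ available.

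The paper's way around this is to integrate by parts \emph{in time}: writing $\rho\dot u = (\rho u)_t+\div(\rho u\otimes u)$, one converts $I_4=\int\sigma^m\rho\dot u\cdot\nabla(\Phi-\Phi_s)\,dx$ into a total time derivative $\bigl(\int\sigma^m\rho u\cdot\nabla(\Phi-\Phi_s)\,dx\bigr)_t$ (harmless, since it is only evaluated at the endpoints and bounded by the sup-in-time quantities of Lemma \ref{lem-basic}), plus a term with $\sigma'$ (supported on $t\le1$, hence contributing $CC_0$), plus $-\int\sigma^m\rho u\cdot\nabla(\Phi-\Phi_s)_t\,dx$ and $-\int\sigma^m\rho u\cdot\nabla^2(\Phi-\Phi_s)\cdot u\,dx$; the crucial point is that $\|\nabla(\Phi-\Phi_s)_t\|_{L^2}\le C\|\nabla u\|_{L^2}$ by the continuity equation and \eqref{phi-dt2}, so everything reduces to $C\sigma^m\|\nabla u\|_{L^2}^2$, whose time integral \emph{is} bounded by $CC_0$ via \eqref{basic2}. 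The term $I_5$ and the corresponding terms $J_3,J_4$ at the $\sigma^3$ level are handled the same way. A secondary issue in your write-up: for $\int(\rho-\rho_s)\nabla\Phi_s\cdot\dot u\,dx$ you propose absorbing $\epsilon\|\nabla\dot u\|_{L^2}^2$ into $B[0,T]$, but at the first ($\sigma^1$-weighted) level no $\|\nabla\dot u\|_{L^2}^2$ appears on the left-hand side and $B[0,T]$ only contains it with weight $\sigma^3$, so that absorption is not available where you need it. Both defects are repaired by the time-integration-by-parts device above.
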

\begin{proof}
Let $m\ge 0$ be a real number which will be determined later.
Multiplying $\eqref{NSP}_2 $ by $\sigma^m \dot{u}$   and then integrating the resulting equality over
$\Omega$ lead  to
\begin{align}\label{I0}
\int \sigma^m \rho|\dot{u}|^2dx &
= -\int\sigma^m \dot{u}\cdot\nabla (P-P_s)dx + (\lambda+2\mu)\int\sigma^m \nabla\div u\cdot\dot{u}dx \nonumber\\
&\quad - \mu\int\sigma^m \nabla\times\omega\cdot\dot{u}dx
+\int\sigma^m \rho\dot{u}\nabla(\Phi-\Phi_s)dx \nonumber\\
&\quad+\int\sigma^m(\rho-\rho_s)\dot{u}\nabla\Phi_sdx \triangleq \sum_{i=1}^5I_i.
\end{align}
We have to estimate $I_i (i=1,\cdots,5)$ one by one.
Firstly, By $\eqref{NSP}_1$, one can check that
\begin{align}\label{Pu1}
\displaystyle  (P-P_s)_t+u \cdot \nabla (P-P_s)+\gamma P\div u+u \cdot \nabla P_s=0,
\end{align}
A direct calculation by applying  $\eqref{Pu1}$ gives
\begin{align}\label{I10}
I_1 %=& -\int\sigma^m u_{t}\cdot\nabla(P-P_s)dx - \int\sigma^m u\cdot\nabla u\cdot\nabla (P-P_s)dx \nonumber\\      = & \left(\int\sigma^m(P-P_s)\,\div u\, dx\right)_{t} - m\sigma^{m-1}\sigma'\int(P-P_s)\,\div u\, dx \nonumber\\ &- \int \sigma^{m}\div u\, (P-P_s)_{t}dx - \int\sigma^{m}u\cdot\nabla u\cdot\nabla(P-P_s)dx\nonumber\\
= & \left(\int\sigma^m(P-P_s)\,\div u\, dx\right)_{t} - m\sigma^{m-1}\sigma'\int(P-P_s)\,\div u\,dx \nonumber\\
&- \int\sigma^{m} u \cdot \nabla P_s \div u\,dx + \int\sigma^{m}(P-P_s)\nabla u:\nabla u dx \nonumber\\
&+ \int\sigma^{m}(\gamma P- P+P_s)(\div u)^{2}dx - \int_{\partial\Omega}\sigma^{m}(P-P_s)u\cdot\nabla u\cdot n ds\nonumber\\
\leq & \left(\int\sigma^m(P-P_s)\,\div u\, dx\right)_{t} + C\sigma^{m} \|\nabla u \|_{L^2}^2+ Cm\sigma^{m-1}\sigma'\|\rho-\rho_s\|_{L^2}^2\nonumber\\
& +\int_{\partial\Omega}\sigma^{m}(P-P_s)u\cdot\nabla n\cdot u ds.
\end{align}
where in last term on the right-hand side of \eqref{I10} we have used
the fact \eqref{bdd1} on $\partial \Omega$ and  we conclude that
\begin{align}\label{bdt1}
\displaystyle \int_{\partial\Omega}\sigma^{m}(P-P_s)u\cdot\nabla n\cdot uds
&\leq C\int_{\partial\Omega}\sigma^{m}|u|^{2}ds \leq C\sigma^{m}\|\nabla u\|_{L^{2}}^{2}.
\end{align}
Hence,
\begin{align}\label{I1}
\displaystyle I_1 \leq \left(\int\sigma^m(P-P_s)\,\div u\, dx\right)_{t} + C\sigma^{m} \|\nabla u \|_{L^2}^2+ Cm\sigma^{m-1}\sigma'\|\rho-\rho_s\|_{L^2}^2.
\end{align}
Similarly, %by \eqref{bdd1}, it indicates that
\begin{align}\label{I20}
I_2 %& =  (\lambda+2\mu)\int\sigma^m \nabla\div u\cdot\dot{u}dx \nonumber\\
%& = (\lambda+2\mu)\int_{\partial\Omega}\sigma^m\div u\,(\dot{u}\cdot n)ds - (\lambda+2\mu)\int\sigma^m\div u\,\div \dot{u}dx  \nonumber\\
%& = (\lambda+2\mu)\int_{\partial\Omega}\sigma^m\div u\,(u\cdot\nabla u\cdot n)ds - \frac{\lambda+2\mu}{2}\left(\int\sigma^{m}(\div u)^{2}dx\right)_{t} \nonumber\\ &\quad - (\lambda+2\mu)\int\sigma^m\div u\,\div(u\cdot\nabla u)dx + \frac{m(\lambda+2\mu)}{2}\sigma^{m-1}\sigma'\int(\div u)^{2}dx \nonumber\\
& \leq (\lambda+2\mu)\int_{\partial\Omega}\sigma^m\div u\,(u\cdot\nabla u\cdot n)ds - \frac{\lambda+2\mu}{2}\left(\int\sigma^{m}(\div u)^{2}dx\right)_{t} \nonumber\\
&\quad C\sigma^{m} \|\nabla u \|_{L^3}^3+ Cm\sigma^{m-1}\sigma'\|\nabla u\|_{L^2}^2
\end{align}
For the first term on the righthand side of \eqref{I20}, by \eqref{flux} and Lemma \ref{lem-f-td}, we obtain
\begin{align}\label{I21}
& (\lambda+2\mu)\int_{\partial\Omega}\sigma^m\div u\,(u\cdot\nabla u\cdot n)ds \nonumber \\
= & -\int_{\partial\Omega}\sigma^m Fu\cdot\nabla n\cdot uds-\int_{\partial\Omega}\sigma^m(P-P_s)u\cdot\nabla n\cdot uds\nonumber \\
\leq & C\sigma^m(\|\nabla F\|_{L^{2}}\|u\|_{L^{4}}^{2}+\|F\|_{L^{6}}\|u\|_{L^{3}}\|\nabla u\|_{L^{2}}+\|F\|_{L^{2}}\| u\|^2_{L^{4}})+C\sigma^m\|\nabla u\|_{L^{2}}^2\nonumber\\
%\leq & C\sigma^m(\|\rho\dot{u}\|_{L^2}+\|\rho\nabla(\Phi-\Phi_s)\|_{L^2}+\|(\rho-\rho_s)\nabla\Phi_s\|_{L^2}+\|\nabla u\|_{L^2}+\|P-P_s\|_{L^2})\|\nabla u\|_{L^{2}}^2+C\sigma^m\|\nabla u\|_{L^{2}}^2\nonumber \\
\leq & \frac{1}{2}\sigma^m\|\rho\dot{u}\|_{L^2}^2+C\sigma^m\|\nabla u\|_{L^{2}}^2(\|\nabla u\|_{L^{2}}^2+1),
\end{align}
Therefore,
\begin{align}\label{I2}
\displaystyle  I_2 \leq &  - \frac{\lambda+2\mu}{2}\left(\int\sigma^{m}(\div u)^{2}dx\right)_t+C\sigma^{m}\|\nabla u\|_{L^{3}}^{3}+\frac{1}{4}\sigma^m\|\rho\dot{u}\|_{L^2}^2\nonumber \\
& +C\sigma^m(\|\nabla u\|_{L^{2}}^2+1)\|\nabla u\|_{L^{2}}^2.
\end{align}
Next, by \eqref{navier-b}, a straightforward computation shows that
\begin{align}\label{I3}
\displaystyle  I_3 %& = -\mu\int\sigma^{m}\nabla\times\omega\cdot\dot{u}dx \nonumber\\
%& =  - \mu\int\sigma^{m}\omega\cdot\curl\dot{u}dx \nonumber\\
& = -\frac{\mu}{2}\left(\int\sigma^{m}|\omega|^{2}dx\right)_t + \frac{\mu m}{2}\sigma^{m-1}\sigma'\int|\omega|^{2}dx- \mu\int\sigma^{m}\omega\cdot\curl(u\cdot\nabla u)dx \nonumber\\
%& = -\frac{\mu}{2}\left(\int\sigma^{m}|\omega|^{2}dx\right)_t + \frac{\mu m}{2}\sigma^{m-1}\sigma'\int|\omega|^{2}dx   \nonumber\\ & \quad - \mu\int\sigma^{m}(\nabla u^{i}\times\nabla_i u)\cdot\omega dx + \frac{\mu}{2}\int\sigma^{m}|\omega|^{2}\,\div udx\nonumber\\
& \leq -\frac{\mu}{2}\left(\int\sigma^{m}|\omega|^{2}dx\right)_t + C\sigma^{m}\|\nabla u\|_{L^{2}}^{2} + C\sigma^{m}\|\nabla u\|_{L^{3}}^{3}.
\end{align}
In view of \eqref{phi-hk} and \eqref{phi-dt2}, it follows that
\begin{align}\label{I4}
I_4%= & \int\sigma^m \rho u_t\cdot\nabla(\Phi-\Phi_s) dx+\int\sigma^m \rho u \cdot \nabla u \cdot \nabla(\Phi-\Phi_s)dx \nonumber \\
  = & \left(\int\sigma^m \rho u\cdot\nabla(\Phi-\Phi_s) dx\right)_t-m\sigma^{m-1}\sigma'\int \rho u\cdot\nabla(\Phi-\Phi_s)dx \nonumber \\
   &-\int\sigma^m \rho u\cdot \nabla (\Phi-\Phi_s)_t dx
     -\int\sigma^m \rho u\cdot \nabla^2 (\Phi-\Phi_s) \cdot u dx\nonumber \\
 \leq & \left(\int\sigma^m \rho u\cdot\nabla(\Phi-\Phi_s) dx\right)_t+Cm\sigma^{m-1}\sigma'\|\nabla (\Phi-\Phi_s)\|_{L^2}^2+C\sigma^m \|\nabla u\|_{L^2}^2 ,
\end{align}
and similarly,
\begin{align}\label{I5}
I_5 %= & \left(\int\sigma^m (\rho-\rho_s) u\cdot\nabla\Phi_s dx\right)_t-m\sigma^{m-1}\sigma'\int (\rho-\rho_s) u\cdot\nabla\Phi_s dx \nonumber \\      &-\int\sigma^m (\rho-\rho_s)_t u\cdot \nabla \Phi_s dx +\int\sigma^m (\rho-\rho_s) u \cdot \nabla u \cdot \nabla\Phi_s dx \nonumber \\  = & \left(\int\sigma^m (\rho-\rho_s) u\cdot\nabla\Phi_s dx\right)_t-m\sigma^{m-1}\sigma'\int (\rho-\rho_s) u\cdot\nabla\Phi_s dx \nonumber \\      &-\int\sigma^m (\rho-\rho_s) u\cdot \nabla^2 \Phi_s \cdot u dx +\int\sigma^m \div (\rho_s u)u \cdot \nabla\Phi_s dx \nonumber \\
 \leq & \left(\int\sigma^m (\rho-\rho_s) u\cdot\nabla\Phi_s dx\right)_t+Cm\sigma^{m-1}\sigma'\|\rho-\rho_s\|_{L^2}^2+C\sigma^m \|\nabla u\|_{L^2}^2,
\end{align}
Making use of the results \eqref{I1}, \eqref{I2},\eqref{I3} and \eqref{I4}, it follows from $\eqref{I0}$ that
\begin{align}\label{I01}
&\left((\lambda+2\mu)\int\sigma^{m}(\div u)^{2}dx+\mu\int\sigma^{m}|\omega|^{2}dx\right)_{t}+\int\sigma^{m}\rho|\dot{u}|^{2}dx \nonumber \\
\leq &\left(\int\sigma^{m}\Big((P-P_s)\,\div u+(\rho-\rho_s) u \cdot\nabla\Phi_s+\rho u \cdot\nabla(\Phi-\Phi_s)\Big)dx\right)_{t}+C\sigma^{m}\|\nabla u\|_{L^{3}}^{3}\nonumber \\
     & +C\sigma^{m}\|\nabla u\|_{L^2}^2(1+\|\nabla u\|_{L^2}^2)+Cm\sigma^{m-1}\sigma'(\|\nabla (\Phi-\Phi_s)\|_{L^2}^{2}+\|\rho-\rho_s\|_{L^2}^2),
\end{align}
integrating over $(0,T]$, choosing $m=1$, by \eqref{tdu1}, \eqref{key1}, Lemma \ref{lem-basic} and Young's inequality, we conclude that %for any $m>0$,
%\begin{align}\label{I02}\displaystyle  &\sigma^{m}\|\nabla u\|_{L^{2}}^{2}+\int_0^T\int\sigma^{m}\rho|\dot{u}|^{2}dxdt \nonumber \\\leq & C\int_0^T m\sigma^{m-1}\sigma'(\|\nabla (\Phi-\Phi_s)\|_{L^2}^{2}+\|\rho-\rho_s\|_{L^2}^2) dt\nonumber \\     &+C\int_0^T\sigma^{m}\|\nabla u\|_{L^{2}}^2(\|\nabla u\|_{L^{2}}^2+1)dt+C\int_0^T\sigma^{m}\|\nabla u\|_{L^{3}}^{3}dt.\end{align}
%Choose $m=1,$ together with \eqref{key1} and \eqref{basic1}, we obtain
\begin{align}\label{a01}
 \displaystyle  \sup_{0\leq t<T}\sigma\|\nabla u\|_{L^{2}}^{2}+\int_0^T \sigma\|\sqrt{\rho}\dot{u}\|_{L^2}^{2}dt\le  C C_0 + C\int_0^{T}\sigma\|\nabla u\|_{L^3}^3 dt.
 \end{align}

Next, operating $ \sigma^{m}\dot{u}^{j}[\pa/\pa t+\div
(u\cdot)] $ to $ \eqref{NSP-2}^j,$ summing with respect to $j$, and integrating over $\Omega,$ together with $ \eqref{NSP}_1 $, we get
\begin{align}\label{J0}
\displaystyle  &\left(\frac{\sigma^{m}}{2}\int\rho|\dot{u}|^{2}dx\right)_t-\frac{m}{2}\sigma^{m-1}\sigma'\int\rho|\dot{u}|^{2}dx \nonumber \\
& = \int\sigma^{m}(\dot{u}\cdot\nabla F_t+\dot{u}^{j}\,\div(u\partial_jF))dx \nonumber \\
&\quad+\mu\int\sigma^{m}(-\dot{u}\cdot\nabla\times\omega_t-\dot{u}^{j}\div((\nabla\times\omega)^j\,u))dx \nonumber \\
&\quad+\int\sigma^{m}(\dot{u}\cdot\nabla\Phi_s(\rho-\rho_s)_t+\dot{u}^{j}\div((\rho-\rho_s)\partial_j \Phi_s\,u))dx \nonumber \\
&\quad+\int\sigma^{m}(\dot{u}\cdot(\rho \nabla(\Phi-\Phi_s))_t+\dot{u}^{j}\div(\rho \partial_j(\Phi-\Phi_s)\,u))dx %\nonumber \\&
\triangleq \sum_{i=1}^4 J_i.
\end{align}
Let us estimate $J_1, J_2, J_3$ and $J_4$.
By \eqref{navier-b} and \eqref{Pu1}, a direct computation yields
\begin{align}\label{J10}
J_1 %& =\int\sigma^{m}\dot{u}\cdot\nabla F_tdx+\int\sigma^{m}\dot{u}^{j}\div(u\partial_jF)dx \nonumber \\
%& = \int_{\partial\Omega}\sigma^{m}F_t\dot{u}\cdot nds-\int\sigma^{m}F_t\,\div\dot{u}dx-\int\sigma^{m}u \cdot \nabla\dot{u}^j\partial_jFdx \nonumber \\
& = \int_{\partial\Omega}\sigma^{m}F_t\dot{u}\cdot nds - (\lambda+2\mu)\int\sigma^{m}(\div\dot{u})^{2}dx + (\lambda+2\mu)\int\sigma^{m}\div\dot{u}\,\nabla u:\nabla udx \nonumber\\
& \quad -\gamma\int\sigma^{m} P\div\dot{u}\,\div udx+\int\sigma^{m}\div \dot{u}\, u \cdot \nabla F dx-\int\sigma^{m}u \cdot\nabla\dot{u} \cdot \nabla F dx \nonumber \\
 &\quad -\int\sigma^{m}\div\dot{u}\,u \cdot \nabla P_s dx \nonumber\\
& \leq \int_{\partial\Omega}\sigma^{m}F_t\dot{u}\cdot nds - (\lambda+2\mu)\int\sigma^{m}(\div\dot{u})^{2}dx + \frac{\delta}{12}\sigma^{m}\|\nabla\dot{u}\|\ltwo+C\sigma^m \|\nabla u\|^4_{L^4}\nonumber\\
& \quad+C\sigma^m \big(\|\nabla u\|_{L^2}^2 \|\nabla F\|_{L^3}^2+ \|\nabla u\|_{L^2}^2\big)
\end{align}
where in the first equality we have used
\begin{align*}
\displaystyle F_t %&=(2\mu+\lm)\div u_t-(P-P_s)_t\\ &=(2\mu+\lm)\div\dot  u-(2\mu+\lm)\div(u\cdot\na u) +u\cdot\na (P-P_s)+\ga P\div u+u \cdot \nabla P_s\\
&=(2\mu+\lm)\div\dot  u-(2\mu+\lm)\na u:\na u  - u\cdot\na F+\ga P\div u+u \cdot \nabla P_s.
\end{align*}
%It is necessary to estimate the three boundary terms in the last equality and we will use \eqref{key1}, Lemma \ref{lem-f-td}, Lemma \ref{lem-ud}, Sobolev trace theorem and Young's, H\"{o}lder's, Poincar\'{e}'s, Sobolev's inequalities.Denote $h\triangleq u\cdot(\nabla n+\nabla n^{*})$ and $u^\bot=-u\times n$.
For the first term on the righthand side of \eqref{J10}, we have
\begin{align}\label{J11}
&\int_{\partial\Omega}\sigma^{m}F_t\dot{u}\cdot nds=-\int_{\partial\Omega}\sigma^{m}F_t\,(u\cdot\nabla n\cdot u)ds \nonumber\\
= & -\left(\int_{\partial\Omega}\sigma^{m}(u\cdot\nabla n\cdot u)Fds\right)_t+m\sigma^{m-1}\sigma'\int_{\partial\Omega}(u\cdot\nabla n\cdot u)Fds \nonumber\\
&\quad +\int_{\partial\Omega}\sigma^{m}\big(F\dot{u}\cdot\nabla n \cdot u+Fu\cdot\nabla n \cdot\dot{u}\big)ds \nonumber \\
&\quad  -\int_{\partial\Omega}\sigma^{m}\big(F(u \cdot \nabla) u\cdot\nabla n \cdot u+Fu\cdot\nabla n \cdot(u \cdot \nabla) u\big)ds\nonumber\\
\leq& -\left(\int_{\partial\Omega}\sigma^{m}(u\cdot\nabla n\cdot u)Fds\right)_t+Cm\sigma^{m-1}\sigma'\|\nabla u\|_{L^2}^{2}\|F\|_{H^1}+\frac{\delta}{12}\sigma^{m}\|\nabla\dot{u}\|_{L^2}^2 \nonumber\\
&+C\sigma^{m}\|\nabla u\|_{L^2}^{4}+\sigma^{m}\|\nabla u\|_{L^2}^{2}\|F\|_{H^1}^{2} +C\sigma^{m}\|\na F\|_{L^6} \|\na u\|^3_{L^2}+ C\sigma^{m}\|\na u\|^2_{L^4}.
\end{align}
Combining these estimates of the boundary terms with \eqref{J10}, %we have
%\begin{align}\label{J14}& J_1 \leq - (\lambda+2\mu)\int\sigma^{m}(\div\dot{u})^{2}dx -\left(\int_{\partial\Omega}\sigma^{m}(u\cdot\nabla n\cdot u)Fds\right)_t\nonumber\\&\quad+Cm\sigma^{m-1}\sigma'\|\nabla u\|_{L^2}^{2}\|F\|_{H^1} +\frac{\delta}{6}\sigma^{m}\|\nabla\dot{u}\|_{L^2}^2+C\sigma^{m}\|\nabla u\|_{L^4}^{4}+C\sigma^{m}\|\nabla u\|_{L^2}^{4}\nonumber\\&\quad +C\sigma^{m}(\|\nabla u\|_{L^2}^{2}\|F\|_{H^1}^{2}+\|\na F\|_{L^6} \|\na u\|^3_{L^2}+\|\nabla u\|_{L^2}^2 \|\nabla F\|_{L^3}^2+\|\nabla u\|_{L^2}^2).\end{align}
%Then, Lemma \ref{lem-f-td} and Lemma \ref{lem-ud} yield that
%\begin{align} \displaystyle \|F\|_{H^1}\leq C(\|\rho \dot{u}\|_{L^2}+\|\nabla(\Phi-\Phi_s)\|_{L^2}+\|\nabla u\|_{L^2}+\|\rho-\rho_s\|_{L^2}),\end{align}
%\begin{align}\displaystyle \|\nabla F\|_{L^6}\leq C(\|\rho\dot{u}\|_{L^6}+\|\nabla(\Phi-\Phi_s)\|_{L^6}+\|\rho-\rho_s\|_{L^6}),\end{align}
%which together with
using \eqref{tdf1},\eqref{udot} and \eqref{tdudot} gives
\begin{align}\label{J1}
& J_1 \leq - (\lambda+2\mu)\int\sigma^{m}(\div\dot{u})^{2}dx -\left(\int_{\partial\Omega}\sigma^{m}(u\cdot\nabla n\cdot u)Fds\right)_t\nonumber\\
& \quad + \frac{\delta}{4}\sigma^{m}\|\nabla\dot{u}\|\ltwo+C\sigma^m \|\nabla u\|^4_{L^4}+C\sigma^{m}\|\sqrt{\rho}\dot{u}\|_{L^2}^{2}(\|\nabla u\|_{L^2}^{2}+\|\nabla u\|_{L^2}^{4})\nonumber\\
&\quad +C\sigma^m\|\nabla u\|_{L^2}^2(1+\|\nabla u\|_{L^2}^2+\|\nabla u\|_{L^2}^4)\nonumber\\
&\quad+Cm\sigma^{m-1}\sigma'(\|\sqrt{\rho}\dot{u}\|_{L^2}^{2}+\|\nabla u\|_{L^2}^{2}+\|\nabla u\|_{L^2}^{4}).
\end{align}
Next, by $ \omega_t=\curl \dot u-u\cdot \na \omega-\na u^i\times \pa_iu$ and a straightforward calculation leads to
\begin{align}\label{J20}
J_2%&=-\mu\int\sigma^{m}\dot{u}\cdot(\nabla\times\omega_t)dx-\mu\int\sigma^{m}\dot{u}\cdot(\nabla\times\omega)\,\div udx\nonumber\\&\quad-\mu\int\sigma^{m} u^{i}\dot{u}\cdot\nabla\times(\nabla_i\omega)dx  \nonumber\\
&=-\mu\int\sigma^{m}|\curl \dot{u}|^{2}dx+\mu\int\sigma^{m}(\omega\times\nabla u^i)\cdot\nabla_i\dot{u}dx \nonumber\\
&\quad-\mu\int\sigma^{m}\curl\dot{u}\cdot(\nabla u^i\times\nabla_i u)dx-\mu\int\sigma^{m}\div u\,\omega\cdot\curl \dot{u}dx\nonumber\\
&\leq -\mu\int\sigma^{m}|\curl \dot{u}|^{2}dx+\frac{\delta}{4}\sigma^{m}\|\nabla\dot{u}\|_{L^2}^2+C\sigma^{m}\|\nabla u\|_{L^4}^4.
\end{align}
In view of \eqref{h3-s} and \eqref{udot}, we obtain that
\begin{align}\label{J30}
\displaystyle J_3 &=\int\sigma^{m}((\rho-\rho_s)u \cdot \nabla^2\Phi_s \cdot\dot{u}dx-\int\sigma^{m}\dot{u}\cdot \nabla\Phi_s\div(\rho_s u)dx \nonumber \\
& \leq C\sigma^{m}(\|\dot{u}\|_{L^6} \|\rho-\rho_s\|_{L^2} \|\nabla u\|_{L^2}+\|\nabla\dot{u}\|_{L^2}\|\nabla u\|_{L^2})\|\nabla\Phi_s\|_{H^2} \nonumber \\
%& \quad + C\sigma^{m}(\|\sqrt{\rho}\dot{u}\|_{L^2} \|\nabla(\Phi-\Phi_s)_t\|_{L^2}+\|\nabla^2(\Phi-\Phi_s)\|_{L^2}\|\sqrt{\rho} u\|_{L^3}\|\dot{u}\|_{L^6}) \nonumber \\
& \leq \frac{\delta}{4}\sigma^{m}\|\nabla\dot{u}\|_{L^2}^2%+C\sigma^m\|\sqrt{\rho}\dot{u}\|_{L^2}^2
+C\sigma^m\|\nabla u\|_{L^2}^2+C\sigma^m\|\nabla u\|_{L^2}^4,
\end{align}
and similarly, by \eqref{udot}, \eqref{phi-hk}, \eqref{phi-dt2} and \eqref{phi-dt}, we have
\begin{align}\label{J40}
\displaystyle J_4 &=\int\sigma^{m}\rho\dot{u}\cdot \nabla(\Phi-\Phi_s)_t dx+   \int\sigma^{m}\rho u\cdot \nabla^2(\Phi-\Phi_s)\cdot\dot{u}dx \nonumber \\
& \leq C\sigma^{m}(\|\sqrt{\rho}\dot{u}\|_{L^2} \|\nabla(\Phi-\Phi_s)_t\|_{L^2}+\|\nabla^2(\Phi-\Phi_s)\|_{L^2}\|\sqrt{\rho} u\|_{L^3}\|\dot{u}\|_{L^6}) \nonumber \\
& \leq \frac{\delta}{4}\sigma^{m}\|\nabla\dot{u}\|_{L^2}^2+C\sigma^m(\|\sqrt{\rho}\dot{u}\|_{L^2}^2 +\|\nabla u\|_{L^2}^2).\end{align}
Combining \eqref{J1}, \eqref{J20}, \eqref{J30} and \eqref{J40} with \eqref{J0},  by \eqref{tdudot} and  choosing $\delta$ small enough, we have
\begin{align}\label{J02}
&\quad \left(\sigma^{m}\|\sqrt{\rho}\dot{u}\|_{L^2}^2\right)_t+\sigma^{m}\|\nabla\dot{u}\|_{L^2}^2\nonumber\\
&\leq -\left(\int_{\partial\Omega}\sigma^{m}(u\cdot\nabla n\cdot u)Fds\right)_t+
C\sigma^{m}\|\nabla u\|_{L^4}^4 +C\sigma^{m}\|\sqrt{\rho}\dot{u}\|_{L^2}^{2}\nonumber\\
& \quad +C\sigma^{m}\|\sqrt{\rho}\dot{u}\|_{L^2}^{2}(\|\nabla u\|_{L^2}^{2}+\|\nabla u\|_{L^2}^{4})+C\sigma^m\|\nabla u\|_{L^2}^2(1+\|\nabla u\|_{L^2}^2+\|\nabla u\|_{L^2}^4)\nonumber\\
&\quad+Cm\sigma^{m-1}\sigma'(\|\sqrt{\rho}\dot{u}\|_{L^2}^{2}+\|\nabla u\|_{L^2}^{2}+\|\nabla u\|_{L^2}^{4}).
\end{align}
For the boundary term in the right-hand side of \eqref{J02}, from Lemma \ref{lem-f-td}, we have
\begin{align}\label{J0b1}
\int_{\partial\Omega}(u\cdot\nabla n\cdot u)Fds\leq C\|\nabla u\|_{L^2}^{2}\|F\|_{H^1}\leq\frac{1}{2}\|\sqrt{\rho}\dot{u}\|_{L^2}^2+C(\|\nabla u\|_{L^2}^2+\|\nabla u\|_{L^2}^4).
\end{align}
Now integrating  \eqref{J02} with  $m=3$ over $(0,T)$, using \eqref{key1}, \eqref{a01} and \eqref{J0b1}, yields
\begin{align}\label{a02}
\sup_{0\leq t\leq T}\sigma^{3}\|\sqrt{\rho}\dot{u}\|_{L^2}^2\!+\!\int_0^{T}\!\!\sigma^{3} \|\nabla \dot{u}\|_{L^2}^2dt
    \le   C C_0\!+\! C\int_0^{T}\!\!\big(\sigma\|\sqrt{\rho}\dot{u}\|_{L^2}^2\!+\!\sigma^3 \|\nabla u\|_{L^4}^4\big)dt.
\end{align}
Combining \eqref{a01} with \eqref{a02}, we immediately obtain \eqref{a00}
and complete the proof of Lemma \ref{lem-a0}.
\end{proof}

\begin{lemma}\label{lem-a1} Assume that $(\rho,u,\Phi)$ is a smooth solution of
 \eqref{NSP}-\eqref{boundary} satisfying \eqref{key1} and the initial data condition \eqref{dt2}, then there exist  positive constants $C$ and $\varepsilon_1$ depending only on $\mu,\,\,\lambda,\, \,\gamma,\,\,a,\,\,\rho_s,\,\,\hat{\rho},\,\,\beta,\,\,\Omega$ and $M$ such that
\begin{align}
& \sup_{0\le t\le \si(T)}t^{1-s}\|\nabla u\|_{L^2}^2+\int_0^{\si(T)}t^{1-s}\int\rho|\dot{u}|^2 dxdt\le C(\hat{\rho},M),\label{uv1}\\
&\sup_{0\le t\le \si(T)}t^{2-s}\int
 \rho|\dot{u}|^2 dx + \int_0^{\si(T)}\int t^{2-s}|\nabla\dot{u}|^2dxdt\leq C(\hat{\rho},M),\label{uv2}
\end{align}
provide that $C_0<\varepsilon_1$.
\end{lemma}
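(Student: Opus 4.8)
The plan is to reprove, on the short interval $(0,\sigma(T))\subset(0,1]$, the two energy computations from the proof of Lemma \ref{lem-a0}, but with the powers of $\sigma$ replaced by the more singular time weights $t^{1-s}$ (in place of $\sigma^{1}$) and $t^{2-s}$ (in place of $\sigma^{3}$); here $s=s(\beta)$ is the exponent in the statement, and what matters is that $0<s<\beta\le 1$. First I would take $m=1-s$ in the identity \eqref{I0} and run the estimates of $I_{1},\dots,I_{5}$ exactly as there. Because $m=1-s>0$, every time-derivative term $(\cdots)_{t}$ contributes nothing when integrated down to $t=0$: the weight $\sigma^{m}$ vanishes there while the quantities it multiplies — such as $\|\div u_{0}\|_{L^{2}}^{2}$ and $\|\sqrt{\rho_{0}}\dot u(0)\|_{L^{2}}^{2}$ — are finite by \eqref{dt1} and the compatibility condition \eqref{dt3}. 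The one genuinely new difficulty compared with Lemma \ref{lem-a0} is that the coefficient $m\sigma^{m-1}\sigma'$, which was bounded there (since $m\ge 1$), now equals $(1-s)t^{-s}$ on $\{t<1\}$ and is singular at $t=0$.

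Among the quantities this singular coefficient multiplies, those of the form $\|\rho-\rho_{s}\|_{L^{2}}^{2}$ and $\|\nabla(\Phi-\Phi_{s})\|_{L^{2}}^{2}$ are $\le CC_{0}$ by Lemma \ref{lem-basic}, and $\int_{0}^{\sigma(T)}t^{-s}\,dt\le\frac{1}{1-s}$, so they contribute only $CC_{0}$. The one genuinely new object is
\[ \int_{0}^{\sigma(T)}t^{-s}\,\|\nabla u\|_{L^{2}}^{2}\,dt . \]
The basic energy estimate only gives $\int_{0}^{T}\|\nabla u\|_{L^{2}}^{2}\,dt\le CC_{0}$, and $t^{-s}$ is unbounded near $t=0$, so this term cannot be absorbed into $CC_{0}$; controlling it is the heart of the lemma, and the single place where the hypothesis $u_{0}\in H^{\beta}$ with $\beta\in(\frac{1}{2},1]$ is used. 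This is also why the final constant is $C(\hat{\rho},M)$ rather than $CC_{0}$: near $t=0$ one cannot expect smallness, only boundedness in terms of $\hat{\rho}$ and $M=\|u_{0}\|_{H^{\beta}}$.

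To bound this term I would argue as in \cite{cl2019}, via an initial-layer (short-time) estimate built on $u_{0}\in H^{\beta}$. Since $\beta>\frac{1}{2}$, the $3$D embedding $H^{\beta}\hookrightarrow L^{3}$ gives $\int\rho_{0}|u_{0}|^{3}\,dx\le\|\rho_{0}\|_{L^{\infty}}\|u_{0}\|_{L^{3}}^{3}\le C(\hat{\rho},M)$, consistent with $A_{2}(\sigma(T))\le 2C_{0}^{\delta_{0}}$ in \eqref{key1}; more importantly, one propagates $\|u(t)\|_{H^{\beta}}\le C(\hat{\rho},M)$ on a short time interval by a continuity argument based on the local solution of Lemma \ref{lem-local}, and, interpolating $\|\nabla u\|_{L^{2}}\le C\|u\|_{H^{\beta}}^{\theta}\|\nabla^{2}u\|_{L^{2}}^{1-\theta}$ for a suitable $\theta=\theta(\beta)\in(0,1)$ together with the second-order elliptic bound for $u$ read off from \eqref{NSP-2} via Lemma \ref{lem-f-td} and \eqref{tdu2} (which brings in $\|\sqrt{\rho}\dot u\|_{L^{2}}$), one obtains an initial-layer bound $\|\nabla u(t)\|_{L^{2}}^{2}\le C(\hat{\rho},M)\,t^{-\tau_{0}}$ for $0<t\le\sigma(T)$, for some $\tau_{0}<1-s$ (depending on $\beta$; in the model heat-equation case $\tau_{0}=1-\beta$). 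Since $\tau_{0}<1-s$ this yields
\[ \int_{0}^{\sigma(T)}t^{-s}\|\nabla u\|_{L^{2}}^{2}\,dt\le C(\hat{\rho},M)\int_{0}^{\sigma(T)}t^{-s-\tau_{0}}\,dt\le C(\hat{\rho},M). \]
I expect this initial-layer step — where the vacuum ($\inf\rho=0$), the slip boundary condition, and the limited ($H^{\beta}$, not $H^{1}$) regularity of $u_{0}$ interact — to be the main obstacle.

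With the above in hand, integrating the weighted version of \eqref{I01} (with $m=1-s$) over $(0,\sigma(T))$ and using \eqref{tdu1}, Lemma \ref{lem-basic}, \eqref{key1} and Young's inequality yields
\[ \sup_{0\le t\le\sigma(T)}t^{1-s}\|\nabla u\|_{L^{2}}^{2}+\int_{0}^{\sigma(T)}t^{1-s}\|\sqrt{\rho}\dot u\|_{L^{2}}^{2}\,dt\le C(\hat{\rho},M)+C\int_{0}^{\sigma(T)}t^{1-s}\|\nabla u\|_{L^{3}}^{3}\,dt , \]
and the cubic term is absorbed exactly as in Lemma \ref{lem-a0}: by \eqref{f-lp}, \eqref{xdu1} and \eqref{tdu2}, $\|\nabla u\|_{L^{3}}^{3}\le\varepsilon\|\sqrt{\rho}\dot u\|_{L^{2}}^{2}+C(\text{lower-order terms})$, and by \eqref{key1} the lower-order terms carry positive powers of $C_{0}$, so choosing $\varepsilon_{1}$ small lets everything be absorbed into the left-hand side; this proves \eqref{uv1}. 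Finally, for \eqref{uv2} I would repeat the second computation of Lemma \ref{lem-a0}, applying the operator $\dot u^{j}[\partial_{t}+\div(u\cdot\,)]$ to the $j$-th component of \eqref{NSP-2}, multiplying by $\sigma^{m}$ with $m=2-s$, and summing over $j$: since $2-s\ge 1$ the coefficient $m\sigma^{m-1}\sigma'=(2-s)t^{1-s}$ is bounded near $t=0$ and no new singularity appears, the boundary terms and $J_{1},\dots,J_{4}$ are estimated verbatim as in \eqref{J1}--\eqref{J40} and \eqref{J0b1}, and integrating the resulting analogue of \eqref{J02} over $(0,\sigma(T))$ while feeding in \eqref{uv1} produces \eqref{uv2}. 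Throughout, $\varepsilon_{1}$ is chosen small enough that all terms carrying positive powers of $C_{0}$ (equivalently, controlled through $A_{1}(T)\le 2C_{0}^{1/2}$ and $A_{2}(\sigma(T))\le 2C_{0}^{\delta_{0}}$) can be absorbed.
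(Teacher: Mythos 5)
Your reduction of \eqref{uv2} to \eqref{uv1} (taking $m=2-s$ in \eqref{J02} and integrating over $(0,\sigma(T)]$) is exactly what the paper does and is fine. The problem is \eqref{uv1}. You correctly isolate the crux — the singular term $\int_0^{\sigma(T)}t^{-s}\|\nabla u\|_{L^2}^2\,dt$ produced by $m\sigma^{m-1}\sigma'$ when $m=1-s<1$ — but the fix you propose is a genuine gap, not a routine step. First, the claimed propagation of $\|u(t)\|_{H^{\beta}}\le C(\hat\rho,M)$ "by a continuity argument" is not available here: in the presence of vacuum the momentum equation gives no energy identity for $u$ itself (only for $\sqrt{\rho}u$), so there is no mechanism to carry a fractional Sobolev norm of $u$ forward in time with a constant depending only on $M$ and $\hat\rho$. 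Second, the interpolation $\|\nabla u\|_{L^2}\le C\|u\|_{H^\beta}^{\theta}\|\nabla^2u\|_{L^2}^{1-\theta}$ needs $\|\nabla^2u\|_{L^2}$, and by \eqref{2tdu} that is controlled through $\|\rho\dot u\|_{L^2}$ — precisely the quantity whose short-time behaviour the lemma is meant to establish; the argument is circular. Third, even granting the model rate $\tau_0=1-\beta$, your own criterion $\tau_0<1-s$ fails at the relevant borderline $s=\beta$ (which is the value needed downstream, cf. the exponents $\delta_0=\frac{2\beta-1}{4\beta}$ and the weights $t^{1-\beta}$, $t^{2-\beta}$ used in \eqref{ba34} and \eqref{bu2}), so the strategy of proving pointwise decay and then integrating against $t^{-s}$ cannot close.

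The paper avoids the singular term altogether by a different device: it decomposes $u=w_1+w_2+w_3$, where $Lw_1=0$ with data $w_{10}=u_0$, and $Lw_2=-\nabla(P-P_s)$, $Lw_3=\rho\nabla(\Phi-\Phi_s)+(\rho-\rho_s)\nabla\Phi_s$ with zero data ($L f=\rho\dot f-\mu\Delta f-(\lambda+\mu)\nabla\div f$). For the homogeneous part one proves two energy estimates — an unweighted one controlled by $\|\nabla w_{10}\|_{L^2}^2$ and a $t$-weighted one controlled by $\|w_{10}\|_{L^2}^2$ — and, since the solution map $w_{10}\mapsto w_1(\cdot,t)$ is \emph{linear}, the Stein--Weiss interpolation theorem yields $\sup_t t^{1-\theta}\|\nabla w_1\|_{L^2}^2+\int t^{1-\theta}\int\rho|\dot w_1|^2\le C\|w_{10}\|_{H^\theta}^2$ for $\theta\in[s,1]$. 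This is the only place the hypothesis $u_0\in H^\beta$ enters, and it requires no propagation of fractional regularity for the nonlinear solution. The parts $w_2,w_3$ start from zero and are handled by direct (unweighted) energy estimates using \eqref{key1} and the smallness of $C_0$. If you want to salvage a direct weighted-energy proof you would have to reproduce this interpolation in some form; as written, the initial-layer bound is asserted rather than proved, and that is where your argument breaks down.
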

\begin{proof} Suppose $w_1(x,t)$, $w_2(x,t)$ and $w_3(x,t)$ solve problems
\begin{equation}\label{fcz1}
\displaystyle \begin{cases}
  Lw_1=0 \,\, &in \,\,\Omega,\\  w_1(x,0)=w_{10}(x)\,\, &in \,\,\Omega,\\w_1\cdot n=0\,\,\text{and}\,\, \curl w_1\times n=0 \,\,&on \,\,\partial\Omega ,
\end{cases}
\end{equation}
\begin{equation}\label{fcz2}
\displaystyle  \begin{cases}
  Lw_2=-\nabla(P-P_s)\,\, &in \,\,\Omega,\\  w_2(x,0)=0\,\, &in \,\,\Omega,\\w_2\cdot n=0\,\,\text{and}\,\, \curl w_2\times n=0 \,\,&on \,\,\partial\Omega ,
\end{cases}
\end{equation}
and
\begin{equation}\label{fcz3}
\displaystyle  \begin{cases}
  Lw_3=\rho \nabla (\Phi-\Phi_s)+(\rho-\rho_s)\nabla\Phi_s\,\, &in \,\,\Omega,\\  w_3(x,0)=0\,\, &in \,\,\Omega,\\w_3\cdot n=0\,\,\text{and}\,\, \curl w_3\times n=0 \,\,&on \,\,\partial\Omega ,
  \end{cases}
\end{equation}
where $Lf\triangleq\rho\dot{f}-\mu\Delta f-(\lambda+\mu)\nabla\div f $.

A similar way as for the proof of \eqref{basic1} shows that
\begin{align}\label{bw1}
\displaystyle  \sup_{0\le t\le \si(T)}\int\rho|w_1|^2dx+\int_0^{\si(T)}\int|\nabla w_1|^2dxdt\le
C\int|w_{10}|^{2}dx  ,
\end{align}
\begin{align}\label{bw2}
\displaystyle  \sup_{0\le t\le \si(T)}\int\rho|w_2|^2dx+\int_0^{\si(T)}\int|\nabla w_2|^2dxdt%\le\sup_{0\le t\le \si(T)}\|P-P_s\|_{L^2}^2
\le CC_0 ,
\end{align}
and
\begin{align}\label{bw3}
\displaystyle & \sup_{0\le t\le \si(T)}\int\rho|w_3|^2dx+\int_0^{\si(T)}\int|\nabla w_3|^2dxdt %\nonumber\\\le&\sup_{0\le t\le \si(T)}(\|\nabla(\Phi-\Phi_s)\|_{L^2}^2+\|\rho-\rho_s\|_{L^2}^2)
 \le C C_0.
\end{align}

Multiplying \eqref{fcz1} by $w_{1t}$ and integrating over $\Omega,$ by \eqref{key1},  Sobolev's and Young's inequalities, we obtain
\begin{align}\label{bw11}
\displaystyle  &\quad \left(\frac{\lambda+2\mu}{2}\int(\div w_1)^{2}dx + \frac{\mu}{2}\int|\curl w_1|^{2}dx\right)_t + \int\rho|\dot{w}_1|^{2}dx \nonumber \\
 %& =\int\rho\dot{w}_1\cdot(u\cdot\nabla w_1)dx \nonumber \\
 &\le C\|\sqrt{\rho}\dot{w}_1\|_{L^{2}}\|\rho^{\frac{1}{3}}u\|_{L^{3}}\|\nabla w_1\|_{L^{6}} %\nonumber \\ &
 \le C_1C_0^{\frac{\delta_0}{3}}(\|\sqrt{\rho}\dot{w}_1\|_{L^{2}}^{2}+ \|\nabla w_1\|_{L^{2}}^{2}),
\end{align}
since it follows from \eqref{fcz1}, with the similar methods to obtain \eqref{tdf2} as in Lemma \eqref{lem-f-td} that
\begin{equation}\label{xbh1}
\displaystyle  \|\nabla w_1\|_{L^{6}}\leq C\| w_1\|_{W^{2,2}}\leq C(\|\rho\dot{w}_1\|_{L^{2}}+\|\nabla w_1\|_{L^{2}}).
\end{equation}
Together \eqref{bw11} with \eqref{bw1}, and by Gronwall's inequality and Lemma \ref{lem-vn}, it yields
\begin{align}\label{bw12}
\displaystyle  \sup_{0\le t\le \si(T)}\|\nabla w_1\|_{L^{2}}^{2}+\int_0^{\sigma(T)}\int\rho|\dot{w}_1|^{2}dxdt\leq C\|\nabla w_{10}\|_{L^{2}}^{2},
\end{align}
and
\begin{align}\label{bw13}
\displaystyle  \sup_{0\le t\le \si(T)}t\|\nabla w_1\|_{L^{2}}^{2}+\int_0^{\sigma(T)}t\int\rho|\dot{w}_1|^{2}dxdt\leq C\|w_{10}\|_{L^{2}}^{2},
\end{align}
provided $C_0<\hat{\varepsilon}_1\triangleq(2C_1)^{-\frac{3}{\delta_0}}$.
Since the solution operator $w_{10}\mapsto w_1(\cdot,t)$ is linear, by the   standard Stein-Weiss interpolation argument \cite{bl1976}, one can deduce from \eqref{bw12} and \eqref{bw13} that for any $\theta\in [s,1],$
\begin{align}\label{bw14}
\displaystyle  \sup_{0\le t\le\si(T)}t^{1-\theta}\|\nabla
w_1\|_{L^2}^2+\int_0^{\si(T)}t^{1-\theta}\int\n|\dot
{w_1}|^2dxdt\leq C\| w_{10}\|_{H^\theta}^2,
\end{align}
with a uniform constant $C$ independent of $\theta.$

Multiplying \eqref{fcz2} by $w_{2t}$ and integrating over $\Omega$ give that
\begin{align}\label{bw21}
&\quad \left(\frac{\lambda+2\mu}{2}\int(\div w_2)^{2}dx+\frac{\mu}{2}\int|\curl w_2|^{2}dx-\int(P-P_s)\div w_2dx\right)_t+\int\rho|\dot{w}_2|^{2}dx \nonumber\\
%& =\int\rho\dot{w}_2\cdot(u\cdot\nabla w_2)dx-\int P_t\div w_2dx \nonumber\\
& =\int\rho\dot{w}_2\cdot(u\cdot\nabla w_2)dx - \frac{1}{\lambda+2\mu}\int(P-P_s)(F_{w_2}\div u+\nabla F_{w_2}\cdot u ) dx\\
& \quad - \frac{1}{2(\lambda+2\mu)}\int(P-P_s)^{2}\div u dx +\gamma\int P\div u\,\div w_2dx +\int u \cdot \nabla P_s\,\div w_2dx\nonumber\\
& \leq C(\|\sqrt{\rho}\dot{w}_2\|_{L^2}\|\rho^{\frac{1}{3}}u\|_{L^3}\|\nabla w_2\|_{L^6}+\|\nabla u\|_{L^2}\|F_{w_2}\|_{L^2}+\|\nabla F_{w_2}\|_{L^2}\|u\|_{L^2})\nonumber\\
&\quad+C(\|P-P_s\|_{L^2}\|\nabla u\|_{L^2}+\|\nabla u\|_{L^2}\|\nabla w_2\|_{L^2})\nonumber\\
%&\leq CC_0^{\frac{\delta_0}{3}}\|\sqrt{\rho}\dot{w}_2\|_{L^2}(\|\sqrt{\rho}\dot{w}_2\|_{L^2}+\|\nabla w_2\|_{L^2}+\|P-P_s\|_{L^2}+\|P-P_s\|_{L^6})\nonumber\\&   \quad +C\|\nabla u\|_{L^2}(\|\sqrt{\rho}\dot{w}_2\|_{L^2}+\|\nabla w_2\|_{L^2}+\|P-P_s\|_{L^2})\nonumber\\  &\quad+C(\|P-P_s\|_{L^2}\|\nabla u\|_{L^2}+\|\nabla u\|_{L^2}\|\nabla w_2\|_{L^2})\nonumber\\
&\leq C_2C_0^{\frac{\delta_0}{3}}\|\sqrt{\rho}\dot{w}_2\|_{L^2}^{2}+\frac{1}{4}\|\sqrt{\rho}\dot{w}_2\|_{L^2}^{2}
+C(\|\nabla w_2\|_{L^2}^2+\|\nabla u\|_{L^2}^2+\|P-P_s\|_{L^6}^{2}),
\end{align}
where we have utilized \eqref{key1}, \eqref{Pu1} and the following estimates
\begin{equation}\label{xbh2}
\|\nabla F_{w_2}\|_{L^{2}}\leq C\|\rho\dot{w}_2\|_{L^{2}}, \quad \|F_{w_2}\|_{L^{2}}\le C (\|\rho\dot{w}_2\|_{L^{2}}+\|\nabla w_2\|_{L^{2}}),
\end{equation}
\begin{equation}\label{xbh4}
\displaystyle \|\nabla w_2\|_{L^{6}}\leq C(\|\rho\dot{w}_2\|_{L^{2}}+\|\nabla w_2\|_{L^{2}}+\|P-\bar{P}\|_{L^{6}}),
\end{equation}
where $F_{w_2}=(\lambda+2\mu)\div w_2-(P-\bar{P}).$
As a result,
\begin{align}\label{bw22}
\displaystyle  & \quad\left((\lambda+2\mu)\|\div w_2\|_{L^{2}}^{2}+\mu\|\curl w_2\|_{L^{2}}^{2}-2\int(P-P_s)\div w_2dx\right)_t
+\int\rho|\dot{w}_2|^{2}dx \nonumber \\
 & \le C\left(\|\nabla w_2\|_{L^{2}}^{2}+\|\nabla u\|_{L^{2}}^{2}+\|P-P_s\|_{L^{2}}^\frac{2}{3}\right) ,
\end{align}
provide that $C_0<\hat{\varepsilon}_2\triangleq(4C_2)^{-\frac{3}{\delta_0}}$.
Integrating \eqref{bw22} over $(0, \sigma(T))$, with \eqref{bw2} and Lemmas \ref{lem-vn}, \ref{lem-basic}, one has
\begin{align}\label{bw23}
\displaystyle  \sup_{0\le t\le \si(T)}\|\nabla w_2\|_{L^{2}}^{2}+\int_0^{\sigma(T)}\int\rho|\dot{w}_2|^{2}dxdt\leq CC_0^{1/3}.
\end{align}

Similarly, multiplying \eqref{fcz3} by $w_{3t}$ and integrating over $\Omega$ give that
\begin{align}\label{bw31}
&\quad \left(\frac{\lambda+2\mu}{2}\int(\div w_3)^{2}dx+\frac{\mu}{2}\int|\curl w_3|^{2}dx\right)_t+\int\rho|\dot{w}_3|^{2}dx \nonumber\\
& =\left(\int(\rho-\rho_s)w_3\cdot \nabla \Phi_s dx\right)_t+\int \div((\rho-\rho_s)u) w_3\cdot \nabla\Phi_s dx \nonumber\\
 &\quad +\int \div(\rho_su) w_3\cdot \nabla\Phi_s dx
  +\int \rho_s \dot{w_3} \cdot \nabla(\Phi-\Phi_s)  dx   \nonumber \\
 &\quad +\int \rho_s u  \cdot \nabla w_3 \cdot \nabla(\Phi-\Phi_s)  dx
 +\int\rho u\cdot\nabla w_3)\cdot dot{w}_3dx \nonumber \\
 & \triangleq \frac{d}{dt}L_0+\sum_{i=1}^5 L_i.
\end{align}
By \eqref{h3-s} and \eqref{g1}, we have
\begin{align}\label{bw3l0}
L_0\leq C\|\nabla \Phi_s\|_{L^3}\|\rho-\rho_s\|_{L^2}\|w_3\|_{L^6}\leq \frac{\mu}{4}\|\nabla w_3\|_{L^2}^2+C C_0.
\end{align}
Using the similar methods to obtain \eqref{tdf2} as in Lemma \eqref{lem-f-td}, it follows from \eqref{fcz3} that
\begin{align}\label{xbh-1}
\displaystyle \|\nabla w_3\|_{L^{6}}&\leq C(\|\rho\dot{w}_3\|_{L^{2}}+\|(\rho-\bar{\rho})\nabla \Phi_s\|_{L^{2}}+\|\rho\nabla(\Phi-\Phi_s)\|_{L^{2}}+\|\rho w_3\|_{L^{2}})\nonumber\\
&\leq C(\|\rho\dot{w}_3\|_{L^{2}}+\|\rho w_3\|_{L^{2}}+CC_0^{\frac{1}{2}}).
\end{align}
Then, by \eqref{h3-s}, \eqref{basic1} and \eqref{xbh-1}, a directly computation yields
\begin{align}\label{bw3l1}
L_1 &=-\int (\rho-\rho_s)(u \cdot \nabla w_3 \cdot \nabla \Phi_s+u \cdot \nabla^2\Phi_s \cdot w_3)dx \nonumber\\
&\leq C\|\rho-\rho_s\|_{L^2}\|u\|_{L^6}(\|w_3\|_{L^6}+\|\nabla w_3\|_{L^6})\|\nabla \Phi_s\|_{H^2} \nonumber \\
%&\leq C\|\rho-\rho_s\|_{L^2}\|\nabla u\|_{L^2}(\|\nabla w_3\|_{L^2}+\|\nabla^2 w_3\|_{L^2}) \nonumber \\
&\leq \frac{1}{4}\|\sqrt{\rho} \dot{w}_3\|_{L^2}^2+C(\|\nabla u\|_{L^2}^2+\|\nabla w_3\|_{L^2}^2+C_0).
\end{align}
similarly, using \eqref{h3-s}, \eqref{phi-hk}, \eqref{phi-dt}, \eqref{basic1} and \eqref{key1}, we obtain
\begin{align}\label{bw3l2}
\sum_{i=2}^{5}L_i \leq& C(|\nabla u\|_{L^2}\|\nabla\Phi_s\|_{L^3}
+\|\nabla \rho_s\|_{L^2}\|\nabla\Phi_s\|_{L^6}\|u\|_{L^6})\|w_3\|_{L^6}\nonumber \\
&+ C\|\sqrt{\rho}\dot{w_3}\|_{L^2}\|\nabla(\Phi-\Phi_s)\|_{L^2}\nonumber \\
&+ C\|\rho^{1/3}u\|_{L^3}\|\nabla w_3\|_{L^6}(\|\nabla(\Phi-\Phi_s)\|_{L^2}+\|\sqrt{\rho}\dot{w_3}\|_{L^2})\nonumber \\
\leq & C(\|\nabla u\|_{L^2}^{2}+\|\nabla w_3\|_{L^2}^2+C_0)+(\frac{1}{4}+CC^{\delta_0/3})\|\sqrt{\rho}\dot{w_3}\|_{L^2}^2.
\end{align}
Putting \eqref{bw3l0}, \eqref{bw3l1} and \eqref{bw3l2} into \eqref{bw31} yields
\begin{align}\label{bw32}
&\quad \left(\|\nabla w_3\|\ltwo \right)_t+\int\rho|\dot{w}_3|^{2}dx \nonumber\\
&\leq C_3C^{\delta_0/3}\|\sqrt{\rho}\dot{w_3}\|_{L^2}^2+C(\|\nabla w_3\|_{L^2}^2+\|\nabla w_3\|_{L^2}^2+C_0).
\end{align}
Thus, integrating it over $(0,\sigma(T)]$, choosing $C_0\leq \hat{\ve}_3 \triangleq (2C_3)^{-3/\delta_0}$, we obtain
\begin{align}\label{bw34}
\quad \sup_{0\le t\le  \si(T)}\|\nabla w_3\|\ltwo+\int_0^{\sigma(T)}\int\rho|\dot{w}_3|^{2}dxdt \leq CC_0.
\end{align}
%provided $C_0\leq \hat{\ve}_4 \triangleq \{\hat{\ve}_3, (2C_7)^{-2/\delta_0}\}$.
Now let $w_{10}=u_0$, so that $w_1+w_2+w_3=u$,
we derive \eqref{uv1} from \eqref{bw14}, \eqref{bw23} and \eqref{bw34} directly under certain condition $C_0<\varepsilon_1\triangleq\min\{\hat{\varepsilon}_1, \hat{\varepsilon}_2, \hat{\varepsilon}_3\}$.

In order to prove \eqref{uv2},  taking $m=2-s$ in \eqref{J02},  and integrating over $(0,\sigma(T)]$ instead of $(0,T]$, in a similar way as we have gotten \eqref{a02}, we obtain
\begin{align}\label{bu1}
&\sup_{0\le t\le \sigma(T)}\sigma^{2-s}\|\sqrt{\rho}\dot{u}\|_{L^2}^2+\int_0^{\sigma(T)}\sigma^{2-s}\|\nabla\dot{u}\|_{L^2}^2dt \nonumber \\
&\leq C\int_0^{\sigma(T)}\sigma^{2-s} \|\nabla u\|^4_{L^4}dt+C(\hat{\rho}, M).
\end{align}
where we have taken advantage of \eqref{uv1}.
By \eqref{key1} and \eqref{uv1}, we have
\begin{align}\label{bu2}
&\quad\int_0^{\sigma(T)}\sigma^{2-s} \|\nabla u\|^4_{L^4}dt \nonumber \\
&\le C\int_0^{\sigma(T)}t^{2-s}\|\sqrt{\rho}\dot{u}\|_{L^{2}}^{3}\|\nabla u\|_{L^{2}}dt+C\int_0^{\sigma(T)}t^{2-s}\|\nabla u\|_{L^{2}}^4dt+C \nonumber \\
&\le C\int_0^{\sigma(T)}t^{\frac{2s-1}{2}}(t^{1-s}\|\nabla u\|_{L^2}^2)^{\frac{1}{2}}(t^{2-s}\|\sqrt{\rho}\dot{u}\|_{L^{2}}^{2})^{\frac{1}{2}}(t^{1-s}\|\sqrt{\rho}\dot{u}\|_{L^2}^2)dt+C \nonumber \\
&\le C(\hat{\rho}, M)\left(\sup_{0\le t\le  \si(T)}t^{2-s}\|\sqrt{\rho}\dot{u}\|_{L^2}^2\right)^{\frac{1}{2}}+C,
\end{align}
which together with \eqref{bu1} yields \eqref{uv2}.
The proof of Lemma \ref{lem-a1} is completed.
\end{proof}

\begin{lemma}\label{lem-a3} If $(\rho,u,\Phi)$ is a smooth solution of \eqref{NSP}-\eqref{boundary} satisfying \eqref{key1} and the initial data condition \eqref{dt2}, then there exists a positive constant  $\varepsilon_2$   depending only on $\mu ,  \lambda ,  \ga ,  a ,  \on, \hat{\rho}, \beta,  \Omega$, and $M$ such that
\begin{align}\label{ba3}
\displaystyle  A_3(\sigma(T))\leq C_0^{\delta_0},
\end{align}
provided $C_0<\varepsilon_2$.
\end{lemma}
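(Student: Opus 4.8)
The strategy is to follow the evolution of $\Psi(t)\triangleq\int\rho|u|^3\,dx$ and integrate the resulting differential inequality over the short interval $[0,\sigma(T)]$ (on which $\sigma(t)=t$), where the time-weighted bounds of Lemma~\ref{lem-a1} are available. Since mass conservation gives $\frac{d}{dt}\int\rho f\,dx=\int\rho\dot f\,dx$, taking $f=|u|^3$ yields
\[
\Psi'(t)=3\int\rho|u|\,u\cdot\dot u\,dx\le 3\|\sqrt{\rho}\dot u\|_{L^2}\Big(\int\rho|u|^4\,dx\Big)^{1/2}.
\]
I would then control the last factor by Hölder's inequality $\int\rho|u|^4\le\big(\int\rho|u|^3\big)^{2/3}\big(\int\rho|u|^6\big)^{1/3}$ together with $\rho\le2\hat{\rho}$, the Sobolev embedding $H^1\hookrightarrow L^6$, and a Poincaré-type inequality (legitimate because $\int\rho\,dx=\int\tilde\rho\,dx\ge\underline{\rho}|\Omega|$ by mass conservation and \eqref{d-d}), giving $\int\rho|u|^6\le C\big(\|\sqrt{\rho} u\|_{L^2}+\|\nabla u\|_{L^2}\big)^6$; combined with \eqref{basic1} this produces
\[
\Big(\int\rho|u|^4\,dx\Big)^{1/2}\le C\,\Psi^{1/3}\big(C_0^{1/2}+\|\nabla u\|_{L^2}\big),\qquad\text{so}\qquad \frac{d}{dt}\Psi^{2/3}\le C\|\sqrt{\rho}\dot u\|_{L^2}\big(C_0^{1/2}+\|\nabla u\|_{L^2}\big).
\]
Integrating over $[0,t]$ for $t\le\sigma(T)$ reduces the claim to bounding $\Psi^{2/3}(0)$ and the time integral of the right-hand side.

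For the initial term I would set $q=\frac{6}{3-2\beta}\in(3,6]$, the borderline exponent for $H^\beta\hookrightarrow L^q$, and interpolate $\int\rho_0|u_0|^3\le\big(\int\rho_0|u_0|^2\big)^{1-a}\big(2\hat{\rho}\|u_0\|_{L^q}^q\big)^{a}$ with $a=\tfrac1{q-2}=\tfrac{3-2\beta}{4\beta}$, noting that $1-a=3\delta_0$; the definition of $C_0$ and \eqref{dt2} then give $\Psi(0)\le C(\hat{\rho},M)C_0^{3\delta_0}$, hence $\Psi^{2/3}(0)\le C(\hat{\rho},M)C_0^{2\delta_0}$. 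What remains — and is the heart of the argument — is $\int_0^{\sigma(T)}\|\sqrt{\rho}\dot u\|_{L^2}\big(C_0^{1/2}+\|\nabla u\|_{L^2}\big)\,dt$. The $C_0^{1/2}$-piece is easy: the pointwise bound $\|\sqrt{\rho}\dot u\|_{L^2}\le C(\hat{\rho},M)\,t^{(s-2)/2}$ from \eqref{uv2} (with $s$ the exponent of Lemma~\ref{lem-a1}, which satisfies $s>\tfrac12$) is integrable on $[0,\sigma(T)]$ since $(s-2)/2>-1$, so this piece is $\le C(\hat{\rho},M)C_0^{1/2}$.

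The $\|\nabla u\|_{L^2}$-piece is the main obstacle, since near $t=0$ neither $\|\sqrt{\rho}\dot u\|_{L^2}$ nor $\|\nabla u\|_{L^2}$ is square-integrable in time by itself and a naive Cauchy–Schwarz fails. I would apply Cauchy–Schwarz with the weight $t^{1-s}$, bound $\int_0^{\sigma(T)}t^{1-s}\|\sqrt{\rho}\dot u\|_{L^2}^2\,dt\le C(\hat{\rho},M)$ by \eqref{uv1}, and then interpolate $\|\nabla u\|_{L^2}^2$ between the a priori smallness $\|\nabla u\|_{L^2}^2\le 2C_0^{1/2}t^{-1}$ (from $A_1(T)\le 2C_0^{1/2}$ in \eqref{key1}) and the weighted bound $\|\nabla u\|_{L^2}^2\le C(\hat{\rho},M)t^{s-1}$ (from \eqref{uv1}): for $\alpha\in(0,2-\tfrac1s)$ this gives $\|\nabla u\|_{L^2}^2\le C\,C_0^{\alpha/2}\,t^{(s-1)-\alpha s}$, and because $2s-2-\alpha s>-1$ one obtains $\int_0^{\sigma(T)}t^{s-1}\|\nabla u\|_{L^2}^2\,dt\le C(\hat{\rho},M,\alpha)C_0^{\alpha/2}$, hence the whole time integral is $\le C(\hat{\rho},M)C_0^{1/2}+C(\hat{\rho},M,\alpha)C_0^{\alpha/4}$. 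Collecting everything, $\Psi^{2/3}(t)\le C\big(C_0^{2\delta_0}+C_0^{1/2}+C_0^{\alpha/4}\big)$ on $[0,\sigma(T)]$; choosing $\alpha$ with $\tfrac83\delta_0<\alpha<2-\tfrac1s$ (a nonempty range for the admissible value of $s$, e.g. $s=\beta$, where the constraint becomes $\tfrac23<1$) makes all three exponents $2\delta_0,\tfrac12,\tfrac\alpha4$ exceed $\tfrac23\delta_0$, so taking $C_0\le\varepsilon_2$ with $\varepsilon_2$ depending only on $\mu,\lambda,\gamma,a,\underline{\rho},\hat{\rho},\beta,\Omega,M$ forces $\Psi^{2/3}(t)\le C_0^{2\delta_0/3}$, i.e. $A_3(\sigma(T))=\sup_{0\le t\le\sigma(T)}\int\rho|u|^3\,dx\le C_0^{\delta_0}$. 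The delicate point throughout is exactly this interpolation: one must trade the time-weighted estimates of Lemma~\ref{lem-a1} against the a priori smallness $A_1(T)\le 2C_0^{1/2}$ and keep careful track of exponents so that the extracted power of $C_0$ dominates $C_0^{2\delta_0/3}$.
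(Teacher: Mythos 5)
Your argument is correct, and it reaches \eqref{ba3} by a genuinely different route from the paper. The paper multiplies the momentum equation by $3|u|u$, integrates, and estimates the viscous, pressure and electric-field terms one by one using the div-curl/effective-flux machinery \eqref{tdu2} and the elliptic bound \eqref{phi-hk}, arriving at the differential inequality \eqref{ba33} whose right-hand side involves $\|\nabla u\|_{L^2}^{5/2}\|\rho\dot u\|_{L^2}^{1/2}$, $\|\nabla u\|_{L^2}^3$, etc.; powers of $C_0$ are then extracted by splitting off a factor $\|\nabla u\|_{L^2}^{4\delta_0}$ and using $\int_0^T\|\nabla u\|_{L^2}^2dt\le CC_0$. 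You instead use only the renormalized continuity equation, $\frac{d}{dt}\int\rho|u|^3dx=3\int\rho|u|u\cdot\dot u\,dx$, and Cauchy--Schwarz in the measure $\rho\,dx$, which collapses the whole right-hand side into $\|\sqrt\rho\dot u\|_{L^2}\,\Psi^{1/3}(C_0^{1/2}+\|\nabla u\|_{L^2})$ and bypasses Lemma \ref{lem-f-td} entirely; the price is that the small power of $C_0$ must be recovered from the mixed term $\int\|\sqrt\rho\dot u\|_{L^2}\|\nabla u\|_{L^2}dt$, which you do by a weighted Cauchy--Schwarz against \eqref{uv1} together with an interpolation of $\|\nabla u\|_{L^2}^2$ between the $A_1$-smallness $2C_0^{1/2}t^{-1}$ and the time-weighted bound $Ct^{s-1}$; your exponent bookkeeping ($\tfrac{8\delta_0}{3}<\alpha<2-\tfrac1s$ with $s=\beta$, nonempty since $\tfrac23<1$) is right, and the treatment of $\Psi(0)$ via the critical embedding $H^\beta\hookrightarrow L^{6/(3-2\beta)}$ coincides with the paper's \eqref{ba35}. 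Two cosmetic remarks: the Poincar\'e detour through $\int\rho\,dx\ge\underline{\rho}|\Omega|$ is unnecessary (and introduces $\underline{\rho}$ into the constants), since $u\cdot n=0$ on $\partial\Omega$ already gives $\|u\|_{L^6}\le C\|\nabla u\|_{L^2}$ by \eqref{g1} with $C_2=0$; and you should state explicitly that $\varepsilon_2\le\varepsilon_1$ so that Lemma \ref{lem-a1} is in force, and fix the unnamed exponent $s$ of that lemma as $s=\beta$, as the paper implicitly does.
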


\begin{proof}
Multiplying $\eqref{NSP}_2$ by $3|u|u$, and integrating the resulting equation over $ \O$, lead  to
%\begin{align}\label{ba31} & 3\int|u|\rho \dot{u}\cdot u dx -3(\lambda+2\mu)\int|u|\nabla\div u\cdot u dx- 3\mu\int|u|\nabla\times\omega\cdot udx \nonumber \\&+3\int|u|u\cdot\nabla(P-P_s)dx-3\int(\rho \nabla(\Phi-\Phi_s)+(\rho-\rho_s)\nabla\Phi_s)\cdot |u|udx=0,\end{align} which implies that
\begin{align}\label{ba32}
& \left(\int \n |u|^{3}dx\right)_t+3(\lambda+2\mu)\int\div u\,\div(|u|u)dx+ 3\mu\int\omega\cdot\curl(|u|u)dx\nonumber \\
&- 3\int(P-P_s)\div(|u|u)dx -3\int(\rho \nabla(\Phi-\Phi_s)+(\rho-\rho_s)\nabla\Phi_s)\cdot |u|udx=0.
\end{align}
By \eqref{h3-s}, \eqref{tdu2}, \eqref{phi-hk}, \eqref{key1} and \eqref{basic1}, it follows that
\begin{align}\label{ba33}
&\quad \left(\int\rho|u|^{3}dx\right)_t \nonumber \\
%&\leq C\int|u||\nabla u|^{2}dx+C\int|P-P_s||u||\nabla u|dx\nonumber \\&\quad +C\int\rho|\nabla(\Phi-\Phi_s)||u|^2dx+C\int|\rho-\rho_s||\nabla\Phi_s||u|^2dx \nonumber \\
&\leq C\|u\|_{L^6}\|\nabla u\|_{L^2}\|\nabla u\|_{L^3}+C\|P-P_s\|_{L^3}\|u\|_{L^6}\|\nabla u\|_{L^2}\nonumber \\
&\quad +C\|\rho^{\frac{1}{3}}u\|_{L^3}^2\|\nabla(\Phi-\Phi_s)\|_{L^3}
 +\|\rho-\rho_s\|_{L^2}\|\nabla u\|_{L^2}^2\|\nabla\Phi_s\|_{L^6} \nonumber \\
&\leq C\|\nabla u\|_{L^2}^{\frac{5}{2}}\|\rho\dot{u}\|_{L^2}^{\frac{1}{2}} +C\|\nabla u\|_{L^2}^3+CC_0^{\frac{1}{12}}\|\nabla u\|_{L^2}^{\frac{5}{2}}  \nonumber \\
&\quad+CC_0^{\frac{1}{3}}\|\nabla u\|_{L^2}^2+CC_0^{\frac{2 \delta_0}{3}+\frac{1}{2}}.
\end{align}
Hence, integrating \eqref{ba33} over $(0,\sigma(T))$ and using \eqref{key1}, we get
\begin{align}\label{ba34}
&\quad\sup_{0\le t\le  \si(T) }\int\rho|u|^{3}dx \nonumber\\
&\leq C \int_0^{\sigma(T)}\|\nabla u\|_{L^2}^{\frac{5}{2}}\|\rho\dot{u}\|_{L^2}^{\frac{1}{2}}dt+C\int_0^{\sigma(T)}\|\nabla u\|_{L^2}^{3}dt \nonumber \\
&\quad+CC_0^{\frac{1}{12}}\int_0^{\sigma(T)}\|\nabla u\|_{L^2}^{\frac{5}{2}}dt+CC_0^{2\delta_0}+\int\rho_0|u_0|^3dx \nonumber \\
&\leq C\int_0^{\sigma(T)}(t^{1-\beta}\|\rho\dot{u}\|_{L^2}^2)^{\frac{1}{4}} (t^{1-\beta}\|\nabla u\|_{L^2}^2)^{\frac{5-8 \delta_0}{4}}\|\nabla u\|_{L^2}^{4\delta_0} t^\frac{(\beta-1)(4\delta_0-3)}{2} dt \nonumber \\
&\quad +C\int_0^{\sigma(T)}(t\|\rho\dot{u}\|_{L^2}^2)^{\frac{1}{2}} t^{1-\beta}\|\nabla u\|_{L^2}^2 t^\frac{\beta-2}{2} dt \nonumber \\
&\quad + CC_0^{\frac{1}{12}}\int_0^{\sigma(T)}(t^{1-\beta}\|\nabla u\|_{L^2}^2)^{\frac{5}{4}} t^\frac{5(\beta-1)}{4} dt+CC_0^{2\delta_0}+\int\rho_0|u_0|^3dx \nonumber \\
&\leq CC_0^{2 \delta_0}+\int\rho_0|u_0|^3dx\leq C_4C_0^{2\delta_0},
\end{align}
where we have used the fact $\delta_0=\frac{2\beta-1}{4\beta}\in(0,\frac{1}{4}]$, $\beta\in (1/2,1]$ and
\begin{align}\label{ba35}
 \displaystyle \int\rho_0|u_0|^{3}dx\leq C\|\rho_0^{\frac{1}{2}}u_0\|_{L^{2}}^{{3(2\beta-1)}/{2\beta}}\|u_0\|_{H^\beta}^{{3}/{2\beta}}\leq CC_0^{3 \delta_0}.
 \end{align}
Finally, set $\varepsilon_2\triangleq\min\{\varepsilon_1,(C_4)^{-\frac{1}{\delta_0}}\}$, we get \eqref{ba3} and the proof of Lemma \ref{lem-a3} is completed.
\end{proof}

\begin{lemma}\label{lem-a1a2} Let $(\rho,u,\Phi)$ be a smooth solution of
 \eqref{NSP}-\eqref{boundary} on $\O \times (0,T] $ satisfying \eqref{key1} and the initial data condition $\|u_0\|_{H^\beta}\leq M$ in \eqref{dt2}. Then there exists a positive constant $C$ and $\varepsilon_3$ depending only  on $\mu,$  $\lambda,$   $\gamma,$ $a$, $\beta$, $\on$, $\hat{\rho}$, $M$ and $\Omega$ such that
 \begin{align}
A_1(\sigma(T))+B[0,\sigma(T)]\le & CC_0^{\frac{3}{4}},\label{a1b}\\
A_1(T)\le & C_0^{\frac{1}{2}},\label{a1a2}
 \end{align}
provided $C_0\leq\varepsilon_3$.
Furthermore, if $T>1$, then for any $1\leq t_1< t_2\leq T$,
\begin{align}\label{ajf}
 B[t_1,t_2] \leq CC_0^{3/4}+CC_0(t_2-t_1).
\end{align}
\end{lemma}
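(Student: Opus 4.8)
The plan is to prove Lemma~\ref{lem-a1a2} in three stages, combining the short-time time-weighted bounds of Lemma~\ref{lem-a1} with the integral identity \eqref{a00} of Lemma~\ref{lem-a0} and the structural a priori assumptions \eqref{key1}, and then bootstrapping to the long-time statements by a time-piecewise iteration.

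\textbf{Step 1: the short-time estimate \eqref{a1b}.} Observe that on $(0,\sigma(T))$ we have $\sigma(t)=t$, so the time-weighted bounds \eqref{uv1}--\eqref{uv2} (which hold once $C_0<\varepsilon_1$) control $t^{1-s}\|\nabla u\|_{L^2}^2$, $\int_0^{\sigma(T)}t^{1-s}\|\sqrt\rho\dot u\|_{L^2}^2\,dt$ and the corresponding $\dot u$-quantities with weight $t^{2-s}$, where $s$ is chosen so that $\delta_0=\frac{2\beta-1}{4\beta}=\frac{1-s}{?}$; the point is that the extra powers of $t$ gained from $\beta>\tfrac12$ convert the $C(\hat\rho,M)$-bounds into $C_0$-small bounds after integrating against the lower weights $\sigma$ and $\sigma^3$ that appear in $A_1$ and $B$. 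Concretely, I would start from \eqref{a00}, i.e. $A_1(\sigma(T))+B[0,\sigma(T)]\le CC_0+C\int_0^{\sigma(T)}(\sigma\|\nabla u\|_{L^3}^3+\sigma^3\|\nabla u\|_{L^4}^4)\,dt$, and estimate the two nonlinear integrals. For the cubic term I interpolate $\|\nabla u\|_{L^3}\le C\|\nabla u\|_{L^2}^{1/2}\|\nabla u\|_{L^6}^{1/2}$ and use \eqref{tdu2} to replace $\|\nabla u\|_{L^6}$ by $\|\sqrt\rho\dot u\|_{L^2}$ plus lower order terms controlled by $C_0$ via \eqref{phi-hk}, \eqref{phi-dt2}, \eqref{h3-s} and Lemma~\ref{lem-basic}; then I feed in the weights from \eqref{uv1}--\eqref{uv2} exactly as in \eqref{ba34}, using $\beta\in(\tfrac12,1]$ so that all the resulting powers of $t$ are integrable near $0$, and collect a bound $CC_0^{3/4}$. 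The quartic term $\sigma^3\|\nabla u\|_{L^4}^4$ is handled the same way via $\|\nabla u\|_{L^4}^4\le C\|\nabla u\|_{L^2}\|\sqrt\rho\dot u\|_{L^2}^3+\text{l.o.t.}$ as already used in \eqref{bu2}. This yields \eqref{a1b}.

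\textbf{Step 2: the full-time estimate \eqref{a1a2}.} Here $\sigma(t)\equiv 1$ for $t\ge 1$, so the weights disappear and I must again return to \eqref{a00}. Splitting $\int_0^T=\int_0^{\sigma(T)}+\int_{\sigma(T)}^T$, the first piece is $\le CC_0^{3/4}$ by Step~1, and on $(\sigma(T),T)$ I bound $\int(\|\nabla u\|_{L^3}^3+\|\nabla u\|_{L^4}^4)\,dt$ using \eqref{tdu2} to trade $L^p$-norms of $\nabla u$ against $\|\sqrt\rho\dot u\|_{L^2}$, then Young's inequality and the a priori assumption $A_1(T)\le 2C_0^{1/2}$ from \eqref{key1}: each nonlinear term comes with a positive power of $\sup(\|\nabla u\|_{L^2}^2)\le 2C_0^{1/2}$ or of $\int\|\sqrt\rho\dot u\|_{L^2}^2$, which is itself $\le 2C_0^{1/2}$, so the right side of \eqref{a00} is bounded by $CC_0+CC_0^{3/4}+CC_0^{1/2}\cdot(\text{small})\le CC_0^{1/2}/2$ once $C_0$ is small enough — this closes the bootstrap and gives $A_1(T)\le C_0^{1/2}$. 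The delicate part is making the powers work out: the nonlinear contributions must each carry a strictly positive extra power of $C_0$ relative to $C_0^{1/2}$, which is exactly why the div-curl decomposition \eqref{tdu2} (rather than naive elliptic estimates) and the basic energy bound \eqref{basic2} giving $\int_0^T\|\nabla u\|_{L^2}^2\,dt\le CC_0$ are essential.

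\textbf{Step 3: the long-time increment \eqref{ajf}.} For $1\le t_1<t_2\le T$, on this interval $\sigma\equiv 1$, so $B[t_1,t_2]=\int_{t_1}^{t_2}(\|\sqrt\rho\dot u\|_{L^2}^2+\|\nabla\dot u\|_{L^2}^2)\,dt$. I would integrate the differential inequality \eqref{J02} (with $m=0$, since there is no weight past $t=1$) together with \eqref{I01} over $(t_1,t_2)$; the time-derivative terms integrate to boundary contributions at $t_1$ and $t_2$, each of which is $\le C\sup_t(\|\nabla u\|_{L^2}^2+\|\nabla u\|_{L^2}^4+\|\sqrt\rho\dot u\|_{L^2}^2)\le CC_0^{1/2}$ by \eqref{a1a2}, and the boundary flux term is absorbed via \eqref{J0b1}. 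The remaining volume integrals $\int_{t_1}^{t_2}\|\nabla u\|_{L^3}^3+\|\nabla u\|_{L^4}^4\,dt$ are controlled, as in Step~2, by $CC_0^{3/4}$ using \eqref{tdu2}, \eqref{a1a2} and \eqref{basic2}; the linear terms $\int_{t_1}^{t_2}\|\nabla u\|_{L^2}^2\,dt$ and $\int_{t_1}^{t_2}\|\sqrt\rho\dot u\|_{L^2}^2\,dt$ are each $\le CC_0$ by the basic energy estimate, but the Gronwall-type self-coupling term $\int_{t_1}^{t_2}\|\sqrt\rho\dot u\|_{L^2}^2\,dt$ appearing on the right of \eqref{J02} forces a $C(t_2-t_1)$ factor — this is where the explicit $CC_0(t_2-t_1)$ appears, since on a long interval the best we can say about $\int\|\sqrt\rho\dot u\|_{L^2}^2$ is that it grows at most linearly in the length. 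Thus $B[t_1,t_2]\le CC_0^{3/4}+CC_0(t_2-t_1)$.

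\textbf{Main obstacle.} The crux throughout is the careful bookkeeping of powers of $C_0$ together with powers of the time-weight $t$ near $t=0$: one must verify that every nonlinear term, after applying \eqref{tdu2} and the energy bounds, carries a net power of $C_0$ strictly larger than the target ($C_0^{3/4}$ in Step~1, $C_0^{1/2}$ in Step~2), and simultaneously that the residual powers of $t$ from the time-weighted estimates \eqref{uv1}--\eqref{uv2} are integrable — this is precisely where the hypothesis $\beta>\tfrac12$ (equivalently $\delta_0>0$) is used, exactly as in the computation \eqref{ba34}. The boundary integrals on $\partial\Omega$ generated by the slip condition, handled through \eqref{bdd1}, \eqref{J0b1} and Lemma~\ref{lem-f-td}, must be kept under control at each step but do not introduce new difficulties beyond those already met in Lemma~\ref{lem-a0}.
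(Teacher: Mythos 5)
Your Step 1 follows the paper's computation \eqref{a4} and is fine. The genuine gap is in Step 2. You propose to split $\int_0^T=\int_0^{\sigma(T)}+\int_{\sigma(T)}^T$ in \eqref{a00} and to bound the piece over $(1,T)$ in one stroke using \eqref{tdu2}, Young's inequality and \eqref{key1}. This cannot work: when you expand $\|\nabla u\|_{L^3}^3$ and $\|\nabla u\|_{L^4}^4$ via \eqref{tdu2}, the lower-order contributions $\|P-P_s\|_{L^p}^p$ and $\|(\rho-\rho_s)\nabla\Phi_s\|_{L^2}$ do \emph{not} decay in time — the paper only has $\sup_t\|\rho-\rho_s\|_{L^2}^2\le CC_0$ from \eqref{basic2}, with no dissipation of $\rho-\rho_s$ — so $\int_1^T\|P-P_s\|_{L^3}^3\,dt\le CC_0(T-1)$ grows linearly in $T$ and cannot be absorbed into a $T$-independent bound $CC_0^{1/2}$. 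This is exactly the obstruction the authors single out in the introduction (the coupling terms $(\rho-\rho_s)\nabla\Phi_s$ and $\rho\nabla(\Phi-\Phi_s)$ prevent a time-independent bound on $\|P-P_s\|_{L^2(0,T;L^2)}$), and it is why they replace your direct splitting by a time-piecewise iteration: shifted weights $\sigma_k(t)=\sigma(t+1-k)$ on overlapping windows $[k-1,k+1]$, $k=2,\dots,[T]$, so that each window of length $2$ contributes only $O(C_0)$ from the non-decaying terms, and the final estimate \eqref{a06} is a supremum over windows rather than one global integral. A secondary error in Step 2: you invoke ``$\int\|\sqrt{\rho}\dot u\|_{L^2}^2\le 2C_0^{1/2}$'' as part of the a priori assumption, but \eqref{key1} bounds only $A_1$ and $A_2$, not $B$; that integral is part of what is being estimated and would have to be absorbed into the left-hand side.

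Step 3 is close to the paper's \eqref{a09}, but note that to obtain the coefficient $CC_0$ in front of $(t_2-t_1)$ you need the sharper window estimate $\sup_{1\le t\le T}(\|\nabla u\|_{L^2}^2+\|\sqrt{\rho}\dot u\|_{L^2}^2)\le CC_0^{3/4}$ from \eqref{a06} (giving $\|\sqrt{\rho}\dot u\|_{L^2}^3\|\nabla u\|_{L^2}\le CC_0^{9/8}\le CC_0$ pointwise), not merely \eqref{a1a2}, which would only yield $CC_0^{3/4}(t_2-t_1)$. Since \eqref{a06} is itself the output of the piecewise iteration you omitted, repairing Step 2 is essential to Step 3 as well.
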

\begin{proof} The proof proceeds in two steps. First, for $t\in (0,\sigma(T)), T\geq 1$, by \eqref{tdu2}, \eqref{key1} and Lemma \ref{lem-a1}, one can check that
\begin{align}\label{a4}
&\quad\int_0^{\sigma(T)}\sigma \|\nabla u\|^3_{L^3}dt+\int_0^{\sigma(T)}\sigma^{3} \|\nabla u\|^4_{L^4}dt \nonumber \\
&\le C\int_0^{\sigma(T)}t\left(\|\sqrt{\rho}\dot{u}\|_{L^{2}}^{\frac{3}{2}}\|\nabla u\|_{L^{2}}^{\frac{3}{2}}+\|\nabla u\|_{L^{2}}^3+C_0^{\frac{3}{4}}\|\sqrt{\rho}\dot{u}\|_{L^{2}}^{\frac{3}{2}}+C_0^{\frac{1}{2}}\|\nabla u\|_{L^{2}}^{\frac{3}{2}}+C_0\right)dt \nonumber \\
&\quad + C\int_0^{\sigma(T)}t^{3}\left(\|\sqrt{\rho}\dot{u}\|_{L^{2}}^{3}\|\nabla u\|_{L^{2}}+\|\nabla u\|_{L^{2}}^4+C_0^{\frac{1}{2}}\|\sqrt{\rho}\dot{u}\|_{L^{2}}^{3}+C_0\|\nabla u\|_{L^{2}}+C_0\right)dt \nonumber \\
&\le C\int_0^{\sigma(T)}t^{\frac{3\beta-2}{4}}(\|\nabla u\|_{L^2}^2)^{\frac{3}{4}}(t^{2-\beta}\|\sqrt{\rho}\dot{u}\|_{L^{2}}^{2})^{\frac{3}{4}}%+CC_0 \nonumber \\ & \quad
+CC_0^{\frac{3}{4}}\int_0^{\sigma(T)}t^{\frac{3\beta-2}{4}}(t^{2-\beta}\|\sqrt{\rho}\dot{u}\|_{L^2}^2)^{\frac{3}{4}}dt\nonumber \\
&\quad
+ C\int_0^{\sigma(T)}t^{\frac{2\beta-1}{2}}(\|\nabla u\|_{L^2}^2)^{\frac{1}{2}}(t^{3}\|\sqrt{\rho}\dot{u}\|_{L^{2}}^{2})^{\frac{1}{2}}(t^{2-\beta}\|\sqrt{\rho}\dot{u}\|_{L^2}^2)dt+CC_0 \nonumber \\
& \quad +CC_0^{\frac{1}{2}}\int_0^{\sigma(T)}t^{\frac{2\beta-1}{2}}(t^{3}\|\sqrt{\rho}\dot{u}\|_{L^{2}}^{2})^{\frac{1}{2}}(t^{2-\beta}\|\sqrt{\rho}\dot{u}\|_{L^2}^2)dt+CC_0^{\frac{3}{4}}
 \nonumber \\
&\le CC_0^{\frac{3}{4}},
\end{align}
which, along with \eqref{a00}, gives \eqref{a1b}.

Second, for $t\in (\sigma(T),T), T\geq 1$, we shall show that \eqref{a1a2} holds on each small time-interval. It should be pointed out that \eqref{a1b} implies that on $t=1$,
\begin{equation}
\|\nabla u(1)\|_{L^2}^2+\|\sqrt{\rho}\dot{u}(1)\|_{L^2}^2	\leq CC_0^{3/4}.
\end{equation}
Integrating \eqref{I01} and \eqref{J02} over $[1,3]$, summing them up, from \eqref{key1}, \eqref{tdu2} and \eqref{a1b}, we can obtain that there exists a positive $C$, independent of $T$, such that
\begin{align}\label{a03}
&\sup_{1\le t\le 3}(\|\nabla u\|_{L^2}^2+\|\sqrt{\rho}\dot{u}\|_{L^2}^2) +B[1,3] \nonumber \\
\leq &\|\nabla u(1)\|_{L^2}^2+\|\sqrt{\rho}\dot{u}(1)\|_{L^2}^2 +CC_0+C\int_1^{3} \|\nabla u\|_{L^3}^3 dt+C\int_1^{3} \|\nabla u\|_{L^4}^4 dt \nonumber \\
\leq &CC_0^{3/4}+C\int_1^{3}\|\sqrt{\rho}\dot{u}\|_{L^{2}}^{\frac{3}{2}}\|\nabla u\|_{L^{2}}^{\frac{3}{2}} dt+C\int_1^{3} \|\sqrt{\rho}\dot{u}\|_{L^{2}}^{3}\|\nabla u\|_{L^{2}}dt\nonumber \\
\leq &CC_0^{3/4},
\end{align}
For $T\geq 3$, Let $[T]$ be the largest integer less or equal to $T$. For each integer $k=2,3,\cdots,[T]-1$, we introduce the function $\sigma_k(t) \triangleq \sigma(t+1-k)=\min\{1,t+1-k\}$.  Then, for $t\in [k-1,k+1]$, by replacing $\sigma(t)$ with $\sigma_k(t)$ and repeating the process of Lemma \ref{lem-a0}, we obtain that \eqref{I01} and \eqref{J02} still holds with $\sigma_k(t)$ instead of $\sigma(t)$. Therefore, integrating them over $[k-1,k+1]$ respectively, summing them up, from \eqref{key1}, \eqref{tdu2} and \eqref{a1b}, we can obtain that there exists a positive $C$, independent of $k$ and $T$, such that
\begin{align}\label{a04}
&\sup_{k-1\le t\le k+1 }(\sigma_k(t)\|\nabla u\|_{L^2}^2+\sigma^3_k(t)\|\sqrt{\rho}\dot{u}\|_{L^2}^2) +\int_{k-1}^{k+1}\big(\sigma_k(t)\|\sqrt{\rho}\dot{u}\|_{L^2}^2+\sigma^3_k(t)\|\nabla\dot{u}\|_{L^2}^2\big)dt \nonumber \\
\leq &CC_0^{3/4}+C\int_{k-1}^{k+1} \sigma_k(t)\|\nabla u\|_{L^3}^3 dt+C\int_{k-1}^{k+1} \sigma^3_k(t)\|\nabla u\|_{L^4}^4 dt \nonumber \\
\leq &CC_0^{3/4}+C\int_{k-1}^{k+1}\sigma_k(t)\|\sqrt{\rho}\dot{u}\|_{L^{2}}^{\frac{3}{2}}\|\nabla u\|_{L^{2}}^{\frac{3}{2}} dt+C\int_{k-1}^{k+1}\sigma^3_k(t)\|\sqrt{\rho}\dot{u}\|_{L^{2}}^{3}\|\nabla u\|_{L^{2}}dt\nonumber \\
\leq &CC_0^{3/4}+C\sup_{1\leq t\leq T}(\|\nabla u\|_{L^{2}}^4+\|\sqrt{\rho}\dot{u}\|_{L^2}^4)\int_0^T\|\nabla u\|_{L^{2}}^2dt+\frac{1}{2}\int_{k-1}^{k+1}\sigma_k(t)\|\sqrt{\rho}\dot{u}\|_{L^2}^2dt\nonumber \\
\leq &CC_0^{3/4}+ \frac{1}{2}\int_{k-1}^{k+1}\sigma_k(t)\|\sqrt{\rho}\dot{u}\|_{L^2}^2dt,
\end{align}
which implies that
\begin{align}\label{a05}
&\sup_{k-1\le t\le k+1 }(\sigma_k(t)\|\nabla u\|_{L^2}^2+\sigma^3_k(t)\|\sqrt{\rho}\dot{u}\|_{L^2}^2)\nonumber \\ 
&  +\int_{k-1}^{k+1}\big(\sigma_k(t)\|\sqrt{\rho}\dot{u}\|_{L^2}^2+\sigma^3_k(t)\|\nabla\dot{u}\|_{L^2}^2\big)dt\leq CC_0^{3/4},\quad \text{for}\, k=2,\cdots,[T]-1.
\end{align}
Similarly, choosing $\sigma_{[T]}(t)\triangleq \sigma(t+1-[T])=\min\{1,t+1-[T]\}$, one has 
\begin{align}\label{a05-1}
&\sup_{[T]-1\le t\le T }(\sigma_k(t)\|\nabla u\|_{L^2}^2+\sigma^3_k(t)\|\sqrt{\rho}\dot{u}\|_{L^2}^2)\nonumber \\ 
&  +\int_{[T]-1}^{T}\big(\sigma_k(t)\|\sqrt{\rho}\dot{u}\|_{L^2}^2+\sigma^3_k(t)\|\nabla\dot{u}\|_{L^2}^2\big)dt\leq CC_0^{3/4}.
\end{align}
Since $\sigma_k(t)=1$ for $t\in [k,k+1]$ and $\sigma_{[T]}(t)=1$ for $t\in [[T],T]$, \eqref{a05} and \eqref{a05-1} yields that
\begin{align}\label{a06}
\displaystyle \sup_{1\le t\le T }(\|\nabla u\|_{L^2}^2+\|\sqrt{\rho}\dot{u}\|_{L^2}^2)\leq CC_0^{3/4}.
\end{align}
Combining \eqref{a1b} and \eqref{a06} yields that
\begin{equation}\label{a1a2-2} \sup_{0\le t\le T }(\sigma\|\nabla u\|_{L^2}^2+ \sigma ^3\|\sqrt{\rho} \dot{u}\|_{L^2}^2)\leq C_{5}C_0^{3/4}. \end{equation}
Set $\varepsilon_3\triangleq\min\{\varepsilon_2,(C_{5}^{-4}\}$, \eqref{a1a2} holds when $C_0<\varepsilon_3$. Noting that $[0,\infty]=\cup_{k=1}^{\infty}[k-1,k+1]$ and the estimate \eqref{a05} holds for each $k=2,3,\cdots$, we thus conclude that \eqref{a1a2} is valid in the case when $T=\infty$.

Finally, we proceed to prove \eqref{ajf}. Integrating \eqref{I01} and \eqref{J02} over $(t_1,t_2)$ with $1\leq t_1< t_2\leq T$, summing them up, using \eqref{tdu2}, \eqref{a1a2} and \eqref{a06}, yields
\begin{align}\label{a09}
& \quad B[t_1,t_2] \leq CC_0^{3/4}+C\int_{t_1}^{t_2} (\|\nabla u\|_{L^3}^3+\|\nabla u\|_{L^4}^4) dt\nonumber \\
 &  \leq CC_0^{3/4}+C\int_{t_1}^{t_2}\|\sqrt{\rho}\dot{u}\|_{L^{2}}^{\frac{3}{2}}\|\nabla u\|_{L^{2}}^{\frac{3}{2}} dt+C\int_{t_1}^{t_2} \|\sqrt{\rho}\dot{u}\|_{L^{2}}^{3}\|\nabla u\|_{L^{2}}dt \nonumber \\
 &  \leq CC_0^{3/4}+CC_0(t_2-t_1),
\end{align}
and finishes the proof of Lemma \ref{lem-a1a2}.
\end{proof}

We now proceed to derive a uniform (in time) upper bound for the
density, which turns out to be the key to obtain all the higher
order estimates and thus to extend the classical solution globally.
%We will use an approach motivated by the work of \cite{lx2016}, see also \cite{HLX2012}.
\begin{lemma}\label{lem-brho}
There exists a positive constant  $\ve_4$ depending on  $\mu$,  $\lambda$,   $\ga$, $a$, $\on$, $\hat{\rho}$, $\beta$, $ \Omega,$ and $M$  such that, if  $(\rho,u,\Phi)$ is a smooth solution  of \eqref{NSP}-\eqref{boundary} on $\O \times (0,T] $ satisfying \eqref{key1} and the initial data condition \eqref{dt2}, then
 \begin{align}\label{brho}
 \displaystyle  \sup_{0\le t\le T}\|\n(t)\|_{L^\infty}  \le
\frac{7\hat{\rho} }{4}  ,
 \end{align}
provided $C_0\le \ve_4. $
\end{lemma}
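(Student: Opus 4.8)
\noindent The plan is to track $\rho$ along particle trajectories and to close the bound by an ODE comparison of Zlotnik type. For $x\in\Omega$ let $X(t;x)$ solve $\frac{d}{dt}X(t;x)=u(X(t;x),t)$, $X(0;x)=x$; since $u\cdot n=0$ on $\partial\Omega$ the trajectory stays in $\overline{\Omega}$ and $x\mapsto X(t;x)$ is a bijection of $\overline{\Omega}$ for each $t$. Along it, $\eqref{NSP}_1$ gives $\frac{d}{dt}\rho(X(t;x),t)=-\rho\,\div u=-\frac{\rho}{\lambda+2\mu}\big(F+P(\rho)-P_s\big)$ by \eqref{flux}. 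Setting $y(t)=\rho(X(t;x),t)$ and writing $P(\rho)-P_s=a(\rho^{\gamma}-\bar{\rho}^{\gamma})+(a\bar{\rho}^{\gamma}-P_s)$ with $a\bar{\rho}^{\gamma}-P_s=a(\bar{\rho}^{\gamma}-\rho_s^{\gamma})\ge0$ (as $\rho_s\le\bar{\rho}$), we obtain $y'(t)=g(y(t))+b'(t)$ with $g(\zeta)\triangleq-\frac{a}{\lambda+2\mu}\zeta(\zeta^{\gamma}-\bar{\rho}^{\gamma})$ and $b'(t)=-\frac{1}{\lambda+2\mu}\rho\big(F+a\bar{\rho}^{\gamma}-P_s\big)\big|_{(X(t;x),t)}$; the $(a\bar{\rho}^{\gamma}-P_s)$ part of $b'$ is $\le0$, so $b(t_2)-b(t_1)\le C\int_{t_1}^{t_2}\|F\|_{L^\infty}\,dt$ for $0\le t_1<t_2\le T$ (using $\rho\le2\hat{\rho}$). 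Moreover $g$ is decreasing on $[\bar{\rho},\infty)$ and $g(\hat{\rho})\eqqcolon-N_1<0$ since $\hat{\rho}\ge\bar{\rho}+1>\bar{\rho}$. Hence, by the Zlotnik-type ODE lemma in Appendix~\ref{appendix-a}, it suffices to prove that for some $\theta_0>0$
\begin{equation}\label{xgoal}
\int_{t_1}^{t_2}\|F\|_{L^\infty}\,dt\le CC_0^{\theta_0}+\frac{N_1}{C}(t_2-t_1)\qquad\text{for all }0\le t_1<t_2\le T ,
\end{equation}
for then $y(t)\le\max\{\rho_0(x),\hat{\rho}\}+CC_0^{\theta_0}=\hat{\rho}+CC_0^{\theta_0}$ (recall $\rho_0\le\hat{\rho}$), and, the flow map being onto $\overline{\Omega}$, $\sup_{\Omega\times[0,T]}\rho\le\hat{\rho}+CC_0^{\theta_0}\le\frac{7}{4}\hat{\rho}$ once $C_0\le\ve_4$.

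To prove \eqref{xgoal} I would start from $\|F\|_{L^\infty}\le C\|F\|_{L^6}^{1/2}\|\nabla F\|_{L^6}^{1/2}+C\|F\|_{L^6}$, estimate $\|F\|_{L^6}$ and $\|\nabla F\|_{L^6}$ by Lemma~\ref{lem-f-td}, bound $\|\rho\dot{u}\|_{L^6}\le C\|\dot{u}\|_{L^6}\le C(\|\nabla\dot{u}\|_{L^2}+\|\nabla u\|_{L^2}^2)$ by Lemma~\ref{lem-ud} and $\rho\le2\hat{\rho}$, and observe that the electric-field contributions are small: $\|\nabla(\Phi-\Phi_s)\|_{L^2}\le CC_0^{1/2}$ by \eqref{basic1}, $\|\nabla(\Phi-\Phi_s)\|_{L^6}\le C\|\rho-\rho_s\|_{L^2}\le CC_0^{1/2}$ by \eqref{phi-hk}, and $\|(\rho-\rho_s)\nabla\Phi_s\|_{L^6}\le C\|\rho-\rho_s\|_{L^\infty}^{2/3}\|\rho-\rho_s\|_{L^2}^{1/3}\le CC_0^{1/6}$ by \eqref{h3-s}, \eqref{basic2} and $\rho\le2\hat{\rho}$; with $\|P-P_s\|_{L^2}\le C\|\rho-\rho_s\|_{L^2}\le CC_0^{1/2}$ this yields
\begin{equation*}
\|F\|_{L^\infty}\le C\big(\|\sqrt{\rho}\dot{u}\|_{L^2}^{1/2}+\|\nabla u\|_{L^2}^{1/2}+C_0^{1/4}\big)\big(\|\nabla\dot{u}\|_{L^2}^{1/2}+\|\nabla u\|_{L^2}+C_0^{1/12}\big)+C\big(\|\sqrt{\rho}\dot{u}\|_{L^2}+\|\nabla u\|_{L^2}+C_0^{1/2}\big).
\end{equation*}
On the range $(\sigma(T),T)$, nonempty only when $T>1$, one has $\sigma\equiv1$, so \eqref{a06} gives $\|\nabla u\|_{L^2}^2+\|\sqrt{\rho}\dot{u}\|_{L^2}^2\le CC_0^{3/4}$ and hence $\|F\|_{L^\infty}\le CC_0^{3/16}\|\nabla\dot{u}\|_{L^2}^{1/2}+CC_0^{c}$ for some $c>0$. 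Integrating over $(t_1,t_2)\subset(1,T)$, using Hölder in time together with $\int_{t_1}^{t_2}\|\nabla\dot{u}\|_{L^2}^2\,dt\le B[t_1,t_2]\le CC_0^{3/4}+CC_0(t_2-t_1)$ from \eqref{ajf}, and absorbing the resulting $CC_0^{3/8}(t_2-t_1)^{3/4}$ term into $(t_2-t_1)$ by Young's inequality, one gets $\int_{t_1}^{t_2}\|F\|_{L^\infty}\,dt\le CC_0^{3/2}+\frac{N_1}{2C}(t_2-t_1)$ for $C_0$ small.

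The short-time range $(0,\sigma(T))$ is, I expect, the main obstacle. There \eqref{a1b} provides $\sigma\|\nabla u\|_{L^2}^2+\sigma^3\|\sqrt{\rho}\dot{u}\|_{L^2}^2\le CC_0^{3/4}$ and $\int_0^{\sigma(T)}(\sigma\|\sqrt{\rho}\dot{u}\|_{L^2}^2+\sigma^3\|\nabla\dot{u}\|_{L^2}^2)\,dt\le CC_0^{3/4}$ — small in $C_0$ but singular as $t\to0^+$ — while the time-weighted estimates \eqref{uv1}--\eqref{uv2} provide $\int_0^{\sigma(T)}t^{1-\beta}\|\sqrt{\rho}\dot{u}\|_{L^2}^2\,dt\le C$ and $\int_0^{\sigma(T)}t^{2-\beta}\|\nabla\dot{u}\|_{L^2}^2\,dt\le C$ — merely finite but less singular. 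Taking geometric means of the two weight families, e.g. $\int_0^{\sigma(T)}t^{w}\|\nabla\dot{u}\|_{L^2}^2\,dt\le\big(\int t^{2-\beta}\|\nabla\dot{u}\|_{L^2}^2\big)^{1-\phi}\big(\int t^{3}\|\nabla\dot{u}\|_{L^2}^2\big)^{\phi}\le CC_0^{3\phi/4}$ for small $\phi>0$ (and analogously for $\sqrt{\rho}\dot{u}$ via \eqref{uv1} and $B[0,\sigma(T)]$), and then applying Hölder in time to the pointwise bound for $\|F\|_{L^\infty}$, one checks that every residual factor $\int_0^{\sigma(T)}t^{-\delta}\,dt$ is finite — this is exactly where $\beta>\frac12$ is used, since $2\beta-1>0$ leaves room to choose the interpolation exponents small — and that every term carries a positive power of $C_0$; so $\int_0^{\sigma(T)}\|F\|_{L^\infty}\,dt\le CC_0^{\theta_0}$. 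Splitting an arbitrary $[t_1,t_2]$ at $\sigma(T)$ and adding the two bounds establishes \eqref{xgoal}, and hence \eqref{brho}, provided $C_0\le\ve_4$.
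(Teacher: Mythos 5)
Your proposal is correct and follows essentially the same route as the paper's proof: Zlotnik's lemma applied to $D_t\rho$, the split of $[0,T]$ at $\sigma(T)$, the interpolation $\|F\|_{L^\infty}\le C\|F\|_{L^6}^{1/2}\|\nabla F\|_{L^6}^{1/2}+C\|F\|_{L^6}$ combined with Lemma~\ref{lem-f-td} and the smallness of the electric-field terms, the geometric-mean interpolation between the $O(1)$ time-weighted bounds \eqref{uv1}--\eqref{uv2} and the $O(C_0^{3/4})$ bounds \eqref{a1b} on $(0,\sigma(T))$ (where $\beta>\tfrac12$ is used exactly as you indicate), and \eqref{a06} together with \eqref{ajf} on $(1,T)$. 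The only (harmless) variations are your single application of Zlotnik's lemma with the $x$-independent $g(\zeta)=-\frac{a}{\lambda+2\mu}\zeta(\zeta^{\gamma}-\bar{\rho}^{\gamma})$ and the fixed constant $N_1=-g(\hat{\rho})$ --- which in fact tidies up the paper's $x$-dependent choice $g(\rho)=-\rho(P-P_s)/(2\mu+\lambda)$ --- versus the paper's two successive applications (first with $N_1=0$ yielding the intermediate bound $3\hat{\rho}/2$ on $[0,\sigma(T)]$, then with $N_1=C_8C_0^{1/12}$ on $[\sigma(T),T]$).
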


\begin{proof}
First, the equation of  mass conservation $\eqref{NSP}_1$ can be equivalently rewritten in the form
\begin{align}\label{rho1}
\displaystyle  D_t \n=g(\rho)+b'(t),
\end{align}
where
\begin{align}
 \displaystyle D_t\rho\triangleq\rho_t+u \cdot\nabla \rho ,\quad
g(\rho)\triangleq-\frac{\rho(P-P_s)}{2\mu+\lambda}  ,
\quad b(t)\triangleq-\frac{1}{2\mu+\lambda} \int_0^t\rho Fdt.
 \end{align}
Naturally, we shall prove our conclusion by Lemma \ref{lem-z}. It is sufficient to check that the function $b(t)$ must verify \eqref{a100} with some suitable constants $N_0$, $N_1$.

For $t\in[0,\sigma(T)],$ one deduces from \eqref{g1}, \eqref{g2}, \eqref{tdf1}, \eqref{tdxd-u1}, \eqref{udot}, \eqref{key1} and Lemmas \ref{lem-basic}, \ref{lem-a1} that for $\delta_0$ as in Proposition \ref{pr1} and for all $0\leq t_1\leq t_2\leq\sigma(T)$,
\begin{align}\label{bl1}
&\quad |b(t_2)-b(t_1)| =\frac{1}{\lambda+2\mu}\left|\int_{t_1}^{t_2}\rho Fdt\right|\le C\int_0^{\sigma(T)}\|F\|_{L^{\infty}}dt \nonumber\\
& \le C\int_0^{\sigma(T)}\|F\|_{L^{6}}^{\frac{1}{2}}\|\nabla F\|_{L^{6}}^{\frac{1}{2}}dt+C\int_0^{\sigma(T)}\|F\|_{L^{2}}dt\nonumber\\
& \le C\int_0^{\sigma(T)}(\|\sqrt{\rho}\dot{u}\|_{L^2}+C_0^{\frac{1}{6}}+\|\nabla u\|_{L^2})^{\frac{1}{2}}(\|\nabla \dot{u}\|_{L^{2}}+\|\nabla u\|_{L^2}^2+C_0^{\frac{1}{6}})^{\frac{1}{2}}dt+CC_0^{\frac{1}{2}}\nonumber\\
& \leq C\int_0^{\sigma(T)}\|\sqrt{\rho}\dot{u}\|_{L^2}^{\frac{1}{2}}\|\nabla \dot{u}\|_{L^{2}}^{\frac{1}{2}}dt
+ C\int_0^{\sigma(T)}\|\sqrt{\rho}\dot{u}\|_{L^2}^{\frac{1}{2}}\|\nabla u\|_{L^{2}}dt\nonumber\\
& \quad + C\int_0^{\sigma(T)}\|\nabla u\|_{L^2}^{\frac{1}{2}}\|\nabla \dot{u}\|_{L^{2}}^{\frac{1}{2}}dt+ CC_0^{\frac{1}{12}}\nonumber\\
&\leq C\int_0^{\sigma(T)}\big(t^{2-\beta}\|\sqrt{\rho}\dot{u}\|_{L^2}^2\big)^{\frac{1-3\delta_0}{4}}\big(t\|\sqrt{\rho}\dot{u}\|_{L^2}^2\big)^{\frac{3\delta_0}{4}}\big(t^{2-\beta}\|\nabla \dot{u}\|_{L^{2}}^2\big)^{\frac{1}{4}}t^{\frac{(\beta-2)(2-3\delta_0)-3\delta_0}{4}}dt\nonumber\\
& \quad +C\int_0^{\sigma(T)}\big(t^{1-\beta}\|\sqrt{\rho}\dot{u}\|_{L^2}^2\big)^{\frac{1}{4}}\big(t\|\nabla u\|_{L^{2}}^2\big)^{\frac{1}{2}}t^{\frac{2\beta-3}{4}}dt \nonumber \\
& \quad +C\int_0^{\sigma(T)}\big(t^{2-\beta}\|\nabla\dot{u}\|_{L^2}^2\big)^{\frac{1}{4}}\big(t\|\nabla u\|_{L^{2}}^2\big)^{\frac{1}{4}}t^{\frac{\beta-3}{4}}dt+CC_0^{\frac{1}{12}}\nonumber \\
& \leq C_6C_0^{\delta_0/4}.
\end{align}
Combining \eqref{bl1} with \eqref{rho1} and choosing $N_1=0$, $N_0=C_6C_0^{\delta_0/4}$, $\bar{\zeta}=\hat{\rho}$ in Lemma \ref{lem-z} give
\begin{align}\label{rho2}
\displaystyle  \sup_{t\in [0,\si(T)]}\|\rho\|_{L^\infty} \le \hat{\rho}
+C_1C_0^{\delta_0/4} \le\frac{3\hat{\rho}}{2},
\end{align}
provided $C_0\le \hat{\ve}_4\triangleq\min\{\varepsilon_3, \left(\frac{\hat{\rho}}{2C_6}\right)^{\frac{4}{\delta_0}}\}. $

On the other hand, for any $1 \le t_1\le t_2\le T ,$ it follows from \eqref{tdf1}, \eqref{udot}, \eqref{key1} and Lemma \ref{lem-basic} that
\begin{align}\label{br1}
& \quad |b(t_2)-b(t_1)| \le C\int_{t_1}^{t_2}\|F\|_{L^{\infty}}dt \nonumber\\
&\le C\int_{t_1}^{t_2}\|F\|_{L^{6}}^{\frac{1}{2}}\|\nabla F\|_{L^{6}}^{\frac{1}{2}}dt+C\int_{t_1}^{t_2}\|F\|_{L^{2}}dt\nonumber\\
&\le CC_0^{\frac{1}{12}}\int_{t_1}^{t_2}(\|\nabla \dot{u}\|_{L^{2}}^{\frac{1}{2}}+1)dt \nonumber\\
&\le C_7C_0^\frac{3}{4}+C_8C_0^\frac{1}{12}(t_2-t_1).
\end{align}
Now we choose $N_0=C_7C_0^{3/4}$, $N_1=C_8C_0^\frac{1}{12}$ in \eqref{a100} and set $\bar\zeta= \frac{3\hat{\rho}}{2}$ in \eqref{a101}. Since for all $  \zeta \geq\bar{\zeta}=\frac{3\hat{\rho}}{2}>\rho_s+1$,
$$ g(\zeta)%=-\frac{ a\zeta}{2\mu+\lambda}(\zeta^{\gamma}-\rho_s^{\gamma})
\le -\frac{a}{\lambda+2\mu}\leq -C_8C_0^\frac{1}{12} = -N_1. $$
Together with \eqref{rho1} and \eqref{br1}, by Lemma \ref{lem-z}, we have
\begin{align}\label{rho3}
\displaystyle \sup_{t\in
[\si(T),T]}\|\rho\|_{L^\infty}\le \frac{ 3\hat \rho }{2} +C_7C_0^{3/4} \le
\frac{7\hat \rho }{4},
\end{align}
provided $C_0\le \ve_4 \triangleq\min\{\hat{\ve}_4, (\frac{ \hat \n }{4C_7})^{4/3}, (\frac{a}{(2\mu+\lambda)C_8})^{12} \}$.
The combination of \eqref{rho2} with \eqref{rho3} completes the
proof of Lemma \ref{lem-brho}.
\end{proof}

%\section{\label{se4} A priori estimates (II): higher order estimates }
The following Lemmas deal with some necessary higher order estimates, which make sure that one can extend the strong solution globally in time. The proofs are similar to the ones in \cite{Hoff1995,HLX2012,lx2016,cl2019}, and are sketched here for
completeness.
From now on, we always assume that the initial energy $C_0\leq \ve_6$, and the positive constant $C $ may depend on $T$, $\mu$, $\lambda$, $a$, $\ga$, $\on,$ $\hat{\rho},$ $\Omega$, $M$, $\rho_s, \Phi_s$ and $g$, where $g\in L^2(\Omega)$ is given as in \eqref{dt3}.

\begin{lemma}\label{lem-x1}
 There exists a positive constant $C,$ such that
\begin{align}
&\sup_{0\le t\le T}(\|\nabla u\|_{L^2}^2+\|\sqrt{\rho}\dot{u}\|_{L^2}^2)+\int_0^T(\|\sqrt{\rho}\dot{u}\|_{L^2}^2+\|\nabla\dot{u}\|_{L^2}^{2})dt\leq C,\label{x1b1}\\
&\sup_{0\le t\le T}(\|\nabla\rho\|_{L^6}+\|u\|_{H^2})+\int_0^T(\|\nabla u\|_{L^\infty}+\|\nabla^{2} u\|_{L^6}^{2})dt\leq C,\label{x2b1}\\
& \sup_{0\le t\le T}\|\sqrt{\rho}u_t\|_{L^2}^2 + \int_0^T\int|\nabla u_t|^2dxdt\le C. \label{x3b}
\end{align}
\end{lemma}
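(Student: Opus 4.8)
\textbf{Proof strategy for Lemma \ref{lem-x1}.}
The plan is to build on the time-independent bounds of Lemma \ref{lem-brho} and Proposition \ref{pr1} (so now $\rho\leq 2\hat\rho$ and $A_1(T)\leq C_0^{1/2}$ hold on all of $[0,T]$), and upgrade them to $T$-dependent estimates by absorbing the time weights $\sigma(t)$ at the cost of constants that may blow up as $T\to\infty$. First I would revisit the two differential inequalities \eqref{I01} and \eqref{J02} from the proof of Lemma \ref{lem-a0}, but now with $m=0$ (no time weight). Integrating \eqref{I01} with $m=0$ over $(0,T)$ and using the compatibility condition \eqref{dt3} to bound $\|\sqrt{\rho}\dot u(0)\|_{L^2}=\|g\|_{L^2}$, together with Lemma \ref{lem-basic}, gives control of $\sup_t\|\nabla u\|_{L^2}^2+\int_0^T\|\sqrt\rho\dot u\|_{L^2}^2\,dt$ in terms of $C_0$, $\|g\|_{L^2}$ and $\int_0^T(\|\nabla u\|_{L^3}^3+\|\nabla u\|_{L^4}^4)\,dt$. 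The latter term is then handled exactly as in \eqref{a4}/\eqref{a09}: using \eqref{tdu2} to interpolate $\|\nabla u\|_{L^3}$ and $\|\nabla u\|_{L^4}$ between $\|\nabla u\|_{L^2}$ and $\|\sqrt\rho\dot u\|_{L^2}$, then applying Young's inequality and the already-established bounds $A_1(T)\leq C_0^{1/2}$, $\int_0^T\|\nabla u\|_{L^2}^2\,dt\leq CC_0$. Integrating \eqref{J02} with $m=0$ over $(0,T)$ in the same way (again invoking \eqref{dt3} for the initial value of $\|\sqrt\rho\dot u\|_{L^2}$, and \eqref{J0b1} for the boundary term) then yields \eqref{x1b1}.

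For \eqref{x2b1}, I would first differentiate the mass equation $\eqref{NSP}_1$: applying $\nabla$ gives a transport equation for $\nabla\rho$ with source terms involving $\nabla^2 u$ and $\nabla u\cdot\nabla\rho$. Testing against $|\nabla\rho|^4\nabla\rho$ produces
$$\frac{d}{dt}\|\nabla\rho\|_{L^6}\leq C(1+\|\nabla u\|_{L^\infty})\|\nabla\rho\|_{L^6}+C\|\nabla^2 u\|_{L^6}.$$
The key here is to control $\|\nabla u\|_{L^\infty}$ via the Beale-Kato-Majda-type inequality (Lemma \ref{lem-bkm}), which bounds $\|\nabla u\|_{L^\infty}$ logarithmically in terms of $\|\div u\|_{L^\infty}+\|\omega\|_{L^\infty}$ and $\|\nabla^2 u\|_{L^q}$, hence ultimately in terms of $\|\rho\dot u\|$, $\|\nabla(\Phi-\Phi_s)\|$ and $\log(e+\|\nabla\rho\|_{L^6})$; meanwhile $\|\nabla^2 u\|_{L^6}$ is controlled by \eqref{2tdu} together with \eqref{phi-hk} and $\|\nabla\rho\|_{L^6}$. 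This sets up a Gronwall argument of log-type for $\|\nabla\rho\|_{L^6}$, after which $\int_0^T\|\nabla u\|_{L^\infty}\,dt$ and $\int_0^T\|\nabla^2 u\|_{L^6}^2\,dt$ follow; the bound $\|u\|_{H^2}$ then comes from \eqref{2tdu} with $p=2$ plus \eqref{x1b1}, and $\|\nabla\rho\|_{L^2}$, $\|P-P_s\|$ in $W^{2,q}$-type norms are propagated alongside.

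Finally, \eqref{x3b} is obtained by differentiating $\eqref{NSP}_2$ in $t$, testing with $u_t$, and integrating over $\Omega\times(0,T)$. Writing $\dot u=u_t+u\cdot\nabla u$, one has $\|\sqrt\rho u_t\|_{L^2}\leq\|\sqrt\rho\dot u\|_{L^2}+C\|\nabla u\|_{L^2}\|\nabla u\|_{L^3}$ (or similar), so \eqref{x1b1} and \eqref{x2b1} immediately give the pointwise-in-time bound on $\|\sqrt\rho u_t\|_{L^2}$; for $\int_0^T\|\nabla u_t\|_{L^2}^2\,dt$ one uses $\nabla u_t=\nabla\dot u-\nabla(u\cdot\nabla u)$ together with $\int_0^T\|\nabla\dot u\|_{L^2}^2\,dt\leq C$ from \eqref{x1b1} and the $H^2$-bound on $u$ from \eqref{x2b1}. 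The new terms arising from time-differentiation — those containing $\rho_t=-\div(\rho u)$ and the Poisson coupling $\nabla(\Phi-\Phi_s)_t$ — are handled by \eqref{phi-dt}, \eqref{phi-dt2} and the already-established bounds. The main obstacle is the estimate for $\|\nabla\rho\|_{L^6}$ in \eqref{x2b1}: the density gradient is only transported, not diffused, so one cannot avoid the logarithmic BKM inequality, and closing the Gronwall loop requires carefully matching the $\|\nabla^2 u\|_{L^6}$ bound (which itself depends on $\|\nabla\rho\|_{L^6}$ through $\|\nabla(P-P_s)\|_{L^6}$) against the transport growth — this is where the coupling with the electric field, via the extra terms in \eqref{2tdu} and \eqref{3tdu}, must be shown not to destroy the argument.
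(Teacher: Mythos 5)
Your proposal is correct and follows essentially the same route as the paper: the paper obtains the first bound by taking $s=1$ in \eqref{uv1} together with \eqref{a1a2} and \eqref{ajf} (rather than re-integrating \eqref{I01} with $m=0$, though that amounts to the same thing), then integrates \eqref{J02} with $m=0$ using the compatibility condition \eqref{dt3} exactly as you describe, derives \eqref{x2b1} via the Beale--Kato--Majda inequality of Lemma \ref{lem-bkm} with a logarithmic Gronwall argument for $\|\nabla\rho\|_{L^6}$, and deduces \eqref{x3b} directly from $u_t=\dot u-u\cdot\nabla u$. No gaps.
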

\begin{proof}
First, taking $s=1$ in \eqref{uv1} along with \eqref{a1a2} and \eqref{ajf} gives
\begin{equation}\label{x1b0}
\displaystyle \sup_{t\in[0,T]}\|\nabla u\|_{L^2}^2 + \int_0^{T}\int\rho|\dot{u}|^2dxdt
  \le C.
\end{equation}
Choosing $m=0$ in \eqref{J02}, integrating it over $(0,T)$, by \eqref{phi-hk}, \eqref{basic1}, \eqref{x1b0} and the compatibility condition \eqref{dt3}, we have
\begin{align*}
\sup_{0\le t\le T}\|\sqrt{\rho}\dot{u}\|_{L^2}^2+\int_0^T\|\nabla\dot{u}\|_{L^2}^{2}dt\leq C+C\int_0^T \|\sqrt{\rho}\dot{u}\|_{L^2}^3 dt\leq C+\frac{1}{2}\sup_{0\le t\le T}\|\sqrt{\rho}\dot{u}\|_{L^2}^2,
\end{align*}
which along with \eqref{x1b0} gives \eqref{x1b1}.
Based on the Beale-Kato-Majda type inequality (see Lemma \ref{lem-bkm}), we can derive \eqref{x2b1}, in arguments similar to \cite{cl2019} and we omit the details.
\eqref{x1b1} and \eqref{x2b1} directly yields \eqref{x3b}.
This finishes the proof.
\end{proof}

\begin{lemma}\label{lem-x3}
There exists a positive constant $C$ such that
\begin{align}
& \sup\limits_{0\le t\le T}\left(\|\rho\|_{H^2}\! +\!
 \|P\|_{H^2}\!+\!
   \|\rho_t\|_{H^1}\!+\!\|P_t\|_{H^1}\right)
    \!+\!\! \int_0^T\!\!\left(\|\n_{tt}\|_{L^2}^2\!+\!\|P_{tt}\|_{L^2}^2\right)dt
\le C, \label{x4b} \\
& \sup\limits_{0\le t\le T}\sigma \|\nabla u_t\|_{L^2}^2
    + \int_0^T\sigma\|\sqrt{\rho}u_{tt}\|_{L^2}^2dt
\le C,\label{x4bb}\\
& \sup\limits_{0\le t\le T}\left(\|\nabla(\Phi-\! \Phi_s)\|_{H^3} +
   \|\nabla\Phi_t\|_{H^2}+\|\nabla\Phi_{tt}\|_{L^2}\right)
\le C.\label{x3-phi}
\end{align}
\end{lemma}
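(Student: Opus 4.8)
The plan is to proceed in the classical order: density regularity first, then the time derivative of the velocity, then the potential. For \eqref{x4b}, I would differentiate the mass equation $\eqref{NSP}_1$ in space to obtain transport equations for $\nabla\rho$, $\nabla^2\rho$ (and likewise for $P$, using \eqref{Pu1} and its spatial derivatives), multiply by $|\nabla^k\rho|^{p-2}\nabla^k\rho$ with $p=2$, and use Gronwall's inequality. The forcing terms involve $\nabla^k u$ up to $k=3$; I would control $\|\nabla^3 u\|_{L^2}$ (and $\|\nabla^2 u\|_{L^6}$) through the remark after Lemma \ref{lem-f-td}, i.e.\ \eqref{3tdu}, together with \eqref{x1b1}--\eqref{x2b1} and the elliptic estimates \eqref{phi-hk}. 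The Gronwall coefficient is $\int_0^T(\|\nabla u\|_{L^\infty}+\|\nabla^2 u\|_{L^6}^2)\,dt$, which is finite by \eqref{x2b1}. The bounds on $\rho_t,P_t$ in $H^1$ then follow directly from $\rho_t=-\div(\rho u)$ and the $H^2$-control of $\rho$ plus the $H^2$-control of $u$; differentiating once more in $t$ and using $u_t$-bounds from \eqref{x3b} gives the $L^2(0,T;L^2)$ bound on $\rho_{tt},P_{tt}$.

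For \eqref{x4bb}, I would differentiate $\eqref{NSP}_2$ in time, test with $u_{tt}$, and integrate over $\Omega\times(0,T)$ against the weight $\sigma$. This produces, after integration by parts, the term $\frac{d}{dt}\int\sigma(\mu|\nabla u_t|^2+(\mu+\lambda)(\div u_t)^2)$ plus $\int\sigma\rho|u_{tt}|^2$ on the good side; the remaining terms involve $\rho_t|u_t||u_{tt}|$, $(\rho u)_t\cdot\nabla u\cdot u_{tt}$, $P_t\div u_{tt}$ (integrated by parts to $\nabla P_t\cdot u_{tt}$ or handled via $\frac{d}{dt}$), and the Poisson-coupling terms $\partial_t(\rho\nabla(\Phi-\Phi_s))$ and $\partial_t((\rho-\rho_s)\nabla\Phi_s)$. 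Each is estimated by Hölder, Sobolev, \eqref{phi-hk}, \eqref{phi-dtk}, and the already-established bounds \eqref{x1b1}--\eqref{x4b}, absorbing $\|\sqrt\rho u_{tt}\|_{L^2}^2$ and $\|\nabla u_t\|_{L^2}^2$ on the left. The boundary integrals arising from $\curl u_t\times n$ and $u_t\cdot n$ on $\partial\Omega$ are handled exactly as in Lemma \ref{lem-a0} via \eqref{bdd1}; because $u\cdot n=0$ and $\curl u\times n=0$ are linear and time-independent, $u_t$ satisfies the same slip conditions and this causes no new difficulty. The weight $\sigma$ is needed because at $t=0$ we only control $\sqrt\rho\dot u(0)\in L^2$ (via \eqref{dt3}), not $\nabla u_t(0)$.

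For \eqref{x3-phi}, the Poisson equation \eqref{phi2} and its $t$- and $tt$-differentiated versions give $\|\nabla(\Phi-\Phi_s)\|_{H^3}\le C\|\rho-\rho_s\|_{H^2}$, $\|\nabla\Phi_t\|_{H^2}\le C\|\div(\rho u)\|_{H^1}\le C(\|\rho u\|_{H^2})$, and $\|\nabla\Phi_{tt}\|_{L^2}\le C\|\div(\rho u)_t\|_{H^{-1}}\le C\|(\rho u)_t\|_{L^2}$, all via Lemma \ref{lem-phi} and the classical Neumann elliptic theory; the right-hand sides are bounded by \eqref{x4b}, \eqref{x2b1} and \eqref{x3b}. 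The main obstacle is \eqref{x4bb}: one must carefully treat the term $\int\sigma\,\partial_t(\rho u\cdot\nabla u)\cdot u_{tt}$, which after expansion contains $\rho u_t\cdot\nabla u\cdot u_{tt}$ and $\rho u\cdot\nabla u_t\cdot u_{tt}$ — the latter is borderline and requires using $\|u\|_{L^\infty}\le C\|u\|_{H^2}$ from \eqref{x2b1} together with the $\sigma$-weighted control of $\|\nabla u_t\|_{L^2}$, plus an integration by parts in $t$ on some pieces to convert $u_{tt}$ into $u_t$ at the cost of a $\frac{d}{dt}$ term, exactly as in \cite{HLX2012,cl2019}. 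Since the computations are routine modifications of those references with the extra Poisson terms absorbed by \eqref{phi-hk}--\eqref{phi-dtk}, I would only sketch them and refer to those papers for the details.
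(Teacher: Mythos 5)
Your plan is correct and follows essentially the same route as the paper, which itself only records the key elliptic estimate $\|\nabla^3u\|_{L^2}\le C\|\nabla\dot u\|_{L^2}+C\|\nabla^2P\|_{L^2}+C$ (your use of \eqref{3tdu} with \eqref{x1b1}--\eqref{x2b1} and \eqref{phi-hk}) and then defers the transport--Gronwall argument for \eqref{x4b} and the $\sigma$-weighted $u_{tt}$ energy estimate for \eqref{x4bb} to \cite{cl2019}, treating \eqref{x3-phi} exactly as you do via $\Delta\Phi_t=\rho_t$, $\Delta\Phi_{tt}=-\div(\rho_tu+\rho u_t)$ and Lemma \ref{lem-phi}. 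The only cosmetic remark is that with the slip condition the quadratic form produced by testing with $u_{tt}$ is naturally $(\lambda+2\mu)(\div u_t)^2+\mu|\curl u_t|^2$ (equivalent to yours by Lemma \ref{lem-vn} since $u_t\cdot n=0$), and the boundary integrals vanish outright because $u_t$ inherits \eqref{navier-b}, so the \eqref{bdd1} device is not actually needed there.
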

\begin{proof} From \eqref{2tdu}, \eqref{3tdu}, \eqref{phi-hk}, \eqref{udot} and Lemma \ref{lem-x1}, we have
\begin{equation}
 \|\nabla^3 u\|_{L^2}\leq C \|\nabla\dot{u}\|_{L^2}+C \|\nabla^2 P\|_{L^2}+C,
\end{equation}
which can help us to get \eqref{x4b} by the same method as that in \cite{cl2019}. So is \eqref{x4bb} and we omit the details.
From \eqref{NSP}, we have
\begin{equation}\label{x-phi}
\Delta \Phi_t=\rho_t,\qquad \Delta\Phi_{tt}=-\div (\rho_t u+\rho u_t).
\end{equation}
With the similar arguments used in Lemma \ref{lem-phi}, combining \eqref{x4b} with \eqref{phi-hk} and \eqref{x-phi} leads to \eqref{x3-phi}.
The proof is completed.
\end{proof}

\begin{lemma}\label{lem-x5}
There exists a positive constant $C$ so that for any $q\in(3,6),$
\begin{align}
& \sup_{t\in[0,T]} \si \|\nabla u\|_{H^2}^2+\int_0^T \left(\|\nabla u\|_{H^2}^2+\|\na^2 u\|^{p_0}_{W^{1,q}}+\si\|\na u_t\|_{H^1}^2\right)dt\le C,\label{x5bb}\\
& \sup_{t\in[0,T]}\left(\|\rho- \rho_s\|_{W^{2,q}} +\|P-P_s\|_{W^{2,q}}+\|\nabla(\Phi-\Phi_s)\|_{W^{3,q}}\right)\le C,\label{x5b}
\end{align}
where $p_0=\frac{9q-6}{10q-12}\in(1,\frac{7}{6}).$
\end{lemma}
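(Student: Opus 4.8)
The plan is to establish the bounds in three successive passes: first the $L^2$-type bounds $\sup_t\si\|\na u\|_{H^2}^2$, $\int_0^T\|\na u\|_{H^2}^2\,dt$ and $\int_0^T\si\|\na u_t\|_{H^1}^2\,dt$ from \eqref{x5bb}; then the $W^{2,q}$ bounds \eqref{x5b}; and finally the remaining term $\int_0^T\|\na^2u\|_{W^{1,q}}^{p_0}\,dt$ from \eqref{x5bb}. Throughout $C$ may depend on $T$, and we use Lemmas \ref{lem-x1}--\ref{lem-x3} freely.

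For the first pass, apply \eqref{3tdu} with $p=2$: every term on its right side except $\|\na\dot u\|_{L^2}$ is bounded by Lemmas \ref{lem-x1}, \ref{lem-x3} together with \eqref{phi-hk}, so $\|\na^3u\|_{L^2}\le C(\|\na\dot u\|_{L^2}+1)$; writing $\dot u=u_t+u\cdot\na u$ and using $\|u\|_{H^2}\le C$ from \eqref{x2b1} gives $\|\na\dot u\|_{L^2}\le C(\|\na u_t\|_{L^2}+1)$, whence $\si\|\na^3u\|_{L^2}^2\le C$ by \eqref{x4bb} and $\int_0^T\|\na^3u\|_{L^2}^2\,dt\le C$ by \eqref{x1b1}. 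For the $u_t$-term, differentiate the momentum equation \eqref{NSP-2} in time; since $u_t\cdot n=0$ and $\curl u_t\times n=0$ on $\pa\O$, the $L^2$ div--curl/elliptic estimate (as in Lemmas \ref{lem-f-td}, \ref{lem-vn}) applied to $u_t$ yields $\|\na^2u_t\|_{L^2}\le C(\|\sqrt{\n}u_{tt}\|_{L^2}+\|\na u_t\|_{L^2}+1)$, after bounding each forcing term with Lemmas \ref{lem-x1}, \ref{lem-x3}, \eqref{phi-hk}, \eqref{phi-dt} and using $\|u_t\|_{L^2}\le C\|\na\dot u\|_{L^2}+C\le C\|\na u_t\|_{L^2}+C$ via \eqref{udot}; multiplying by $\si$, integrating over $[0,T]$ and invoking \eqref{x4bb}, \eqref{x3b} gives $\int_0^T\si\|\na u_t\|_{H^1}^2\,dt\le C$.

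For \eqref{x5b}, since $\n_s,P_s$ have bounded $W^{2,q}$ norm by Lemma \ref{lem-s}, it suffices to control $\phi(t)\triangleq\|\na^2\n\|_{L^q}+\|\na^2P\|_{L^q}$. Apply $\na^2$ to $\eqref{NSP}_1$ and to the transport equation $P_t+u\cdot\na P+\ga P\div u=0$, test against $|\na^2\n|^{q-2}\na^2\n$ and $|\na^2P|^{q-2}\na^2P$, integrate over $\O$, discard the boundary term using $u\cdot n=0$, use $W^{1,q}\hookrightarrow L^\infty$ ($q>3$) to get $\|\na\n\|_{L^\infty}\le C(1+\phi)$, and pass between $\na^2\n$ and $\na^2P$ via $P=a\n^\ga$; this yields
\begin{equation*}
\frac{d}{dt}\phi(t)\le C\bigl(1+\|\na u\|_{L^\infty}+\|\na^2u\|_{L^q}\bigr)\bigl(1+\phi(t)\bigr)+C\|\na^3u\|_{L^q}.
\end{equation*}
By \eqref{3tdu} with $p=q$, \eqref{phi-hk} and $W^{1,q}\hookrightarrow L^\infty$ one checks $\|\na^3u\|_{L^q}\le C(1+\phi)(\|\na\dot u\|_{L^q}+1)$, so the inequality becomes $\frac{d}{dt}\phi\le C\bigl(1+\|\na u\|_{L^\infty}+\|\na^2u\|_{L^q}+\|\na\dot u\|_{L^q}\bigr)(1+\phi)$. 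Now $\int_0^T\bigl(\|\na u\|_{L^\infty}+\|\na^2u\|_{L^q}+\|\na\dot u\|_{L^q}\bigr)dt<\infty$: the first by \eqref{x2b1}; the second since Gagliardo--Nirenberg gives $\|\na^2u\|_{L^q}\le C\|\na^3u\|_{L^2}^{\te}+C$ with $\te=\tfrac{3(q-2)}{2q}\in(\tfrac12,1)$, so it is $L^1$ in $t$ by the first pass; the third using $\|\na\dot u\|_{L^q}\le C\|\na u_t\|_{L^2}^{1-\te}\|\na^2u_t\|_{L^2}^{\te}+C\|\na u_t\|_{L^2}+C\|\na u\|_{L^{2q}}^2+C\|\na^2u\|_{L^q}$ and splitting $\int_0^T(\cdot)dt$ by Hölder against $(\si\|\na^2u_t\|_{L^2}^2)^{\te/2}$, $(\|\na u_t\|_{L^2}^2)^{(1-\te)/2}$ and $\si^{-\te/2}\in L^2$ (valid as $\te<1$), using the first pass and \eqref{x3b}. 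Gronwall then gives $\sup_{[0,T]}\phi\le C$, i.e. the $\n$- and $P$-bounds in \eqref{x5b}, and the $W^{3,q}$ bound on $\na(\Phi-\Phi_s)$ follows from \eqref{phi-hk} with $k=2$.

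For the last term of \eqref{x5bb}, \eqref{x5b} and \eqref{3tdu} now give $\|\na^2u\|_{W^{1,q}}\le C(\|\na\dot u\|_{L^q}+1)$, so it remains to bound $\int_0^T\|\na\dot u\|_{L^q}^{p_0}\,dt$. Decomposing $\na\dot u$ as above and bounding $\|\na u\|_{L^{2q}}$, $\|\na^2u\|_{L^q}$ by powers of $\|\na^3u\|_{L^2}$ (Gagliardo--Nirenberg), the delicate piece is $\int_0^T\|\na u_t\|_{L^2}^{(1-\te)p_0}\|\na^2u_t\|_{L^2}^{\te p_0}\,dt$, which by Hölder with exponents $\tfrac{2}{\te p_0},\ \tfrac{2}{(1-\te)p_0},\ \tfrac{2}{2-p_0}$ is finite provided $\tfrac{\te p_0}{2-p_0}<1$, i.e. $p_0(1+\te)<2$; since $1+\te=\tfrac{5q-6}{2q}$ this reads $p_0<\tfrac{4q}{5q-6}$, which holds for $p_0=\tfrac{9q-6}{10q-12}$ and $q\in(3,6)$ (the inequality reducing to $5q^2-36q+36<0$). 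This finishes \eqref{x5bb} and \eqref{x5b}. I expect this third pass — and the companion bound $\int_0^T\|\na\dot u\|_{L^q}\,dt<\infty$ needed to close the Gronwall for \eqref{x5b} — to be the main obstacle: only the weighted quantity $\si\|\na^2u_t\|_{L^2}^2$ is known to lie in $L^1_t$, so extracting a power $p_0>1$ of $\|\na^2u\|_{W^{1,q}}$ still $L^1$ in time requires a tight interpolation against the weight $\si^{-\te/2}$, singular at $t=0$, and it is exactly this balance that pins down the admissible $p_0$.
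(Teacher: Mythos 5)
Your proposal is correct and follows essentially the same route as the paper: $L^2$ elliptic estimates for $\nabla^2u$ and $\nabla^2u_t$ (via the time-differentiated momentum system), then a transport-equation Gronwall argument for the $W^{2,q}$ norms of $\rho$ and $P$, and finally the interpolation of $\|\nabla u_t\|_{L^q}$ between $\|\nabla u_t\|_{L^2}$ and the $\sigma$-weighted $\|\nabla u_t\|_{H^1}$, which yields exactly the paper's constraint $p_0<\frac{4q}{5q-6}$ (cf.\ \eqref{x5b7}--\eqref{x5b10}). The only differences are organizational (you work with $\nabla\dot u$ rather than $\nabla(\rho\dot u)$ and place the velocity terms in the multiplicative Gronwall coefficient rather than the additive source), and your identification of the interpolation against the singular weight $\sigma^{-\theta/2}$ as the step pinning down $p_0$ matches the paper's mechanism.
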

\begin{proof}
Let's start with \eqref{x5bb}.  By Lemma \ref{lem-x1}, \eqref{x4bb}, \eqref{x3-phi} and Poincar\'{e}'s, Sobolev's inequalities, one can check that
% \begin{align}\label{x5b1}
%  \|\nabla (\n \dot u) \|_{L^2}&\le
%  \||\nabla \n ||  u_t|  \|_{L^2}\!+ \!\|\n \nabla   u_t  \|_{L^2}\!
%  +\! \||\nabla \n|| u||\nabla u| \|_{L^2}\!+\! \|\n|\nabla  u|^2\|_{L^2}\!+\! \|  \n |u || \nabla^2 u| \|_{L^2}\nonumber \\
% %&\le \|\nabla \n \|_{L^3} \|  u_t  \|_{L^6}+ C\| \nabla   u_t  \|_{L^2} + C\| \nabla \n\|_{L^3}\| u\|_{L^\infty}\|\nabla u \|_{L^6}\nonumber \\&\quad + C\| \nabla  u\|_{L^3}\| \nabla  u\|_{L^6} + C\|     u \|_{L^\infty}\| \nabla^2 u  \|_{L^2}\\
% &\le C+C\| \nabla   u_t  \|_{L^2}.
% \end{align}
% Consequently, together with \eqref{x4bb} and Lemma \ref{lem-x1}, it yields
\begin{align}\label{x5b2}
\|\nabla^2 u\|_{H^1} &\le C (\|\rho \dot u\|_{H^1}\!+\!\|\rho\nabla(\Phi\!-\!\Phi_s)\|_{H^1}\!+\!\|(\rho\!-\!\rho_s)\nabla\Phi_s\|_{H^1}\!+\!\| P\!-\!P_s\|_{H^2}\!+\!\|u\|_{L^2})\nonumber \\
 &\le C+C \|\na  u_t\|_{L^2}.
\end{align}
It then follows from \eqref{x5b2}, \eqref{x3b} and \eqref{x4bb} that
\begin{align}\label{x5b3}
\displaystyle \sup\limits_{0\le
t\le T}\si\|\nabla  u\|_{H^2}^2+\ia \|\nabla  u\|_{H^2}^2dt \le
 C.
\end{align}
Next, we deduce from Lemmas \ref{lem-x1}-\ref{lem-x3} that
\begin{align}\label{x5b4}
\displaystyle  \|\na^2u_t\|_{L^2}
&\le C(\|(\rho\dot{u})_t\|_{L^2}+\|\nabla P_t\|_{L^2}+\|(\rho\nabla\Phi)_t\|_{L^2}+\|u_t\|_{L^2}) \nonumber \\
%&\le C(\|\n  u_{tt}+\n_t u_t+\n_t u\cdot\nabla u + \n u_t\cdot\nabla u+\n u\cdot\nabla u_t\|_{L^2})+C\|\nabla P_t\|_{L^2}\nonumber \\&\quad +C\|u_t\|_{L^2}+C(\|H\|_{L^\infty}\|\nabla H_t\|_{L^2}+\|\nabla H\|_{L^3}\|H_t\|_{L^6})\nonumber\\&\le C\left(\|\n  u_{tt}\|_{L^2}+ \|\n_t\|_{L^3}\|u_t\|_{L^6}+\|\n_t\|_{L^3}\| u\|_{L^\infty}\|\nabla u\|_{L^6}+ \| u_t\|_{L^6}\|\nabla u\|_{L^3}\right)+C\nonumber\\&\quad+C\left(\| u\|_{L^\infty}\|\nabla u_t\|_{L^2}+\|u_t\|_{L^2}+\|H\|_{L^\infty}\|\nabla H_t\|_{L^2}+\|\nabla H\|_{L^3}\|H_t\|_{L^6}\right)\nonumber \\
&\le C\|\sqrt{\rho}  u_{tt}\|_{L^2}+C\|\nabla  u_t\|_{L^2} +C,
\end{align}
where in the first inequality, we have utilized the $L^p$-estimate for the following elliptic system
\begin{equation}\label{x5b5}
\begin{cases}
  \mu\Delta u_t+(\lambda+\mu)\nabla\div u_t=(\rho\dot{u})_t+\nabla P_t+(\rho\nabla\Phi)_t \,\,\, &\text{in} \,\,\Omega,\\
  u_t\cdot n=0\,\,\,\text{and} \,\,\,\omega_t\times n=0\,\,&\text{on} \,\,\partial\Omega.
\end{cases}	
\end{equation}
Together with \eqref{x5b4} and \eqref{x4bb} yields
\begin{align}\label{x5b6}
\displaystyle  \int_0^T\sigma\|\nabla u_t\|_{H^1}^2dt\leq C.
\end{align}
By Sobolev's inequality, \eqref{udot}, \eqref{x2b1} and \eqref{x4bb}, we get for any $q\in (3,6)$,
\begin{align}\label{x5b7}
\displaystyle \|\na(\n\dot u)\|_{L^q}
&\le C \|\na \n\|_{L^q}(\|\nabla\dot{u}\|_{L^q}+\|\nabla\dot{u}\|_{L^2}+\|\nabla u\|_{L^2}^2)+C\|\na\dot u \|_{L^q}\nonumber\\
%&\le C (\|\nabla\dot{u}\|_{L^2}+\|\nabla u\|_{L^2}^2)+C(\|\na u_t \|_{L^q}+\|\na(u\cdot \na u ) \|_{L^q})\nonumber\\
&\le C (\|\nabla u_t\|_{L^2}+1)+C\|\na u_t \|_{L^2}^{\frac{6-q}{2q}}\|\nabla u_t\|_{L^6}^{\frac{3(q-2)}{2q}}\nonumber\\
&\quad+C(\|u \|_{L^\infty}\|\nabla^{2}u\|_{L^q}+\|\nabla u\|_{L^{\infty}}\|\nabla u\|_{L^q})\nonumber\\
&\le C\sigma^{-\frac{1}{2}}+C\|\nabla u\|_{H^2}+C\sigma^{-\frac{1}{2}}(\sigma\|\nabla u_t\|_{H^1}^2)^{\frac{3(q-2)}{4q}}+C.
\end{align}
Integrating this inequality over $[0,T],$ by \eqref{x1b1} and \eqref{x5b6}, we have
\begin{align}\label{x5b8}
\displaystyle  \int_0^T\|\nabla(\rho\dot{u})\|_{L^q}^{p_0}dt\leq C.
\end{align}
%which together with \eqref{x5b3} and \eqref{x5b6} gives \eqref{x5bb}.
On the other hand, the combination of  \eqref{Pu1} with \eqref{2tdu}, \eqref{3tdu}, \eqref{x1b1} and \eqref{x4bb} gives
\begin{align}\label{x5b9}
\displaystyle (\|\na^2 P\|_{L^q})_t & \le C \|\na u\|_{L^\infty} \|\na^2 P\|_{L^q}   +C  \|\na^2 u\|_{W^{1,q}}   \nonumber \\
& \le C (1+\|\na u\|_{L^\infty} )\|\na^2 P\|_{L^q}+C(1+ \|\na  u_t\|_{L^2})+ C\| \na(\n \dot u )\|_{L^{q}},
\end{align}
where in the last inequality we have used the  following simple fact that
\begin{align}\label{x5b10}\displaystyle \|\na^2 u\|_{W^{1,q}}
%&\le C(\|\rho\dot{u}\|_{L^q}+\|\nabla(\rho\dot{u})\|_{L^q}+\|\nabla^{2} P\|_{L^q}+\|\nabla P\|_{L^q}\nonumber \\&\quad+\|\nabla u\|_{L^2}+\|P-P_s\|_{L^2}+\|P-P_s\|_{L^q})\nonumber \\
&\le C(1 + \|\na  u_t\|_{L^2}+ \| \na(\n\dot u )\|_{L^{q}}+\|\na^2  P\|_{L^{q}}),\end{align}
due to \eqref{2tdu}, \eqref{3tdu}, \eqref{x1b1} and \eqref{x4bb}.
Hence, applying Gronwall's inequality in \eqref{x5b9}, we deduce from \eqref{x2b1}, \eqref{x3b}  and \eqref{x5b8} that
\begin{align}\label{x5b11}
\displaystyle  \sup_{t\in[0,T]}\|\nabla^{2}P\|_{L^q}\leq C ,
\end{align}
which along with \eqref{x3b}, \eqref{x4bb} and \eqref{x5b10} also gives
\begin{align}\label{x5b12}
\displaystyle  \sup_{t\in[0,T]}\|P-P_s\|_{W^{2,q}}+\int_0^T\|\nabla^{2}u\|_{W^{1,q}}^{p_0}dt\leq C .
\end{align}
Similarly, one has
\begin{align}
\displaystyle \sup\limits_{0\le t\le T}\|
\n-\rho_s\|_{W^{2,q}} \le
 C,
\end{align}
which together with \eqref{x5b12} and \eqref{phi-hk} gives \eqref{x5b}. The proof of Lemma \ref{lem-x5} is finished.
\end{proof}

\begin{lemma}\label{lem-x6}
There exists a positive constant $C$ such that
\begin{align}\label{x6b}
\displaystyle \sup_{0\le t\le T}\sigma\left(\|\sqrt{\rho} u_{tt}\|_{L^2}+\|\nabla u_t\|_{H^1}+\|\na u\|_{W^{2,q}}\right)+\int_{0}^T\sigma^2\|\nabla u_{tt}\|_{2}^2dt\le C ,
\end{align}
for any $q\in (3,6)$.
\end{lemma}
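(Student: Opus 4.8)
The plan is to control the three quantities $\si\|\sqrt{\rho}u_{tt}\|_{L^2}$, $\si\|\nabla u_t\|_{H^1}$ and $\si\|\nabla u\|_{W^{2,q}}$ in a cascade, relying on the lower order bounds of Lemmas \ref{lem-x1}--\ref{lem-x5}, on the div--curl elliptic estimate (Lemma \ref{lem-vn}) together with the Neumann estimates for the Poisson equation (Lemma \ref{lem-phi} and \eqref{x3-phi}), and on the Poincar\'e-type inequality $\|v\|_{L^2}\le C\big(\|\sqrt{\rho}v\|_{L^2}+\|\nabla v\|_{L^2}\big)$, which holds because the mass $\int\rho\,dx=\int\tilde{\rho}\,dx\ge\underline{\rho}|\Omega|>0$ is conserved in time (cf. \cite{cl2019,HLX2012}). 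As in the rest of this section $C$ is allowed to depend on $T$, so time-integrated factors such as $\int_0^T\|\rho_{tt}\|_{L^2}^2\,dt\le C$ (from \eqref{x4b}) and $\int_0^T\si\|\nabla u_t\|_{H^1}^2\,dt\le C$ (from \eqref{x5bb}) are at our disposal.

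The heart of the proof is the $u_{tt}$--estimate
\[
\sup_{0\le t\le T}\si^2\|\sqrt{\rho}u_{tt}\|_{L^2}^2+\int_0^T\si^2\|\nabla u_{tt}\|_{L^2}^2\,dt\le C.
\]
Differentiating $\eqref{NSP}_2$ in $t$ and using $\rho_t=-\div(\rho u)$ gives the evolution equation for $u_t$,
\[
\rho u_{tt}+\rho u\cdot\nabla u_t-\mu\Delta u_t-(\lambda+\mu)\nabla\div u_t=-\nabla P_t+(\rho\nabla\Phi)_t-\rho u_t\cdot\nabla u-\rho_t\dot u.
\]
Differentiating once more in $t$, recombining the density-derivative terms by mass conservation, multiplying by $\si^2 u_{tt}$ and integrating over $\Omega$, the left side produces $\tfrac12\tfrac{d}{dt}\int\si^2\rho|u_{tt}|^2dx$ together with the viscous contribution: writing $\mu\Delta+(\lambda+\mu)\nabla\div=(\lambda+2\mu)\nabla\div-\mu\nabla\times\curl$ and integrating by parts gives $\si^2\big((\lambda+2\mu)\|\div u_{tt}\|_{L^2}^2+\mu\|\curl u_{tt}\|_{L^2}^2\big)\ge c\,\si^2\|\nabla u_{tt}\|_{L^2}^2$ by Lemma \ref{lem-vn}. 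Every boundary integral drops out: differentiating the slip conditions \eqref{navier-b} in $t$, the boundary being time-independent, gives $u_{tt}\cdot n=0$ and $\omega_{tt}\times n=0$ on $\partial\Omega$. On the right side, $-\nabla P_{tt}$ pairs with $\si^2\div u_{tt}$ and is controlled by $\int_0^T\|P_{tt}\|_{L^2}^2dt\le C$; the Poisson term $(\rho\nabla\Phi)_{tt}=\rho_{tt}\nabla\Phi+2\rho_t\nabla\Phi_t+\rho\nabla\Phi_{tt}$ is handled with $\|\nabla(\Phi-\Phi_s)\|_{H^3}+\|\nabla\Phi_t\|_{H^2}+\|\nabla\Phi_{tt}\|_{L^2}\le C$ from \eqref{x3-phi} and $\int_0^T\|\rho_{tt}\|_{L^2}^2dt\le C$ from \eqref{x4b}; and the convective and density terms coming from $(\rho u_t\cdot\nabla u)_t$ and $(\rho_t\dot u)_t$ are expanded by the product rule and estimated through $\|\nabla u\|_{L^\infty}$, $\|u\|_{H^2}$, $\|\sqrt{\rho}\dot u\|_{L^2}$, $\|\sqrt{\rho}u_t\|_{L^2}$, $\si\|\nabla u_t\|_{L^2}^2$ and $\int_0^T\si\|\nabla u_t\|_{H^1}^2dt$ (\eqref{x1b1}--\eqref{x4bb}, \eqref{x5bb}). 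Wherever $u_{tt}$ enters without the $\sqrt{\rho}$--weight, in particular in $\rho_{tt}\nabla\Phi$ and $\rho_t\nabla\Phi_t$, I would apply the Poincar\'e inequality above and then absorb $\epsilon\,\si^2\|\nabla u_{tt}\|_{L^2}^2$ into the dissipation. The net structure is $\tfrac{d}{dt}y+c\,\si^2\|\nabla u_{tt}\|_{L^2}^2\le C\,y+h(t)$ with $y(t)=\si^2\|\sqrt{\rho}u_{tt}\|_{L^2}^2$, $y(0)=0$ and $\int_0^T h\,dt\le C$ (the single power of $\si$ arising from $(\si^2)_t$ is absorbed by $\int_0^T\si\|\sqrt{\rho}u_{tt}\|_{L^2}^2dt\le C$ from \eqref{x4bb}); Gronwall's inequality then yields the displayed bound.

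For the remaining two quantities I would use elliptic regularity. From \eqref{x5b4}--\eqref{x5b5}, $\|\nabla^2 u_t\|_{L^2}\le C\|\sqrt{\rho}u_{tt}\|_{L^2}+C\|\nabla u_t\|_{L^2}+C$; combined with the $u_{tt}$--estimate ($\|\sqrt{\rho}u_{tt}\|_{L^2}\le C\si^{-1}$) and \eqref{x4bb} ($\|\nabla u_t\|_{L^2}\le C\si^{-1/2}$), this gives $\|\nabla u_t\|_{H^1}\le C\si^{-1}$, i.e. $\sup_{0\le t\le T}\si\|\nabla u_t\|_{H^1}\le C$. Next, by the $W^{2,q}$ estimate \eqref{x5b10} for the momentum equation, $\|\nabla u\|_{W^{2,q}}\le C\big(1+\|\nabla u_t\|_{L^2}+\|\nabla(\rho\dot u)\|_{L^q}+\|\nabla^2 P\|_{L^q}\big)$; here $\|\nabla^2 P\|_{L^q}\le C$ by \eqref{x5b} and, using $\dot u=u_t+u\cdot\nabla u$, Sobolev's embedding, the bound $\|\nabla u_t\|_{H^1}\le C\si^{-1}$ just obtained and $\|\nabla u\|_{H^2}\le C\si^{-1/2}$ from \eqref{x5bb}, one estimates $\|\nabla(\rho\dot u)\|_{L^q}\le C\si^{-1}$ (as in \eqref{x5b7}, now with the pointwise bound on $\si\|\nabla u_t\|_{H^1}$). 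Hence $\|\nabla u\|_{W^{2,q}}\le C\si^{-1}$, i.e. $\sup_{0\le t\le T}\si\|\nabla u\|_{W^{2,q}}\le C$, and together with the first step this proves \eqref{x6b}.

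The main obstacle is the $u_{tt}$--energy estimate: organising the estimation of the many terms produced by the two time differentiations so that, after the weighted Poincar\'e inequality has been used on the vacuum-degenerate pieces, only a constant-coefficient Gronwall factor and a time-integrable forcing survive. The genuinely new difficulty relative to the barotropic case \cite{cl2019} is the Poisson contribution $(\rho\nabla\Phi)_{tt}$: the summands $\rho_{tt}\nabla\Phi$ and $\rho_t\nabla\Phi_t$ carry no density weight and must be tamed by combining that Poincar\'e inequality with the time-integrated bounds $\int_0^T\|\rho_{tt}\|_{L^2}^2dt\le C$ and $\|\nabla\Phi_t\|_{H^2}\le C$ from Lemma \ref{lem-x3}; it is essential here that the weight $\si^2$ vanishes at $t=0$, so that no information on $u_{tt}(\cdot,0)$ is required.
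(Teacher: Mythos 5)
Your proposal is correct and follows essentially the same route as the paper: differentiate the momentum equation twice in time, test against $u_{tt}$ with the $\sigma^2$ weight, use the div--curl estimate (Lemma \ref{lem-vn}) with $u_{tt}\cdot n=0$, $\omega_{tt}\times n=0$ for coercivity, control the Poisson contribution $(\rho\nabla\Phi)_{tt}$ via \eqref{x3-phi} and $\int_0^T\|\rho_{tt}\|_{L^2}^2dt\le C$, apply Gronwall, and then recover $\sigma\|\nabla u_t\|_{H^1}$ and $\sigma\|\nabla u\|_{W^{2,q}}$ from the elliptic estimates \eqref{x5b4} and \eqref{x5b10} exactly as in \eqref{x6b6}--\eqref{x6b7}. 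The only cosmetic difference is that you invoke the mass-weighted Poincar\'e inequality for the unweighted occurrences of $u_{tt}$, whereas the paper absorbs $\|u_{tt}\|_{L^2}$ and $\|u_{tt}\|_{L^6}$ directly through Lemma \ref{lem-vn}; both are valid.
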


\begin{proof} Differentiating $\eqref{NSP-2}$ with respect to $t$ twice, multiplying it by $u_{tt}$, and integrating over $\Omega$ lead to
\begin{align}\label{x6b2}
&\quad \frac{d}{dt}\int \frac{\rho}{2}|u_{tt}|^2dx+(\lambda+2\mu)\int(\div u_{tt})^2dx+\mu\int|\omega_{tt}|^2dx \nonumber \\
&=-4\!\int\n u^i_{tt} u\!\cdot\!\na\!  u^i_{tt} dx\!-\!\int_{ }(\n u)_t\!\cdot \left[\na (u_t\!\cdot\! u_{tt})\!+\!\na u_t\!\cdot\! u_{tt}\right]dx\!-\!\int_{}(\n_{tt}u\!+\!2\n_tu_t)\!\cdot\!\na u\!\cdot\! u_{tt}dx \nonumber \\
&\quad -\!\int (\n u_{tt}\cdot\na u\cdot  u_{tt}\!-\!P_{tt}{\rm div}u_{tt})dx\!-\!\int (\rho_{tt}\nabla\Phi\!+\!2\rho_t\nabla\Phi_t\!+\!\rho\nabla\Phi_{tt})u_{tt}dx \nonumber \\
 & \triangleq\sum_{i=1}^5 R_i.
\end{align}
Let us estimate $R_i$ for $i=1,\cdots,5$. H\"{o}lder's inequality, \eqref{x1b1}, \eqref{x2b1}, \eqref{x3b}, \eqref{x4b} and \eqref{x4bb}, give
\begin{align}\label{x6r1}
\displaystyle  \sum_{i=1}^4 R_i \le &
 \de \|\na u_{tt}\|_{L^2}^2+C(\de)\|\sqrt{\rho}u_{tt}\|^2_{L^2}+C(\de)\|\nabla u_t\|_{L^2}^3+C(\de)\|\nabla u_t\|_{L^2}^2\nonumber \\
&+C(\de)\|\n_{tt}\|_{L^2}^2+C(\de)\|P_{tt}\|^2_{L^2},
\end{align}
By \eqref{x1b1}, \eqref{x3b}, \eqref{x4b} and \eqref{x4bb}, we conclude that
\begin{align}
R_5 &\le C\left(\|\n_{tt}\|_{L^2}\|\na \Phi\|_{L^3}+\|\n_{t}\|_{L^3}\|\na\Phi_{t}\|_{L^6}\right)\|u_{tt}\|_{L^6}+\|\nabla\Phi_{tt}\|_{L^2}\|u_{tt}\|_{L^2} \nonumber \\
&\le \de \|\na u_{tt}\|_{L^2}^2+C(\de)(\|\rho_{tt}\|^2_{L^2}+\|\nabla^2\Phi_{t}\|_{L^2}^2+\|\nabla \Phi_{tt}\|^2_{L^2}),\label{x6r5}
\end{align}
Substituting these estimates into \eqref{x6b2}, utilizing the fact that
\begin{align}\label{x6b3}
\displaystyle  \|\nabla u_{tt}\|_{L^2}\leq C(\|\div u_{tt}\|_{L^2}+\|\omega_{tt}\|_{L^2}),
\end{align}
due to Lemma \ref{lem-vn} since $u_{tt}\cdot n=0,$ on $\partial\Omega,$ and then choosing $\de$ small enough, we can get
\begin{align}\label{x6b4}
&\frac{d}{dt}\|\sqrt{\rho}u_{tt}\|^2_{L^2}+\|\na u_{tt}\|_{L^2}^2\nonumber \\
&\le C (\|\sqrt{\rho}u_{tt}\|^2_{L^2}+\|\rho_{tt}\|^2_{L^2}+\|P_{tt}\|^2_{L^2}+\|\nabla u_{t}\|^3_{L^2}+\|\nabla u_{t}\|^2_{L^2}+C),
\end{align}
which together with \eqref{x4b}, \eqref{x4bb}, and by Gronwall's inequality yields that
\begin{align}\label{x6b5}
\sup_{0\leq t\leq T}\sigma^2\|\sqrt{\rho}u_{tt}\|^2_{L^2}+\int_0^T\sigma^2\|\na u_{tt}\|_{L^2}^2dt \leq C.
\end{align}
Furthermore, it follows from \eqref{3tdu}, \eqref{x5b4} and \eqref{x4bb} that
\begin{align}\label{x6b6}
\displaystyle &\quad \sup_{0\le t\le T}\sigma\|\nabla^2 u_t\|_{L^2}\leq C \sigma(1+\|\sqrt{\rho}u_{tt}\|_{L^2}+\|\nabla u_t\|_{L^2}) \leq C.
\end{align}
Finally, we deduce from \eqref{x4bb}, \eqref{x5b}, \eqref{x5bb}, \eqref{x5b7}, \eqref{x5b10}, \eqref{x6b5} and \eqref{x6b6} that
\begin{align}\label{x6b7}
\displaystyle \sigma\|\na^2 u\|_{W^{1,q}}
& \le C\sigma (1+\|\na  u_t\|_{L^2}+\| \na(\n
\dot u )\|_{L^{q}}+\|\na^2  P\|_{L^{q}})\nonumber \\
& \le C(1+ \sigma\|\na u\|_{H^2}+\sigma^{\frac{1}{2}}(\sigma\|\na u_t\|_{H^1}^2)^{\frac{3(q-2)}{4q}})\nonumber \\
&\le C+C\sigma^{\frac{1}{2}}(\sigma^{-1})^{\frac{3(q-2)}{4q}} \le C ,
\end{align}
together with \eqref{x6b5} and \eqref{x6b6} yields \eqref{x6b} and this completes the proof of Lemma \ref{lem-x6}.
\end{proof}

\section{Proof of  Theorem  \ref{th1}}\label{se5}

With all the a priori estimates in Section \ref{se3} at hand, we are going to  prove the main result of the paper in this section.

{\it Proof of Theorem \ref{th1}.}
By Lemma \ref{lem-local}, there exists a
$T_*>0$ such that the  system \eqref{NSP}-\eqref{boundary} has a unique classical solution $(\rho,u,\Phi)$ on $\Omega\times
(0,T_*]$. One may use the a priori estimates, Proposition \ref{pr1} and Lemmas \ref{lem-x3}-\ref{lem-x6} to extend the classical
solution $(\rho,u,\Phi)$ globally in time.

First, by the definition of \eqref{As1}, the assumption of the initial data \eqref{dt2} and \eqref{ba35}, one immediately checks that
\begin{align}\label{pf1}
\displaystyle  0\leq\rho_0\leq \hat{\rho},\,\, A_1(0)=0, \,\,  A_2(0)\leq C_0^{\delta_0}.
\end{align}
Therefore, there exists a $T_1\in(0,T_*]$ such that
\begin{equation}\label{pf2}
0\leq\rho_0\leq2\hat{\rho}, \,\, A_1(T)\leq 2C_0^{\frac{1}{2}}, \,\, A_2(\sigma(T))\leq 2C_0^{\delta_0},
\end{equation}
hold for $T=T_1.$
Next, we set
\begin{align}\label{pf3}
\displaystyle  T^*=\sup\{T\,|\,{\rm \eqref{pf2} \ holds}\}.
\end{align}
Then $T^*\geq T_1>0$. Hence, for any $0<\tau<T\leq T^*$
with $T$ finite, it follows from Lemmas \ref{lem-x1}-\ref{lem-x6}
that
\begin{equation}\label{pf4}
\begin{cases}
(\rho-\rho_s, \nabla(\Phi-\Phi_s) \in C([0,T]; H^2 \cap W^{2,q}), %\quad \nabla(\Phi-\Phi_s) \in C([0,T]; H^2 \cap W^{2,q}),
\\
\nabla u \in C([\tau ,T]; H^1), \quad \nabla u_t\in C([\tau ,T]; L^q);
\end{cases}	
\end{equation}
where one has taken advantage of the standard embedding
$$L^\infty(\tau ,T;H^1)\cap H^1(\tau ,T;H^{-1})\hookrightarrow
C\left([\tau ,T];L^q\right),\quad\mbox{ for any } q\in [2,6).  $$
Due to \eqref{x3b}, \eqref{x4bb}, \eqref{x6b} and $\eqref{NSP}_1$,
we obtain
\begin{align*}
&\quad\int_{\tau}^T \left|\left(\int\n|u_t|^2dx\right)_t\right|dt %\\&
\le\int_{\tau}^T\left(\|  \n_t  |u_t|^2 \|_{L^1}+2\|  \n  u_t\cdot u_{tt} \|_{L^1}\right)dt\\
%&\le C\int_{\tau}^T \left( \| \n|\div u||u_t|^2 \|_{L^2}+\|  |u||\na \n| |u_t|^2 \|_{L^1}+ \|\sqrt{\rho}  u_t \|_{L^2}\|\sqrt{\rho}u_{tt} \|_{L^2}\right)dt\\
&\le C\int_{\tau}^T\left( \| \sqrt{\rho} |u_t|^2 \|_{L^2}\|\na u\|_{L^\infty}+\|  u\|_{L^6}\|\na\n\|_{L^2} \|u_t  \|^2_{L^6}+  \|\sqrt{\rho}u_{tt} \|_{L^2}\right)dt%\\&
\le C,
\end{align*}
which together with \eqref{pf4} yields
\begin{align}\label{pf5}
\displaystyle  \sqrt{\rho}u_t, \quad\sqrt{\rho}\dot u \in C([\tau,T];L^2).
\end{align}
Finally, we claim that
\begin{align}\label{pf6}
 \displaystyle  T^*=\infty.
 \end{align}
Otherwise, $T^*<\infty$. Then by Proposition \ref{pr1}, it holds that
\begin{equation}\label{pf7}
0\leq\rho\leq\frac{7}{4}\hat{\rho},\,\,A_1(T^*)\leq C_0^{\frac{1}{2}},\,\, A_2(\sigma(T^*))\leq C_0^{\delta_0},	
\end{equation}
It follows from Lemmas \ref{lem-x5}, \ref{lem-x6} and \eqref{pf5} that $(\rho(x,T^*),u(x,T^*), H(x,T^*))$ satisfies the initial data condition \eqref{dt1}  and \eqref{dt3}, where  $g(x)\triangleq\sqrt{\rho}\dot u(x, T^*),\,\,x\in \Omega.$
Thus, Lemma \ref{lem-local} implies that there exists some $T^{**}>T^*$ such that \eqref{pf2} holds for $T=T^{**}$, which contradicts the definition of $ T^*.$ As a result, \eqref{pf6} holds.
By Lemmas \ref{lem-local} and \ref{lem-x1}-\ref{lem-x6}, it indicates that $(\rho,u,\Phi)$ is in fact the unique classical solution defined on $\Omega\times(0,T]$ for any  $0<T<T^*=\infty.$ The proof of Theorem \ref{th1} is finished.
 \endproof

%{\bf Conflict of Interest:} The authors declare that they have no conflict of interest.

%%      ---------------------------------------------------------------------
%%      ------------------------- APPENDIX (OPTIONAL) -----------------------
%%      ---------------------------------------------------------------------

%%      If you have one appendix, uncomment the line \appendix and add
%%      a \section{ *** APPENDIX TITLE ***}. If you have more than
%%      one, uncomment the line \appendices and add a \section{ ***
%%      APPENDIX TITLE ***} command for each appendix title.

%%      Type body of appendix/-ices here.
\appendix
\section{Some basic theories and lemmas}\label{appendix-a}
In this appendix, we review some elementary inequalities and important lemmas that are used extensively in this paper.

First, we recall the well-known Gagliardo-Nirenberg inequality (see \cite{Nir1959}).
\begin{lemma}[Gagliardo-Nirenberg]\label{lem-gn}
Assume that $\Omega$ is a bounded Lipschitz domain in $\r^3$. For  $p\in [2,6],\,q\in(1,\infty), $ and
$ r\in  (3,\infty),$ there exist two generic
 constants
$C_1,\,\,C_2>0$ which may depend  on $p$, $q$ and $r$ such that for any  $f\in H^1({\O }) $
and $g\in  L^q(\O )\cap D^{1,r}(\O), $
\be\label{g1}\|f\|_{L^p(\O)}\le C_1 \|f\|_{L^2}^{\frac{6-p}{2p}}\|\na
f\|_{L^2}^{\frac{3p-6}{2p}}+C_2\|f\|_{L^2} ,\ee
\be\label{g2}\|g\|_{C\left(\ol{\O }\right)} \le C_1
\|g\|_{L^q}^{q(r-3)/(3r+q(r-3))}\|\na g\|_{L^r}^{3r/(3r+q(r-3))} + C_2\|g\|_{L^2}.
\ee
Moreover, if $f\cdot n|_{\partial\Omega}=0$, $g\cdot n|_{\partial\Omega}=0$, then the constant $C_2=0.$
%Moreover, if either $f\cdot n|_{\partial\Omega}=0$ or $\int_\Omega fdx=0$, either $g\cdot n|_{\partial\Omega}=0$ or $\int_\Omega gdx=0$, then the constant $C_2=0.$
\end{lemma}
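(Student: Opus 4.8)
The plan is to deduce both \eqref{g1} and \eqref{g2} from the classical Gagliardo--Nirenberg interpolation inequalities on $\r^3$ by means of a Sobolev extension operator, and then to split off the top--order term with Young's inequality. Since $\Omega$ is a bounded Lipschitz domain, fix a bounded extension operator $E\colon W^{1,s}(\Omega)\to W^{1,s}(\r^3)$ with $\|Eh\|_{L^s(\r^3)}\le C\|h\|_{L^s(\Omega)}$ and $\|\nabla Eh\|_{L^s(\r^3)}\le C\|h\|_{W^{1,s}(\Omega)}$ (equivalently, invoke the Gagliardo--Nirenberg inequality directly on bounded Lipschitz domains). For \eqref{g1}, apply $\|v\|_{L^p(\r^3)}\le C\|v\|_{L^2(\r^3)}^{1-\theta}\|\nabla v\|_{L^2(\r^3)}^{\theta}$ to $v=Ef$; the exponent is forced by scaling, $-\tfrac3p=-\tfrac32(1-\theta)-\tfrac12\theta$, i.e. $\theta=\tfrac32-\tfrac3p=\tfrac{3p-6}{2p}\in[0,1]$ for $p\in[2,6]$. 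Restricting to $\Omega$ and using $\|\nabla Ef\|_{L^2(\r^3)}\le C(\|f\|_{L^2}+\|\nabla f\|_{L^2})$ gives $\|f\|_{L^p}\le C\|f\|_{L^2}^{1-\theta}(\|f\|_{L^2}+\|\nabla f\|_{L^2})^{\theta}$, and the elementary bound $(a+b)^{\theta}\le a^{\theta}+b^{\theta}$ ($a,b\ge0$, $0\le\theta\le1$) yields precisely \eqref{g1}.

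For \eqref{g2} I would repeat the argument with $v=Eg$ and the whole--space inequality $\|v\|_{L^\infty(\r^3)}\le C\|v\|_{L^q(\r^3)}^{1-\theta}\|\nabla v\|_{L^r(\r^3)}^{\theta}$ (valid because $r>3$, for any $q\in(1,\infty)$), where scaling forces $(1-\theta)(-\tfrac3q)+\theta(1-\tfrac3r)=0$, i.e. $\theta=\tfrac{3r}{3r+q(r-3)}$ and $1-\theta=\tfrac{q(r-3)}{3r+q(r-3)}$. Restriction together with $\|\nabla Eg\|_{L^r(\r^3)}\le C(\|g\|_{L^r}+\|\nabla g\|_{L^r})$ and $(a+b)^\theta\le a^\theta+b^\theta$ produces $\|g\|_{C(\bar\Omega)}\le C\|g\|_{L^q}^{1-\theta}\|\nabla g\|_{L^r}^{\theta}+C\|g\|_{L^q}$; to replace the residual $\|g\|_{L^q}$ by $\|g\|_{L^2}$ as stated I would treat $q\le2$ (where $\|g\|_{L^q}\le C\|g\|_{L^2}$ on the bounded domain) and $q>2$ (where $\|g\|_{L^q}\le\|g\|_{L^2}^{2/q}\|g\|_{L^\infty}^{1-2/q}$, the $\|g\|_{L^\infty}$ factor being absorbed into the left side by Young's inequality) separately.

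It remains to explain the last assertion, that $C_2=0$ when the normal trace of $f$ (resp. $g$) vanishes on $\partial\Omega$. The extra input is a Poincar\'e--type inequality: for a field $h$ on the bounded domain $\Omega$ with $h\cdot n|_{\partial\Omega}=0$ one has $\|h\|_{L^s}\le C\|\nabla h\|_{L^s}$. I would prove this by compactness: otherwise there is a sequence with $\|h_k\|_{L^s}=1$ and $\|\nabla h_k\|_{L^s}\to0$, which by Rellich's theorem converges in $L^s$ to a constant vector $c$ with $|c|=1$ and $c\cdot n\equiv0$ on $\partial\Omega$, which is impossible since the boundary of a bounded domain is not contained in any hyperplane (the kernel stays trivial even if $\Omega$ is multiply connected, as it consists only of boundary--tangent constant fields). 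Granting this, $\|h\|_{W^{1,s}}\le C\|\nabla h\|_{L^s}$, and using this bound in place of $\|h\|_{W^{1,s}}\le C(\|h\|_{L^s}+\|\nabla h\|_{L^s})$ in the two arguments above removes every additive lower--order term, giving \eqref{g1} and \eqref{g2} with $C_2=0$. I expect this Poincar\'e step to be the only non--mechanical point: one must be sure it is available for all the exponents $s$ that occur (here $s=2$ for \eqref{g1} and $s=r$ for \eqref{g2}), and that inserting it is compatible with the interpolation estimate --- which it is, because that estimate only uses the full $W^{1,s}$--norm on the right. Everything else (the extension operator, the $\r^3$ Gagliardo--Nirenberg inequalities, the exponent bookkeeping, and the $(a+b)^\theta$ and Young manipulations) is routine.
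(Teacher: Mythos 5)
Your argument is correct, but note that the paper does not prove this lemma at all: it is quoted as a classical fact with a citation to Nirenberg, so you are supplying a proof where the authors supply none. Your route (Stein extension to $\r^3$, the whole-space Gagliardo--Nirenberg inequalities with the scaling-determined exponents $\theta=\tfrac{3p-6}{2p}$ and $\theta=\tfrac{3r}{3r+q(r-3)}$, the splitting $(a+b)^\theta\le a^\theta+b^\theta$, and the $L^q\!\to\!L^2$ reduction of the residual term) is the standard derivation on Lipschitz domains and is sound; the $C_2=0$ clause via a Poincar\'e inequality for fields with vanishing normal trace is also the right mechanism, and is consistent with the machinery the paper itself uses (Lemma \ref{lem-vn} gives $\|v\|_{W^{1,q}}\le C\|\nabla v\|_{L^q}$-type control for such fields). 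The only spot where your justification is loose is the claim that a nonzero constant vector $c$ cannot satisfy $c\cdot n\equiv 0$ on $\partial\Omega$ "because the boundary is not contained in a hyperplane"; tangency of the boundary to a fixed direction does not literally mean containment in a hyperplane. A clean replacement is the divergence-theorem identity
\begin{equation*}
\int_{\partial\Omega}(c\cdot n)(c\cdot x)\,dS=\int_{\Omega}\operatorname{div}\bigl((c\cdot x)\,c\bigr)dx=|c|^2|\Omega|,
\end{equation*}
which forces $c=0$ whenever $c\cdot n=0$ a.e.\ on $\partial\Omega$. With that repair, and the routine check that $L^q(\Omega)\cap D^{1,r}(\Omega)\subset W^{1,r}(\Omega)$ on a bounded domain so that the extension and trace steps apply, your proof is complete.
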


% The following trace theorem can be found in \cite{GPG}.
% \begin{lemma}[Trace Theorem]\label{lem-trace}
% Let $\Omega$ be a bounded domain in $\r^n$ with  Lipschitz boundary.
%  For $u\in W^{k,p}(\Omega)$, $ 1\leq p <\infty$. Assume that
% \bnn\begin{cases}
% r\in [p,\frac{(n-1)p}{n-kp}]~~\,\, &if ~kp<n;\\ r\in [p,+\infty]~~\,\,&if~kp\geq n,
% \end{cases}\enn
% then $u\in L^r(\partial\Omega)$ and for any $\delta>0$, there exists a positive constant $C=C(n,r,p,\Omega,\delta)$ such that
% $$\|u\|_{L^r(\partial\Omega)}\leq \delta\|\nabla u\|_{L^p(\Omega)}+C(\delta)\|u\|_{L^p(\Omega)}.$$
% \end{lemma}

In order to get the uniform (in time) upper bound of the density $\n,$ we need the following Zlotnik  inequality in \cite{zlo2000}.
\begin{lemma}\label{lem-z}
Suppose the function $y$ satisfy
\bnn y'(t)= g(y)+b'(t) \mbox{  on  } [0,T] ,\quad y(0)=y^0, \enn
with $ g\in C(R)$ and $y, b\in W^{1,1}(0,T).$ If $g(\infty)=-\infty$
and \be\label{a100} b(t_2) -b(t_1) \le N_0 +N_1(t_2-t_1)\ee for all
$0\le t_1<t_2\le T$
  with some $N_0\ge 0$ and $N_1\ge 0,$ then
\bnn y(t)\le \max\left\{y^0,\overline{\zeta} \right\}+N_0<\infty
\mbox{ on
 } [0,T],
\enn
where $\overline{\zeta} $ is a constant such
that \be\label{a101} g(\zeta)\le -N_1 \quad\mbox{ for }\quad \zeta\ge \overline{\zeta}.\ee
\end{lemma}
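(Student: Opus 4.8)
The plan is to argue by contradiction, exploiting that $g(\infty)=-\infty$ forces a strong decay of $y$ whenever $y$ is large. I first note that $g(\infty)=-\infty$ indeed guarantees a constant $\overline{\zeta}$ with \eqref{a101}, and that $y,b\in W^{1,1}(0,T)$ are absolutely continuous on $[0,T]$, so the fundamental theorem of calculus applies to each and the ODE $y'=g(y)+b'$ holds a.e. Set $M\triangleq\max\{y^0,\overline{\zeta}\}+N_0$; the goal is to prove $y(t)\le M$ for all $t\in[0,T]$.

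The core estimate is a one-sided bound valid on any interval where $y$ stays above the threshold: if $0\le t_1<t_2\le T$ and $y(s)\ge\overline{\zeta}$ for every $s\in[t_1,t_2]$, then $g(y(s))\le -N_1$ throughout, hence
\[
y(t_2)-y(t_1)=\int_{t_1}^{t_2}\big(g(y(s))+b'(s)\big)\,ds\le -N_1(t_2-t_1)+b(t_2)-b(t_1)\le N_0 ,
\]
where the last inequality uses \eqref{a100}. Thus on any window where $y\ge\overline{\zeta}$, its net increase is at most $N_0$.

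Now fix $t\in[0,T]$ and suppose, towards a contradiction, that $y(t)>M$; since $N_0\ge0$ we have $M\ge\overline{\zeta}$, so $y(t)>\overline{\zeta}$. If $y(s)\ge\overline{\zeta}$ for all $s\in[0,t]$, the core estimate with $t_1=0,\,t_2=t$ gives $y(t)\le y^0+N_0\le M$, a contradiction. Otherwise $A\triangleq\{s\in[0,t]:y(s)<\overline{\zeta}\}$ is nonempty; put $t_1\triangleq\sup A$. By continuity of $y$ and $y(t)>\overline{\zeta}$, the value of $y$ exceeds $\overline{\zeta}$ near $t$, so $t_1<t$ and $y(s)\ge\overline{\zeta}$ for all $s\in(t_1,t]$. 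Taking a sequence in $A$ tending to $t_1$ gives $y(t_1)\le\overline{\zeta}$, while letting $s\to t_1^+$ within $(t_1,t]$ gives $y(t_1)\ge\overline{\zeta}$; hence $y(t_1)=\overline{\zeta}$, and in particular $y\ge\overline{\zeta}$ on all of $[t_1,t]$. The core estimate on $[t_1,t]$ then yields $y(t)\le y(t_1)+N_0=\overline{\zeta}+N_0\le M$, once more a contradiction. Therefore $y\le M$ on $[0,T]$, as claimed.

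The argument is soft, and I do not anticipate a genuine obstacle; the one step requiring care is the "last exit time" above — one must not assume the supremum defining $t_1$ is attained, should observe that the case $t_1=0$ causes no trouble (indeed $y(t_1)=\overline{\zeta}$ is forced in every case by the two one-sided limits), and must invoke only continuity of $y$, guaranteed by $y\in W^{1,1}(0,T)$, rather than pointwise differentiability at $t_1$. With these checked, the integral identity together with the sublinear growth bound \eqref{a100} on $b$ closes the argument immediately.
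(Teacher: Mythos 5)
Your proof is correct. Note that the paper itself offers no proof of this lemma: it is quoted verbatim as a known result from Zlotnik's paper \cite{zlo2000}, so there is nothing in the text to compare against. Your argument --- the integral form of the ODE combined with the ``last exit time'' from the set $\{y<\overline{\zeta}\}$, using \eqref{a100} to absorb the drift of $b$ and $g(\zeta)\le -N_1$ for $\zeta\ge\overline{\zeta}$ to cancel the linear part --- is the standard proof of Zlotnik's inequality, and you handle the two delicate points correctly: working with the absolutely continuous representative of $y\in W^{1,1}$ rather than pointwise differentiability, and deriving $y(t_1)=\overline{\zeta}$ from the two one-sided limits without assuming the supremum is attained inside $A$.
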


% Consider the Lam\'{e}'s system
% \be\label{lame1}\begin{cases}
% -\mu\Delta u-(\lambda+\mu)\nabla\div u=f \,\, &in~ \Omega, \\
% u\cdot n=0\,\,\text{and}\,\,\curl u\times n=0\,\,&on\,\,\partial\Omega,
% \end{cases} \ee
% Then, the following estimate is standard (see \cite{adn1964}).
% \begin{lemma}  \label{lem-lame}
% For the Lam\'{e}'s equation \eqref{lame1}, one has

% (1) If $f\in W^{k,q}$ for some $q\in(1,\infty),\,\, k\geq0,$ then there exists a unique solution $u\in W^{k+2,q},$ such that
% \begin{equation}
% \displaystyle  \|u\|_{W^{k+2,q}}\leq C(\|f\|_{W^{k,q}}+\|u\|_{L^q});
% \end{equation}
% (2) If $f=\nabla g$ and $g\in W^{k,q}$ for some $q\geq1,\,\,k\geq0,$ then there exists a unique weak solution $u\in W^{k+1,q},$ such that
% \begin{equation}
% \displaystyle  \|u\|_{W^{k+1,q}}\leq C(\|g\|_{W^{k,q}}+\|u\|_{L^q}).
% \end{equation}
% \end{lemma}

The following two lemmas are given in %Theorem 3.2 in \cite{Von1992} and
Propositions 2.6-2.9 in \cite{Aramaki2014}.
\begin{lemma}   \label{lem-vn}
Let $k\geq0$ be a integer, $1<q<+\infty$, and assume that $\Omega$ is a simply connected bounded domain in $\r^3$ with $C^{k+1,1}$ boundary $\partial\Omega$. Then for $v\in W^{k+1,q}$ with $v\cdot n=0$ on $\partial\Omega$, it holds that
\begin{equation}
\displaystyle  \|v\|_{W^{k+1,q}}\leq C(\|\div v\|_{W^{k,q}}+\|\curl v\|_{W^{k,q}}).
\end{equation}
In particular, for $k=0$, we have
\begin{equation}\label{tdu1}
\displaystyle  \|\nabla v\|_{L^q}\leq C(\|\div v\|_{L^q}+\|\curl v\|_{L^q}).
\end{equation}
\end{lemma}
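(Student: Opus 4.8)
The inequality is a Hodge/div--curl elliptic estimate, and the plan is to reconstruct it from the solvability theory for the system $\div\bar v=f$, $\curl\bar v=g$, $\bar v\cdot n=0$; in compact form it is exactly Propositions 2.6--2.9 of \cite{Aramaki2014}, which one may also simply invoke. Concretely, I would fix the data $f:=\div v\in W^{k,q}$ and $g:=\curl v\in W^{k,q}$, construct a field $\bar v$ with $\div\bar v=f$, $\curl\bar v=g$, $\bar v\cdot n=0$ on $\partial\Omega$ and $\|\bar v\|_{W^{k+1,q}}\le C(\|f\|_{W^{k,q}}+\|g\|_{W^{k,q}})$, and then show $v=\bar v$. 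First I would record the compatibility relations that $f,g$ automatically satisfy: $\div g=\div\curl v=0$ in $\Omega$; $\int_\Omega f\,dx=\int_{\partial\Omega}v\cdot n\,dS=0$ since $v\cdot n=0$; and $\int_{\Gamma}g\cdot n\,dS=\int_{\Gamma}\curl v\cdot n\,dS=0$ for each connected component $\Gamma$ of $\partial\Omega$, by Stokes' theorem on the boundaryless surface $\Gamma$.

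Next I would split $\bar v$ into a gradient part carrying the divergence and a curl part carrying the vorticity. For the gradient part, solve the Neumann problem $\Delta\varphi=f$ in $\Omega$, $\partial_n\varphi=0$ on $\partial\Omega$, normalized by $\int_\Omega\varphi\,dx=0$; this is solvable by $\int_\Omega f=0$, and since $\partial\Omega\in C^{k+1,1}$ the standard $W^{k,q}$ Neumann estimate gives $\|\nabla\varphi\|_{W^{k+1,q}}\le C\|f\|_{W^{k,q}}$. Setting $z:=v-\nabla\varphi$ yields $\div z=0$, $\curl z=g$, $z\cdot n=0$. For $z$ I would use the vector-potential construction: since $z$ is divergence-free, tangential, and the fluxes $\int_\Gamma z\cdot n\,dS$ vanish, there is $\Psi$ with $z=\curl\Psi$, $\div\Psi=0$, $\Psi\times n=0$; then $\Psi$ solves $-\Delta\Psi=\curl z=g$ with these tangential boundary data, and the associated $W^{k,q}$ elliptic estimate — after normalizing $\Psi$ against the finite-dimensional kernel of harmonic fields — gives $\|\Psi\|_{W^{k+2,q}}\le C\|g\|_{W^{k,q}}$, hence $\|z\|_{W^{k+1,q}}=\|\curl\Psi\|_{W^{k+1,q}}\le C\|g\|_{W^{k,q}}$. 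Taking $\bar v:=\nabla\varphi+z$ combines the two bounds into the desired estimate.

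It then remains to identify $v$ with $\bar v$: the difference $h:=v-\bar v$ satisfies $\div h=0$, $\curl h=0$ in $\Omega$ and $h\cdot n=0$ on $\partial\Omega$; since $\Omega$ is simply connected, $\curl h=0$ forces $h=\nabla p$, and then $\div h=0$ gives $\Delta p=0$ while $h\cdot n=\partial_n p=0$ on $\partial\Omega$, so $p$ is constant and $h\equiv0$. Hence $v=\bar v$, giving $\|v\|_{W^{k+1,q}}\le C(\|\div v\|_{W^{k,q}}+\|\curl v\|_{W^{k,q}})$, and \eqref{tdu1} is the case $k=0$ together with $\|\nabla v\|_{L^q}\le\|v\|_{W^{1,q}}$. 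The main obstacle I foresee is the elliptic $W^{k,q}$ theory underlying the construction of $\bar v$ — in particular the vector-Laplacian estimate with tangential boundary conditions, identifying and projecting off the harmonic kernel, and tracking the constants uniformly in $k$ — which is precisely the technical content of the cited Propositions 2.6--2.9 in \cite{Aramaki2014} and may be quoted directly in place of the sketch above.
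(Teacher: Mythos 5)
The paper itself contains no proof of this lemma: it is imported wholesale from Propositions 2.6--2.9 of Aramaki \cite{Aramaki2014}, which is also the fallback you name, so at that level your route and the paper's coincide. Your reconstruction is nonetheless a correct, standard argument (Helmholtz splitting plus a vector potential), and the non-routine ingredients are correctly identified: solvability of the Neumann problem uses $\int_\Omega \div v\,dx=\int_{\partial\Omega}v\cdot n\,dS=0$; the existence of $\Psi$ with $\curl\Psi=z$, $\div\Psi=0$, $\Psi\times n=0$ uses exactly that $z$ is solenoidal with vanishing flux through each component of $\partial\Omega$ (automatic here since $z\cdot n=0$ pointwise); and the kernel of the vector boundary-value problem consists of curl-free harmonic fields, so normalizing $\Psi$ against it does not change $z=\curl\Psi$. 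Two small remarks. First, your construction is circular in a harmless way: since you set $z:=v-\nabla\varphi$, the field $\bar v:=\nabla\varphi+z$ equals $v$ by definition, so the closing uniqueness paragraph is redundant as written; its real content is the observation that for simply connected $\Omega$ the space of curl-free, divergence-free fields tangent to $\partial\Omega$ is trivial, which is precisely why the estimate holds with no $\|v\|_{L^q}$ term on the right (contrast Lemma~\ref{lem-curl}, where such a term is needed when $\Omega$ has holes). Second, there is no need to track constants ``uniformly in $k$'': the lemma is applied for each fixed $k$, and $C$ may depend on $k$, $q$ and $\Omega$. Granting the two elliptic estimates --- $W^{k+2,q}$ regularity for the scalar Neumann problem and for the vector Laplacian with $\Psi\times n=0$, $\div\Psi=0$ on a $C^{k+1,1}$ boundary --- the argument is complete; those estimates are exactly the technical content packaged in the cited propositions.
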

\begin{lemma}   \label{lem-curl}
Let $k\geq0$ be a integer, $1<q<+\infty$. Suppose that $\Omega$ is a bounded domain in $\r^3$ and its $C^{k+1,1}$ boundary $\partial\Omega$ only has a finite number of 2-dimensional connected components. Then for $v\in W^{k+1,q}$ with $v\times n=0$ on $\partial\Omega$, we have
\begin{equation}
\displaystyle \|v\|_{W^{k+1,q}}\leq C(\|\div v\|_{W^{k,q}}+\|\curl v\|_{W^{k,q}}+\|v\|_{L^q}).
\end{equation}
In particular, if  $\Omega$ has no holes, then
\begin{equation}
\displaystyle  \|v\|_{W^{k+1,q}}\leq C(\|\div v\|_{W^{k,q}}+\|\curl v\|_{W^{k,q}}).
\end{equation}
\end{lemma}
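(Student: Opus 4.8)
The plan is to derive the estimate from the $L^q$-regularity theory for second-order elliptic systems together with the Helmholtz decomposition --- in essence the argument of \cite{Aramaki2014}, from which the statement is quoted. The first step is to reduce to the divergence-free case. Given $v\in W^{k+1,q}$ with $v\times n=0$ on $\partial\Omega$, I would solve the Dirichlet problem $\Delta\pi=\div v$ in $\O$, $\pi=0$ on $\partial\Omega$; since $\div v\in W^{k,q}$ and $\partial\Omega\in C^{k+1,1}$, elliptic regularity gives $\pi\in W^{k+2,q}$ with $\|\na\pi\|_{W^{k+1,q}}\le C\|\div v\|_{W^{k,q}}$. Setting $w=v-\na\pi$, one has $\div w=0$ and $\curl w=\curl v$; moreover $\pi|_{\partial\Omega}=0$ forces the tangential gradient of $\pi$ to vanish on $\partial\Omega$, so $\na\pi$ is purely normal there and $\na\pi\times n=0$, whence $w\times n=0$ on $\partial\Omega$. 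It thus suffices to bound $\|w\|_{W^{k+1,q}}$ by $\|\curl v\|_{W^{k,q}}$ plus lower-order terms.

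Because $\div w=0$, one has the identity $-\Delta w=\curl(\curl w)=\curl(\curl v)$, so $w$ is the solution of $-\Delta w=\curl(\curl v)$ in $\O$ subject to $w\times n=0$ and $\div w=0$ on $\partial\Omega$. The key observation is that this pair of boundary operators --- two scalar conditions from $w\times n$ and one from $\div w$ --- forms a complementing (Agmon--Douglis--Nirenberg) boundary system for the Laplacian on a $C^{k+1,1}$ domain, so the ADN estimates apply. Crucially, the right-hand side appears in divergence form, being $\curl$ of the $W^{k,q}$-field $\curl v$; it is therefore controlled in $W^{k-1,q}$ (and in $W^{-1,q}$ when $k=0$, where one invokes the weak/negative-order form of the ADN estimate) by $\|\curl v\|_{W^{k,q}}$, yielding $\|w\|_{W^{k+1,q}}\le C(\|\curl v\|_{W^{k,q}}+\|w\|_{L^q})$. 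Combining this with the first step and the bound $\|w\|_{L^q}\le\|v\|_{L^q}+\|\na\pi\|_{L^q}\le\|v\|_{L^q}+C\|\div v\|_{W^{k,q}}$ gives the first inequality.

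For the refinement when $\O$ has no holes, I would remove the term $\|v\|_{L^q}$ by a Peetre--Lions compactness argument: the lower-order term is only needed to dominate the kernel of $v\mapsto(\div v,\curl v)$ under the constraint $v\times n=0$, namely the finite-dimensional space of fields with $\div v=\curl v=0$ in $\O$ and $v\times n=0$ on $\partial\Omega$. This ``harmonic'' space has dimension equal to the number of bounded connected components of $\r^3\setminus\overline{\O}$ (the holes), so it is trivial under the hypothesis; the compact embedding $W^{k+1,q}\hookrightarrow\hookrightarrow L^q$ then upgrades the estimate to the lower-order-free form. I expect the main obstacles to be the verification of the complementing condition for $\{\,\cdot\times n,\ \div\,\}$ and the bookkeeping that keeps only one derivative of $\curl v$ on the right-hand side rather than the two appearing in $\curl\curl v$; the identification of the harmonic-field space with the topology of $\O$ is the other delicate point.
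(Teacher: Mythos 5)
The paper does not actually prove this lemma: it is recalled verbatim from Propositions 2.6--2.9 of \cite{Aramaki2014}, so there is no internal argument to compare yours against. Your sketch is nevertheless a correct and standard route to the result, and it is essentially the same circle of ideas used in that reference and in the Amrouche--Seloula/von Wahl literature: peel off the gradient part by a Dirichlet problem (your observation that $\pi|_{\partial\Omega}=0$ forces $\na\pi\times n=0$, so the reduced field keeps the boundary condition, is exactly the right point), then treat the divergence-free remainder as a solution of the vector Laplacian with the boundary operators $\{\,\cdot\times n,\ \div\,\}$, which do satisfy the complementing condition. The two places you flag as delicate are indeed where the real work lies: (i) for $k=0$ the source term $\curl\curl v$ only lives in $W^{-1,q}$, so you need the transposition/very-weak form of the ADN estimate (or, equivalently, you can sidestep it by first constructing a divergence-free vector potential $\psi\in W^{k+1,q}$ with $\curl\psi=\curl v$ and comparing $w$ with $\psi$, which is how some of the cited sources proceed); and (ii) the identification of the kernel $\{\div v=\curl v=0,\ v\times n=0\}$ with gradients of harmonic functions that are constant on each boundary component, whence its dimension is the number of cavities --- this is the standard $K_N(\Omega)$ computation and justifies dropping $\|v\|_{L^q}$ by the Peetre--Lions compactness argument when $\O$ has no holes. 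With those two points filled in from the literature, the proof is complete.
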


Finally, similar to \cite{BKM1984,HLX2012}, we need a Beale-Kato-Majda type inequality with respect to the slip boundary condition \eqref{navier-b} which is given in \cite{cl2019}.
\begin{lemma}\label{lem-bkm}
For $3<q<\infty$, assume that $u\cdot n=0$ and $\curl u\times n=0$ on $\partial\Omega$, $ u\in W^{2,q}$, then there is a constant  $C=C(q,\Omega)$ such that  the following estimate holds
\begin{equation}
\displaystyle \|\na u\|_{L^\infty}\le C\left(\|{\rm div}u\|_{L^\infty}+\|\curl u\|_{L^\infty} \right)\ln(e+\|\na^2u\|_{L^q})+C\|\na u\|_{L^2} +C .
\end{equation}
\end{lemma}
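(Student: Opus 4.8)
The plan is to obtain this from a logarithmically refined form of the div--curl estimate of Lemma \ref{lem-vn}, by freezing a scale $N$ in a Calder\'on--Zygmund-type kernel, estimating the near-diagonal, intermediate and far parts separately, and then optimizing $N$ against $\|\na^2u\|_{L^q}$. Since $\Omega$ is simply connected with smooth boundary and $u\cdot n=0$ on $\pa\Omega$, the field $u$ is recovered from $\div u$, $\curl u$ and \eqref{navier-b} by solving $-\Delta u=-\na\div u+\na\times\curl u$ with the boundary conditions $u\cdot n=0$, $\curl u\times n=0$; the harmonic fields of this problem being trivial, there is a solution operator whose gradient has the form $\na u=\mathcal T[\div u]+\mathcal T'[\curl u]+r$, where $\mathcal T,\mathcal T'$ have kernels that are, up to a smoother remainder, homogeneous of degree $-3$, smooth off the diagonal and of vanishing mean over spheres, and where the remainder $r$ together with the smoother part of the kernel is estimated by $C\|(\div u,\curl u)\|_{L^2}\le C\|\na u\|_{L^2}$.

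The heart of the matter is then the pointwise bound, valid for any such kernel operator $\mathcal T$ and any $N\ge 1$,
\begin{equation*}
\|\mathcal T[g]\|_{L^\infty}\le C\,N^{-\alpha}\,[g]_{C^{0,\alpha}(\Omega)}+C(\ln N)\,\|g\|_{L^\infty}+C\|g\|_{L^2},\qquad \alpha=1-\tfrac3q\in(0,1),
\end{equation*}
which follows by splitting the kernel integral over $\{|x-y|<1/N\}$, $\{1/N\le|x-y|<1\}$ and $\{|x-y|\ge1\}$: on the first piece one uses the cancellation of $K$ over spheres together with $|g(x)-g(y)|\le[g]_{C^{0,\alpha}}|x-y|^{\alpha}$, on the second the bound $|K(x-y)|\le C|x-y|^{-3}$ (which produces the factor $\ln N$), and on the third that the tail of $K$ lies in $L^2$. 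Taking $g=\div u$ and $g=\curl u$, using the Morrey embedding $[g]_{C^{0,\alpha}(\Omega)}\le C\|\na g\|_{L^q}\le C\|\na^2u\|_{L^q}$ and $\|g\|_{L^2}\le C\|\na u\|_{L^2}$, and finally choosing $N$ comparable to $(e+\|\na^2u\|_{L^q})^{q/(q-3)}$ so that $N^{-\alpha}\|\na^2u\|_{L^q}\le 1$ while $\ln N\le C\ln(e+\|\na^2u\|_{L^q})$, we arrive at
\begin{equation*}
\|\na u\|_{L^\infty}\le C(\|\div u\|_{L^\infty}+\|\curl u\|_{L^\infty})\ln(e+\|\na^2u\|_{L^q})+C\|\na u\|_{L^2}+C,
\end{equation*}
which is exactly the claimed inequality.

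I expect the genuine work to be in justifying the representation $\na u=\mathcal T[\div u]+\mathcal T'[\curl u]+r$ with kernel bounds that hold uniformly up to $\pa\Omega$; this is where the specific form of the slip condition \eqref{navier-b} is used, following \cite{cl2019}. After localizing with a partition of unity and flattening $\pa\Omega$ to $\{x_3=0\}$ in each boundary chart, the conditions $u\cdot n=0$ and $\curl u\times n=0$ collapse, once $u^3|_{x_3=0}=0$ is used, to $u^3=0$, $\pa_3u^1=0$, $\pa_3u^2=0$ on $\{x_3=0\}$; consequently the even extension of $u^1,u^2$ together with the odd extension of $u^3$ is a $W^{2,q}$ field on the half-space whose divergence and curl are the even, resp.\ odd, extensions of $\div u,\curl u$, so their $L^\infty$ norms are unchanged, and the whole-space Hodge representation and the scale decomposition above apply to it; for interior patches no reflection is needed. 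The one remaining issue is to absorb the lower-order terms generated by the cutoffs and by the curvature of $\pa\Omega$; these are all controlled by $\|\na u\|_{L^2}$, by $(\|\div u\|_{L^\infty}+\|\curl u\|_{L^\infty})\ln(e+\|\na u\|_{L^q})$, or by $\|\na u\|_{L^q}$, and via Lemma \ref{lem-vn} together with the Sobolev embedding $W^{2,q}\hookrightarrow W^{1,\infty}$ ($q>3$) they are reabsorbed into $C\|\na u\|_{L^2}+C$ and $(\|\div u\|_{L^\infty}+\|\curl u\|_{L^\infty})\ln(e+\|\na^2u\|_{L^q})$, exactly as in \cite{cl2019}.
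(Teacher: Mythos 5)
The paper offers no proof of this lemma at all: it is quoted directly from \cite{cl2019}, so there is no in-paper argument to compare against. Your reconstruction --- the div--curl representation of $\nabla u$, localization with even/odd reflection across the flattened boundary (where $u\cdot n=0$ and $\curl u\times n=0$ reduce to $u^3=\partial_3u^1=\partial_3u^2=0$), the three-scale Kato splitting of the singular kernel, and the choice $N\sim(e+\|\nabla^2u\|_{L^q})^{q/(q-3)}$ --- is the standard and correct route, essentially the one followed in the cited reference.
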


%%      ---------------------------------------------------------------------
%%      ---------------------------ACKNOWLEDGMENTS (OPTIONAL) ---------------
%%      ---------------------------------------------------------------------
\section*{Acknowledgements} This research was partially supported by National Natural Sciences Foundation of China No. 11671027, 11901025, 11971020, 11971217.

%%      For each reference, provide the following information:

% \bibitem{ *** LABEL *** }             %% Give a reference label.
 %*Gray, M., Black, F., and White, A.*              %% Enter author(s) names.
% EXAMPLE:  Gray, M., Black, F., and White, A.

%%      Use the following template for a journal article:
% * Title of article *.                 %% Example: Existence and uniqueness.
% \textit{* Abbreviated journal name *} %% Example: \textit{Comm. Pure Appl. Math.}
% \textbf{* Volume number *}            %% Example: \textbf{72}
% (* Year of publication *),            %% Example: (1993),
% * Issue number [optional],            %% Example: no. 6,
% * Page range *.                       %% Example: 675--690.

%%      Use the following template for a book:
% \textit{* Title of book *}.           %% Example: \textit{Ancient Topology}.
% * Publisher *,                        %% Example: Wiley-Interscience,
% * City of publisher *,                %% Example: New York,
% * Year of publication *.              %% Example: 1993.

%%      ---------------------------------------------------------------------
%%      ------------------------ CONTACT INFORMATION ------------------------
%%      ---------------------------------------------------------------------

%      Place contact information for each author between
%      the \begin{comment} and \end{comment} commands. Include
%      preferred mailing address and e-mail addresses. Please note
%      that these will not print. We will format them for printing
%      during the editing stage.

\end{document}